\def\CC{\mathbb C}
\def\Chat{\hat {\mathbb C}}
\def\HH{\mathbb H}
\def\QQ{\mathbb Q}
\def\RR{\mathbb R}
\def\TT{\mathbb T}
\def\ZZ{\mathbb Z}
\def\a{\alpha}
\def\b{\beta}
\def\c{\gamma}
\def\d{\delta}
\def\g{\gamma}
\def\s{\sigma}
\def\th{\theta}
\def\w{\omega}
\def\z{\zeta}
\def\cal{\mathcal}
\def\B{{ \mathcal B}}
\def\C{{ \mathcal C}}
\def\D{ \mathcal D}
\def\F{{  \mathcal F}}
\def\H{{\cal H}}
\def\O{{\kappa}}
\def\S{{\mathcal S}}
\def\SCH {{\mathcal{SCH}}}
\def \Diag  {{\Delta}}
\def\P{{\cal P}}
\def\T{{\cal T}}
\def\U{{\mathcal U}}
  \def\arg{\mathop{\rm{Arg}}} 
   \def\Ax{\mathop{\rm{Ax}}}
    \def\Int{\mathop{{Int}}}
  \def\SL{SL(2,\mathbb C)}
  \def\PSL{PSL(2,\mathbb C)}
 \def\tr{\mathop{\rm{Tr}}}  
  \def\Tr{\mathop{\rm{Tr}}}
\def\dd{\partial}
\def\square{\hfill${\vcenter{\vbox{\hrule height.4pt \hbox{\vrule width.4pt
height7pt \kern7pt \vrule width.4pt} \hrule height.4pt}}}$}
\def\co{{\colon \thinspace}}
\newtheorem{theorem}{Theorem}[section]
\newtheorem{definition}[theorem]{Definition}
\newtheorem{lemma}[theorem]{Lemma}
\newtheorem{proposition}[theorem]{Proposition}
\newtheorem{prop}[theorem]{Proposition}
\newtheorem{corollary}[theorem]{Corollary}
\newtheorem{remark}[theorem]{Remark}
\begin{document}
\title{The diagonal slice of Schottky space}

\author{Caroline Series}
\address{\begin{flushleft} \rm {\texttt{C.M.Series@warwick.ac.uk \\http://www.maths.warwick.ac.uk/$\sim$masbb/} }\\ Mathematics Institute, 
 University of Warwick \\
Coventry CV4 7AL, UK \end{flushleft}}

\author{Ser Peow Tan} 
\address{\begin{flushleft} \rm {\texttt{mattansp@nus.edu.sg \\http://www.math.nus.edu.sg/$\sim$mattansp/} }\\ Department of Mathematics, National University of Singapore, 
10, Lower Kent Ridge Road
Singapore 119076
\end{flushleft}}

\author{Yasushi Yamashita} 
\address{\begin{flushleft} \rm {\texttt{yamasita@ics.nara-wu.ac.jp \\http://vivaldi.ics.nara-wu.ac.jp/$\sim$yamasita/}}\\ Department
of Information and Computer Sciences, Nara Women's University,
630-8506
Kitauoyanishi-machi, Nara-City, Japan \end{flushleft}}

\thanks{The second author is partially supported by the National University
of Singapore academic research grant R-146-000-186-112. The third author is partially supported by JSPS KAKENHI Grant Number 23540088.}   

\date{\today}

\begin{abstract} An irreducible representation of the free group on two generators $X,Y $ into $SL(2,\CC)$ is determined up to conjugation by the traces of $X,Y$ and $XY$.  If the representation is free and discrete, the resulting manifold is in general a genus-$2$ handlebody.
We study the diagonal slice of the representation variety in which 
$\tr X = \tr Y = \tr XY $. Using the symmetry, we are able to compute the Keen-Series pleating rays and thus fully determine the locus of free and discrete groups.  
We  also computationally determine the `Bowditch set' 
consisting of those parameter values for which no primitive elements in $\langle X,Y \rangle$ have traces in $[-2,2]$, and at most finitely many primitive elements have traces with absolute value at most $2$. The graphics make clear that this set is both strictly larger than, and significantly different from,  the discreteness locus.

 {\bf MSC classification: 30F40; 57M50}    

\end{abstract}

\maketitle

\section{Introduction} \label{intro}

It is well known that an  irreducible representation of the free group $F_2$ on two generators $X,Y $ into $SL(2,\CC)$ is determined up to conjugation by the traces of $X,Y$ and $XY$. More generally, if we take the GIT quotient of   all (not necessarily irreducible) representations, then the resulting $SL(2,\CC)$ character variety of $F_2$ can be identified with $\CC^3$ via these traces, see for example \cite{goldman2} and the references therein.   If the representation is free, discrete, purely loxodromic and geometrically finite, the resulting manifold is a genus-$2$ handlebody.  The collection of all such representations is known as \emph{Schottky space}, denoted  $\mathcal {SCH}$. It is a consequence of Bers' density theorem that $\mathcal {SCH}$ is the interior of the discreteness locus, see for example~\cite{canary}. It is natural to ask, for which values of   $x = \Tr X, y = \Tr Y, z= \Tr XY$  is the corresponding  representation in  $\mathcal {SCH}$?

Let $\P$ denote the set of primitive elements in $F_2$. For $(x,y,z) \in \CC^3$, let $\rho_{(x,y,z)}$ denote a choice of representation $  F_2 \to \SL$ in the conjugacy class determined by the trace triple. 
The \emph{Bowditch set} (or $BQ$-set) $\mathcal B$  is defined in ~\cite{tan_gen}  as the set of $(x,y,z) \in \CC^3$ corresponding to irreducible representations for which 
\begin{equation*} 
 \begin{split}  & \Tr \rho_{(x,y,z)}(g) \notin [-2,2]  \ \ \forall g \in \P    \ \ \mbox {\rm and}   \ \ \cr
& \{ g \in \P: |\Tr \rho_{(x,y,z)}(g)| \leq 2 \} \ \mbox {\rm is finite}.\end{split}\end{equation*}
(The exceptional case in which 
$\Tr [X,Y] = 2$ corresponds to reducible representations and is excluded from the discussion, see Remark~\ref{reducible}.)
The Bowditch set is open and ${\rm Out}(F_2)$ acts properly discontinuously on it. Clearly $\SCH \subset \mathcal B$.

Bowditch's original work~\cite{bow_mar} was on the case in which  the commutator $[X,Y] = XYX^{-1}Y^{-1}$
is parabolic and $\Tr  [X,Y] =  -2 $. He conjectured that the subsets of  $\SCH$ and $ \B$ corresponding to this restriction  coincide.
Although this has not been proven, computer pictures  indicate his conjecture may well be true.

In this paper we restrict to the special case in which $x = y = z$, which we call the \emph{diagonal slice} of the character variety, denoted $  \Diag$ and parametrized by the single complex variable $x$.  We show that in this slice, the analogue of Bowditch's conjecture is far from being true. This is illustrated in Figure~\ref{Diagonal-and-Riley-mu-3-Ray-BQ} which compares the  intersections of $\Delta$ with $\SCH$ and $\B$.
The discreteness locus is the outer region foliated by rays; these are the Keen-Series pleating rays which relate to the geometry of the convex hull boundary as explained in Section~\ref{sec:pleating} and whose closure is known to be $\overline{ \Diag \cap \SCH}$, see Theorem~\ref{thm:density}.
The Bowditch set, by contrast, is the complement of the black part.
It  is clear   that $\B \cap \Diag$ contains a large open region not in   $ \Diag \cap \SCH$, and also has different symmetries.  
In particular,   it is not hard to show that  the interval $  (2,3)$ is contained in $   \B \setminus \SCH$, see the discussion  in Section~\ref{sec:algorithm1}.

The main content of this paper is an explanation and justification of  how these plots were made, in particular to explain how we enumerated and computed the pleating rays for the symmetric genus $2$ handlebody corresponding to the trace triple $(x,x,x)$.

\begin{figure}[hbt]\includegraphics[width=6cm]{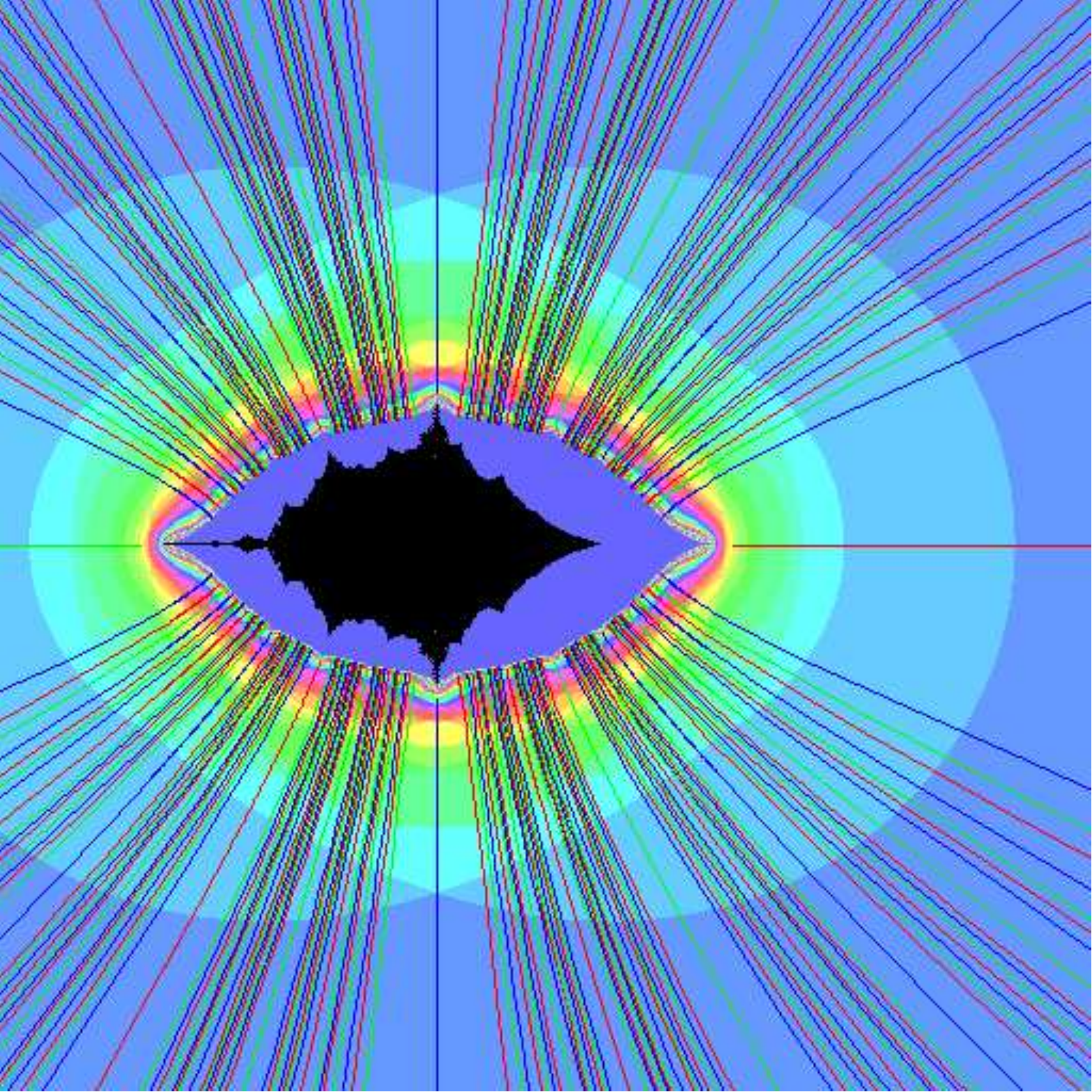}
\caption{Superposition of the discreteness locus for $\pi_1(\H)$ and the Bowditch set in the $x$-plane. The Bowditch set for the $(x,x,x)$-triple is the complement of the central black region, while the discreteness locus is the closure of the region foliated by rays. The rays  are actually computed as the pleating rays for the quotient orbifold $\S$.}\label{Diagonal-and-Riley-mu-3-Ray-BQ}
\end{figure}

To compute the Bowditch set $\B$ we use an algorithm based on the ideas in~\cite{bow_mar} and developed further  in~\cite{tan_gen}. This is explained 
in Section~\ref{sec:algorithm}.  

The discreteness problem is tackled as follows. If $(x,x,x) \in \SCH$ then the quotient $3$-manifold $\HH^3/G$ is a handlebody $\H$ with order $3$ symmetry. We use the symmetry to  reduce  the problem of finding $ \Diag \cap \SCH$ to a problem very similar to that of determining the so-called \emph{Riley slice  of Schottky space}. This is actually a space of groups on the boundary of $ \SCH$, consisting of those free, discrete and geometrically finite groups for which  the two  generators $X,Y$ are parabolic, thus contained in the slice $(2,2,z) \subset \CC^3$. The corresponding  manifold is a handlebody whose conformal boundary is a sphere with four parabolic points.   The problem of finding those $z$-values for which such a group is free, discrete and geometrically finite was  solved using the method of pleating rays in~\cite{ksriley}.  In the present case, the quotient of $\H$ by the symmetry is an orbifold $\S$ with two order $3$ cone axes, whose conformal boundary is a sphere with four order $3$ cone points. Thus similar methods enable us to find   $ \Diag \cap \SCH$ here.

Although Figure~\ref{Diagonal-and-Riley-mu-3-Ray-BQ} shows that in $\Delta$, the analogue of Bowditch's conjecture fails since $\B$  and the interior of the discreteness locus are plainly distinct, in many other  slices, see for example Figure~\ref{Figs/Riley-Ray-BQ},  the (modified) Bowditch set  and the interior of the discreteness locus appear to coincide. This  is connected to the dynamics of the action of a suitable mapping class group on representations and raises many interesting questions which we hope to address elsewhere.

The plan of the paper is as follows.  
We begin in Section~\ref{sec:markoff} with a discussion of the Markoff tree and the algorithm used to compute the Bowditch set.
In Section~\ref{sec:basicconfig} we introduce a basic geometrical construction  which conveniently 
encapsulates the $3$-fold symmetry. The quotient of the original   handlebody  $\H$ by the symmetry is a ball with two order $3$ cone axes. This orbifold $\S$ has a further $4$-fold symmetry group whose quotient is again a topological ball.  Our construction allows us to write down specific $\SL$ representations of all the groups involved with ease.
In Section~\ref{sec:discrete} we turn to the discreteness question. After reducing the problem to one on $\S$, 
we briefly review material from the Keen-Series theory of pleating rays and
recall  what is needed from~\cite{ksriley}, allowing us to
apply a similar proof in the present context.  Section~\ref{sec:torustree}, not strictly logically necessary for our development, explains how we did our trace computations in practice, by relating the problem to one on a commensurable torus with a single cone point of angle $4\pi/3$.

\section{The Markoff tree and the Bowditch set} \label{sec:markoff}
Let $A =  \begin{pmatrix} a & b \cr c & d\end{pmatrix} \in \SL$ so that $ad-bc= 1$. As usual we define its trace $\Tr A = a+d$.

Let $F_2 = \langle X,Y|  \  \rangle$ be the free group on two generators.
It is well known that a representation $\rho\co F_2 \to \SL$ is determined up to conjugation (modulo taking the GIT quotient under the conjugation action, see~\cite{goldman2})
by the three traces $ x = \tr X, y = \tr Y, z = \tr XY$. In fact, 
given $x,y,z \in \CC$   we can define a representation $\rho_{x,y,z}\co F_2  \to \SL $ by 
$\rho(X) = \begin{pmatrix} x & 1 \cr -1 & 0 \end{pmatrix}, \ \rho(Y) = \begin{pmatrix} 0 & \xi \cr -\xi^{-1} & y \end{pmatrix}$ where $ z = -(\xi + \xi^{-1})$.  Clearly with this definition, $ \Tr X = x, \Tr Y = y$ and $\Tr XY = z$.

\subsection{The Markoff Tree}
For matrices $U,V \in \SL$  set $u = \Tr U, v = \Tr V, w = \Tr UV$. Recall the trace relations:
\begin{equation}\label{eqn:inverse} \Tr UV^{-1} = uv-w \end{equation} and 
\begin{equation} \label{eqn:commreln}  u^2+v^2+w^2 = uvw + \Tr {[U,V]} +2.
\end{equation}
Setting $\mu =  \Tr {[U,V]} + 2$, this last equation takes the form 
$$u^2+v^2+w^2 - uvw = \mu.$$

Let $F_2 = \langle X,Y| \ \  \rangle$ as above. An element $U \in F_2$ is \emph{primitive} if it is a member  of a generating pair; we denote the set of all primitive elements by $\P$.  The conjugacy classes of primitive elements are enumerated by $\hat \QQ = \QQ \cup \infty$ and are conveniently organised relative to the Farey diagram $\F$ as shown in Figure~\ref{fig:farey}. 
This consists of  the images of the ideal triangle with vertices at $1/0,0/1$ and $1/1$ under the action of $SL(2,\ZZ)$ on the upper half plane, suitably conjugated to the position shown in the disk. The label $p/q$ in the disk is just the conjugated image of the actual point $p/q \in \RR$.

\begin{figure}[ht]
\includegraphics[width=5.5cm]{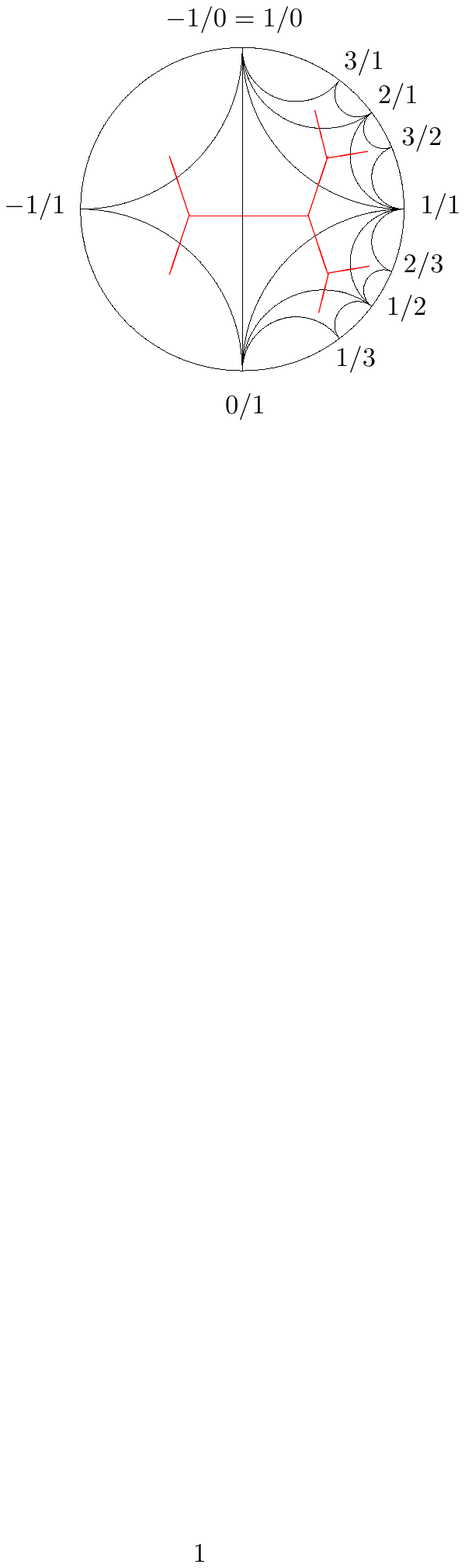}
\hspace{1cm}
\includegraphics[width=5.5cm]{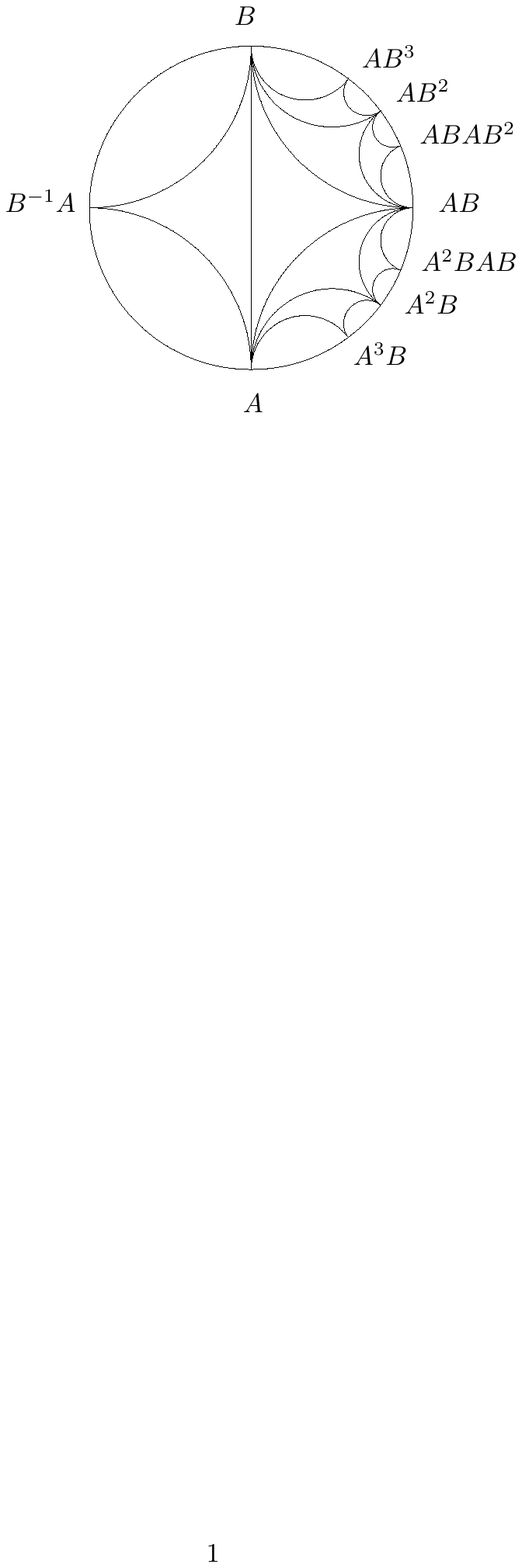}
\caption{The Farey diagram, showing the arrangement of rational numbers on the left with the corresponding  primitive words on the right.}\label{fig:farey}
\end{figure}

 Since the rational points are precisely the images of $\infty$ under $SL(2,\ZZ)$, they correspond bijectively to  the vertices of $\F$.
A pair  $p/q , r/s \in \hat \QQ$ are the endpoints of an edge  if and only if $pr-qs = \pm 1$; such pairs are called \emph{neighbours}.
A triple of points in $\hat \QQ$ are the vertices of a triangle precisely when they are the images of the vertices of the initial triangle $(1/0,0/1,1/1)$; such triples are always of the form 
$(p/q , r/s,(p+r )/( q+s))$ where  $p/q , r/s$ are neighbours.
In other words,  if $p/q , r/s$ are the endpoints of an edge, then the vertex of the triangle on the side away from the centre of the disk is found by `Farey addition' to be $(p+r )/( q+s)$. Starting from $1/0 = -1/0=  \infty$ and $0/1$, all   points in $\hat \QQ$ are obtained recursively  in this way. (Note we need to start with $-1/0=  \infty$  to get the negative fractions on the left side of the left hand diagram in Figure~\ref{fig:farey}.)

The right hand picture in Figure~\ref{fig:farey}
 shows a corresponding arrangement of primitive elements in $F_2$, one in each conjugacy class, starting with initial triple $(A,B, AB)$. Each vertex is labelled by a certain cyclically shortest representative of the corresponding word. Pairs of primitive elements form a generating pair if and only if they are at the endpoints of an edge. Triples at the vertices of a triangle correspond to a generator triple of the form $(U,V, UV)$.  Corresponding to the process of Farey addition, successive words can be found by juxtaposition as indicated on the diagram. Note that for this to work it is important to preserve the order:   if $U, V$ are the endpoints of an edge  with $U$ before $V$ in the anti-clockwise order round the circle, the correct  concatenation is $UV$. Note also that the words on the left side of the diagram involve $B^{-1}A$ corresponding to starting with $\infty = -1/0 $. We denote the particular representative of the conjugacy class corresponding to $p/q \in \hat \QQ$ found by  concatenation by  $W_{p/q}$. Its word length in the generators $A,B$ is a function $F(p/q)$ of $p/q$. A function on $\hat \QQ$ is said to have \emph{Fibonacci growth} if it is comparable with  uniform upper and lower bounds to $F$.

 \begin{figure}[ht] \includegraphics[width=7cm]{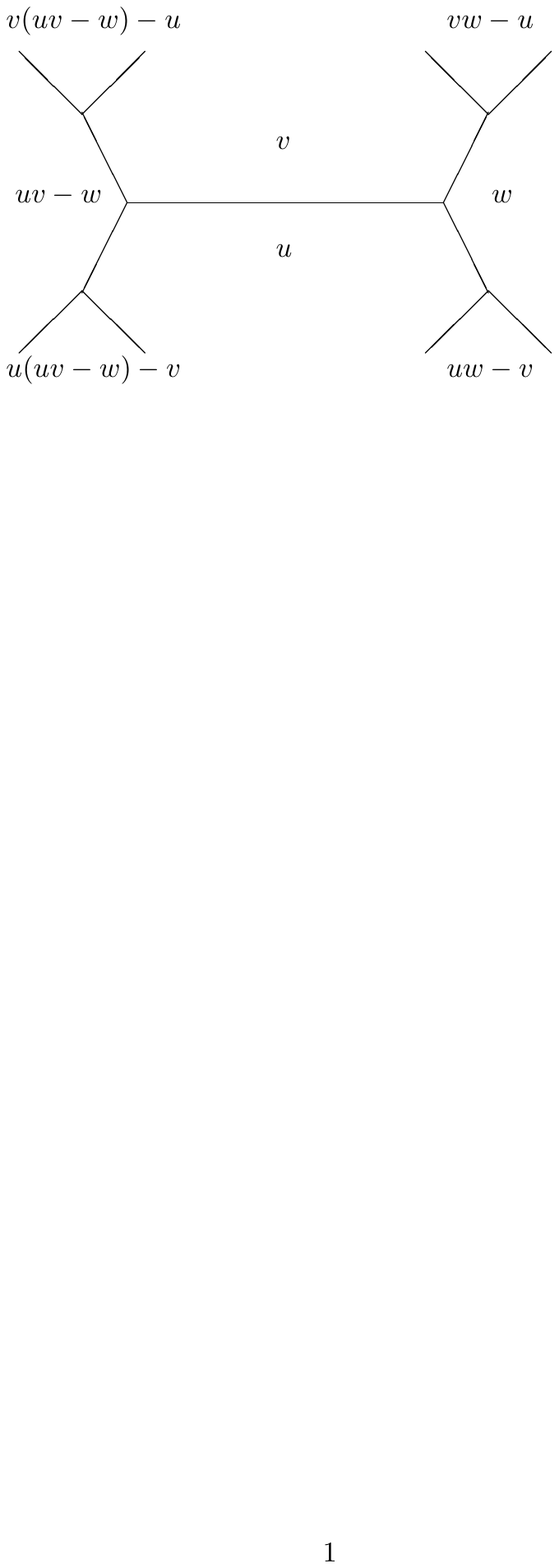}
\vspace{-0.5cm}
\caption{The Markoff tree used to compute traces with an initial triple $(u,v,w)$. }\label{fig:markofftree}
\end{figure}

In this paper we  are largely interested in computing traces of primitive elements.  Following Bowditch~\cite{bow_mar}, these can also be easily computed  by using the  trivalent  tree $\TT$ dual to $\F$, see the left frame of Figure~\ref{fig:farey}, and Figure~\ref{fig:markofftree}. Let $\Omega$ denote the set of complementary regions of $\TT$, abstractly, a complementary region is the closure of a connected component of the complement of $\TT$. As is apparent from Figure~\ref{fig:farey},   there is a bijection between $\Omega$ and the set of vertices of $\F$.   Thus the set $\Omega$ can be identified with conjugacy classes of primitive elements and hence with $\hat \QQ$.

Given a representation $\rho \co  F_2 \to \SL$, each $U \in \Omega$ is labelled by $u=\Tr \rho(U)$, the trace of the corresponding generator, as shown in 
Figure~\ref{fig:markofftree}. Labels on opposite sides of an edge correspond to traces of a generator pair: the three labels round a vertex correspond to a generator triple $(U,V,UV)$.  Crossing an edge adjacent to regions $U,V$  of $\F$ corresponds to changing the  generator triple from $(U,V,UV)$ to  $(U,V,UV^{-1})$.

Suppose that $(U,V,W)$ are 
  the labels of regions round a vertex with $ u = \Tr \rho(U)$, $v = \Tr \rho(V)$, $w = \Tr \rho(W)$. By~\eqref{eqn:commreln}
we have
$u^2+ v^2 + w^2 -uvw = \mu$. By~\eqref{eqn:inverse}, the two vertices  opposite the ends of an edge labelled $(U,V)$ have labels  $w, uv-w$ respectively. More precisely, crossing the $3$ edges of a triangle of $\mathcal F$  gives rise to the three basic moves
 $(u,v, w) \to (u,v, uv-w)$,  $(u,v, w) \to (u,uw-v, w)$,  $(u,v, w) \to (vw-u,v, w)$
which  generates  traces of all possible elements in $\Omega$ (and hence $\mathcal P$).  Note that any of these three moves  leaves $\Tr {[U,V]}$ and hence $\mu$ invariant; in other words, $\mu$ is an invariant of the tree. Bowditch's original paper was mostly confined to the case $\mu = 0$.

In this way, the Markoff tree provides a fast way to compute traces of  elements  in $\P$ starting from an initial triple $(u,v,w)$.   This is illustrated in Figure~\ref{fig:trace2}  with the initial triple $( \sqrt{x+1}, 0, \sqrt{-x+2})$ which is used in Section~\ref{sec:torustree1}. We denote the tree of traces associated to an initial triple $(u,v,w)$ by $\mathbb T_{(u,v,w)}$.
Later we will use a variant of this construction to compute traces of curves on a four pointed sphere, see Section~\ref{sec:traces}.

\begin{figure}[hbt]    \begin{center}\vspace{-.5cm}
 \includegraphics[height=7cm]{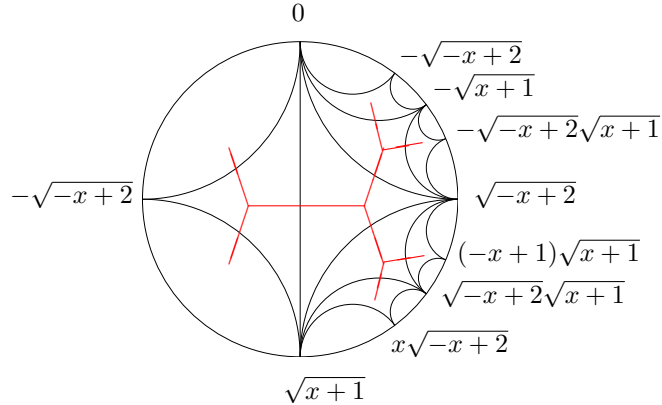}  \vspace{-.8cm}
 \caption{The Farey tessellation used to  compute  traces. See Section~\ref{sec:torustree1} for a discussion of the choice of sign of the square roots.}\label{fig:trace2}
\end{center}
\end{figure}

 \subsection{The Bowditch set} 
\label{sec:tree}

It is convenient to rephrase the above discussion using the terminology introduced in~\cite{bow_mar}.
As above, let $\Omega$ denote the set of complementary regions of the tree $\TT$.
Define a \emph{Markoff map} to be a map $\phi: \Omega \to \CC$ such that $\phi$ satisfies the trace relations~\eqref{eqn:inverse} and~\eqref{eqn:commreln}.   The set of all Markoff maps is denoted $\Phi$. 
 Since traces depend only on conjugacy classes,  
a representation $\rho \co F_2 \to \SL$ defines a Markoff map by setting $\phi(U) = \Tr \rho(U)$ for $U \in \Omega$.  Fixing once and for all an identification of $\Omega$ with $\hat \QQ$ (and recalling that $\Omega$ is identified with conjugacy classes of elements in $\P$), we have $\phi(p/q) = \Tr \rho(W_{p/q}), p/q \in \hat \QQ$, where $W_{p/q}$ is the special word in the conjugacy class corresponding to $p/q \in \Omega$. 

Thus as explained above,  using the trace relations~\eqref{eqn:inverse} and~\eqref{eqn:commreln}, an initial triple $(x,y,z) \in \CC^3$ uniquely determines a Markoff map $\phi = \phi_{x,y,z}$ together with a corresponding labelling of $\TT$.  Conversely a Markoff map $\phi \in \Phi$ determines $(x,y,z)\in \CC^3$ by setting $ x = \phi(0/1),  y = \phi(1/0), z = \phi(1/1)$. In this way, we can identify $\Phi$ with $\CC^3$.
For $\phi \in \Phi$, denote the corresponding tree $\mathbb T_{\phi} = \mathbb T_{(\phi(0/1), \phi(1/0),\phi(1/1))}$.

The Bowditch set $\mathcal B$ is the set of all  $\phi \in \Phi$ with $\mu \neq 4$ which satisfy the following conditions:
\begin{equation}\label{eqn:B1}   \phi (U)  \notin [-2,2]  \ \ \forall  \ U \in \Omega  \ \ \ \mbox {\rm {and}} \end{equation} 
\begin{equation} \label{eqn:B2}   \{U \in \Omega:  | \phi (U)| \leq 2 \}   \ \ \mbox {\rm {is finite}}.
\end{equation}
The Bowditch set $\mathcal B$ is open in $\CC^3$ and ${\rm Out}(F_2)$ acts properly discontinuously on $\mathcal B$. Furthermore, if $\phi \in \mathcal B$, then $\log^+|\phi(U)|={\rm max}\{0, \log|\phi(U)|\}$ has Fibonacci growth on $\Omega$ (see \cite{tan_gen}).

\begin{remark}\label{reducible} \rm{The maps $\phi$ for which  $\mu=4$ correspond  to the reducible representations: our definition above automatically excludes them from $\B$.  For such $\phi$, there are infinitely many $U \in \Omega$ such that $|\phi(U)|<m$  for $m>2$, they can alternatively be excluded from $\B$ by relaxing condition (\ref{eqn:B2}) to the condition  that $\{U \in \Omega:  | \phi (U)| \leq 2+\epsilon \}   $ be finite for any $\epsilon >0$. }
\end{remark}

\subsubsection{Background to the algorithm}\label{sec:algorithm}
Our algorithm for computing which points lie in $\B$ is based on results from~\cite{bow_mar, tan_gen} which we summarise here. We consider only $\phi$ for which $\mu \neq 4$. 
Following Bowditch~\cite{bow_mar}, we orient the edges of $\TT_{\phi}$ in the following way. Suppose that labels of the regions adjacent to some edge $e$ are $u,v$ and the  labels of the two  remaining regions at the two end vertices are $w,t$, see Figure~\ref{fig:markofftree}. From the trace relations, $t = uv-w$.
Orient $e$ by putting an arrow from $t$ to $w$ whenever $|t| > |w|$ and vice versa. If both moduli are equal,  make either choice; if the inequality is strict, say that the edge is \emph{oriented decisively}.

A  \emph{sink region} of $\TT_{\phi}$ is a connected non-empty subtree $T$ such that  the arrow on any edge not in $T$ points towards $T$ decisively. A sink region may consist of a single \emph{sink vertex} $v$ (the three edges adjacent to $v$ point towards $v$) and no edges. Clearly a sink region is not unique: one can always add further vertices and edges around the boundary of $T_{\phi}$.

For any $m \geq 0$ and $\phi \in \Phi$ define $\Omega_{\phi}(m) = \{ U \in \Omega | |\phi( U)| \leq m\}$.
The following  lemmas from~\cite{tan_gen} show that $\Omega_{\phi}(2)$ is connected, and that from any initial vertex not adjacent to regions in $\Omega_{\phi}(2)$, the arrows determine a descending path 
through $\TT$  which either runs into a sink, or meets vertices adjacent to regions in $\Omega_{\phi}(2)$. Furthermore, if $\phi(U)$ takes values away from the exceptional set $E = [-2,2] \cup \{\pm \sqrt{\mu}\} \subset \CC$, then there exists a finite segment of $\partial U$ such that the edges adjacent to $U$ not in this segment are directed towards this segment.

\begin{lemma}[\cite{tan_gen} Lemma 3.7]\label{forkvertex}
Suppose $U,V,W \in \Omega$ meet at a vertex $v$  with the arrows on both the edges adjacent to $U$ pointing away from $v$. Then either $|\phi(U)| \leq 2$ or $\phi(V) = \phi(W) = 0$. \end{lemma}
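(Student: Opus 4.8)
The statement concerns a vertex $v$ of $\TT_\phi$ where the regions $U,V,W$ meet, with the two edges adjacent to $U$ both oriented away from $v$. Let me set up notation: at $v$ the three regions are $U,V,W$ with traces $x = \phi(U)$, $y = \phi(V)$, $z = \phi(W)$, satisfying the Markoff relation $x^2 + y^2 + z^2 - xyz = \mu$. The edge between regions $U$ and $V$ separates $v$ from a vertex $v'$ whose third region has trace $z' = xy - z$ (by~\eqref{eqn:inverse}); the edge is oriented away from $v$ precisely when $|z'| > |z|$, i.e.\ $|xy-z| > |z|$. Similarly the edge between $U$ and $W$ separates $v$ from a vertex with third-region trace $y' = xz - y$, oriented away from $v$ when $|xz - y| > |y|$.

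So the hypothesis gives the two inequalities $|xy - z| > |z|$ and $|xz - y| > |y|$, and I want to conclude $|x| \le 2$ or $y = z = 0$. The plan is a direct computation. From the first inequality, $|xy - z|^2 > |z|^2$, which expands (writing $\langle\cdot,\cdot\rangle$ for the real inner product on $\CC$) to $|x|^2|y|^2 - 2\operatorname{Re}(\bar z x y) > 0$, i.e.\ $|x|^2 |y|^2 > 2\operatorname{Re}(x y \bar z)$. Symmetrically the second gives $|x|^2|z|^2 > 2\operatorname{Re}(x z \bar y) = 2\operatorname{Re}(x y \bar z)$ (the same real number, since $\operatorname{Re}(w) = \operatorname{Re}(\bar w)$ and $xy\bar z$ and $\overline{xz\bar y} = \bar x \bar z y$... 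I should double-check this conjugation bookkeeping carefully, but the point is both right-hand sides are the same real quantity $2\operatorname{Re}(xy\bar z)$). Adding, $|x|^2(|y|^2 + |z|^2) > 4\operatorname{Re}(xy\bar z) \le 4|x||y||z| \le 2|x|(|y|^2+|z|^2)$ by AM-GM. If $y,z$ are not both zero, then $|y|^2 + |z|^2 > 0$ and we may divide to get $|x|^2 > 2|x| \cdot (\text{something} \le 1)$... this needs a little care: I actually get $|x|^2(|y|^2+|z|^2) > 2|x|(|y|^2+|z|^2)$ only after the chain of inequalities, giving $|x| > 2$ — wait, that's the wrong direction. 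Let me reconsider: the desired conclusion is $|x| \le 2$, so I should be deriving a contradiction from assuming $|x| > 2$ together with $(y,z)\ne(0,0)$, OR the inequalities should push the other way. The correct reading: the chain $4\operatorname{Re}(xy\bar z) \le 4|x||y||z| \le 2|x|(|y|^2+|z|^2)$ combined with $|x|^2(|y|^2+|z|^2) > 4\operatorname{Re}(xy\bar z)$ yields $|x|^2(|y|^2+|z|^2) > $ nothing useful unless $\operatorname{Re}(xy\bar z) > 0$. The genuinely careful version keeps the term $2\operatorname{Re}(xy\bar z)$ and rearranges both original inequalities as $|x|^2|y|^2 + |x|^2|z|^2 > 4\operatorname{Re}(xy\bar z)$, then uses $|xy - z|^2 + |xz-y|^2 > |z|^2 + |y|^2$ directly; I expect the clean statement is that $|x| \le 2$ unless the cross term vanishes, forcing $y = z = 0$.

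The main obstacle I anticipate is precisely this sign/direction bookkeeping: making sure the two "pointing away" inequalities combine to give an upper bound on $|x|$ rather than a lower bound, and correctly handling the boundary case where one of the edges is oriented non-decisively (equality of moduli). I would resolve this by writing everything in terms of squared moduli and real parts from the start, summing the two expanded inequalities, and completing the square in $|x|$; the degenerate case $|x| = 2$ or $\operatorname{Re}(xy\bar z)$ extremal should be exactly where the alternative $\phi(V) = \phi(W) = 0$ kicks in. Since this is essentially Lemma 3.7 of~\cite{tan_gen}, I would also cross-check the final inequality against the relation $x^2+y^2+z^2-xyz=\mu$ to confirm consistency, though the Markoff relation itself does not seem to be needed for this particular lemma — it is purely a statement about the edge-orientation inequalities.
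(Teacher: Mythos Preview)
Your overall strategy---expand the squared-modulus inequalities coming from the edge orientations and combine them---is exactly the right one (and is essentially how the lemma is proved in \cite{tan_gen} and \cite{bow_mar}). However, you have the orientation convention reversed, and this is why your computation kept coming out in the wrong direction. According to the paper, the arrow on an edge points \emph{from} the vertex with the larger opposite label \emph{to} the vertex with the smaller one. So if the edge between $U$ and $V$ points \emph{away} from $v$, then the label $z$ at $v$ is the larger one: the correct inequality is $|z| \ge |xy-z|$, not $|xy-z| > |z|$. Likewise the second edge gives $|y| \ge |xz-y|$.

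With the inequalities the right way round the argument is clean. Expanding $|z|^2 \ge |xy-z|^2$ gives $2\operatorname{Re}(xy\bar z) \ge |x|^2|y|^2$, hence $2|x||y||z| \ge |x|^2|y|^2$; similarly $2|x||y||z| \ge |x|^2|z|^2$. If both $y$ and $z$ are nonzero, divide the first by $|x||y|$ and the second by $|x||z|$ (assuming $x\ne 0$, else $|x|\le 2$ trivially) to get $|x||y|\le 2|z|$ and $|x||z|\le 2|y|$; multiplying yields $|x|^2|y||z| \le 4|y||z|$, so $|x|\le 2$. If, say, $y=0$, the second expanded inequality reads $0 \ge |x|^2|z|^2$, forcing $x=0$ (hence $|x|\le 2$) or $z=0$ (hence $\phi(V)=\phi(W)=0$). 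Note also that your claim $\operatorname{Re}(xy\bar z)=\operatorname{Re}(xz\bar y)$ is false in general---the two quantities differ---but once the inequalities are oriented correctly you do not need it: each inequality is bounded separately by $2|x||y||z|$ and the two are then multiplied, not added. The Markoff relation indeed plays no role.
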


\begin{corollary}[\cite{tan_gen} Theorem 3.1(2)]\label{connected}
Let $\phi \in \Phi$. Then  $\Omega_{\phi}(2)$ (more generally, $\Omega_{\phi}(m)$ for $m \ge 2$) is connected.
\end{corollary}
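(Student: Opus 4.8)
The plan is to deduce connectedness of $\Omega_\phi(m)$ from Lemma~\ref{forkvertex} by a contradiction argument using the orientation of $\TT_\phi$. Suppose, for contradiction, that $\Omega_\phi(m)$ is disconnected for some $m \geq 2$. Since the complementary regions $\Omega$ are organized around the tree $\TT$, and adjacency of regions corresponds to edges of $\TT$, a disconnection of $\Omega_\phi(m)$ means there are two regions $U_0, U_1 \in \Omega_\phi(m)$ that cannot be joined by a chain of pairwise-adjacent regions all lying in $\Omega_\phi(m)$. Dually, thinking of $\Omega$ as the vertices of $\F$ and hence as $\hat\QQ$, the subtree of $\TT$ spanned by the edges whose \emph{both} neighbouring regions lie in $\Omega_\phi(m)$ fails to be connected; equivalently, there is an edge $e$ of $\TT$ which separates $\TT$ into two pieces, each containing a region in $\Omega_\phi(m)$, while one of the two regions adjacent to $e$ itself has $|\phi| > m$.

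Next I would exploit the orientation. Along any edge-path in $\TT$ the arrows give a well-defined direction at each edge. The key geometric observation is that, because moving across an edge replaces a label $w$ by $uv - w$ where the other two labels $u,v$ are fixed, the arrows cannot simply oscillate: if one travels along $\partial U$, the boundary path of a fixed region $U$, and the two edges of $\partial U$ at some vertex $v$ both point \emph{away} from $v$, then Lemma~\ref{forkvertex} forces either $|\phi(U)| \leq 2$ or the two neighbouring labels both vanish. I would use this to show that within the region of $\TT$ lying ``between'' the two supposed components of $\Omega_\phi(m)$, one can always follow arrows to produce a descending path; tracking where such a path must terminate yields a region $U$ with $|\phi(U)| \le 2 \le m$ in the ``forbidden'' separating zone, or else the degenerate case $\phi(V)=\phi(W)=0$, which again puts regions of small modulus into the separating zone. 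Either way the separation is contradicted. A clean way to organize this: pick a path in $\TT$ from a vertex adjacent to $U_0$ to a vertex adjacent to $U_1$; walk along it; at the first place the arrows turn against you, the ``fork'' hypothesis of Lemma~\ref{forkvertex} applies to the region on the outside of the turn, giving a region of small trace that is adjacent to the path and hence connects the two pieces — contradiction.

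An alternative, perhaps cleaner route is the one the phrasing of the excerpt hints at: use the sink structure. If $\Omega_\phi(m)$ were disconnected, take two components $C_0, C_1$. Each vertex of $\TT$ adjacent only to regions outside $\Omega_\phi(m)$ has, by Lemma~\ref{forkvertex} applied three times (once for each incident region, unless a label is $0$), at most one outgoing edge; so following arrows from such a vertex one never gets stuck at a fork and the path is forced to continue until it reaches a vertex adjacent to some region in $\Omega_\phi(m)$ or runs into a genuine sink. Starting from a vertex on the ``$C_0$ side'' of a separating edge and from one on the ``$C_1$ side'', one shows both descending paths are funnelled through the same separating edge region, forcing that region into $\Omega_\phi(m)$ after all (the degenerate all-zero case being handled separately since $0 \in [-2,2] \subseteq \Omega_\phi(m)$ whenever $m \ge 2$, so it does not break connectedness). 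This is really just a reformulation of the first argument in the language of Bowditch's tree dynamics.

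The main obstacle, I expect, is the bookkeeping needed to handle the non-decisive edges and the degenerate case $\phi(V) = \phi(W) = 0$ of Lemma~\ref{forkvertex} cleanly, so that the ``follow the arrows'' argument genuinely terminates and genuinely lands in the separating zone rather than escaping to infinity or stalling on a plateau of equal moduli. One must check that a region $U$ with $\phi(U) = 0$ is automatically in $\Omega_\phi(m)$ for $m \ge 2$ (immediate, since $|0| = 0 \le 2 \le m$), which neutralizes the degenerate branch; and one must rule out an infinite arrow-path that never meets $\Omega_\phi(m)$, which is where the finiteness/Fibonacci-growth consequences recalled before Lemma~\ref{forkvertex} — or a direct ``no infinite ascending path'' observation — do the work. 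Everything else is the combinatorics of paths in a trivalent tree, which is routine once the dynamical input from Lemma~\ref{forkvertex} is in place.
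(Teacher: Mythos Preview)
The paper gives no proof here --- it simply records the statement with a citation to \cite{tan_gen} Theorem~3.1(2) --- so there is nothing in-paper to compare against; what follows compares your sketch to the short argument in \cite{bow_mar, tan_gen}.

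Your overall strategy --- examine the arrow orientations along the geodesic $\beta$ in $\TT$ joining $\partial X$ to $\partial Y$ for two regions $X, Y \in \Omega_\phi(m)$, and invoke Lemma~\ref{forkvertex} at a suitable vertex --- is correct, but the mechanics of the key step are mis-stated and you bring in machinery that is not needed. Lemma~\ref{forkvertex} applies at a vertex $v$ where two edges in the boundary of a single region $U$ both point \emph{away} from $v$. Along $\beta$ this is a vertex $v_i$ at which the arrows on $e_i$ and $e_{i+1}$ both point away from $v_i$: a local \emph{source}, not a place where ``the arrows turn against you'' (that describes a sink, where the lemma gives nothing). Moreover the region $U$ to which the lemma then applies is the one bounded by \emph{both} $e_i$ and $e_{i+1}$, i.e.\ the region on the \emph{inside} of the bend, not the outside. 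Such a source exists whenever the arrows on $e_1$ and $e_n$ point towards the endpoints $v_0$ and $v_n$ respectively; in the remaining case the arrow on $e_1$ (say) points from $v_0$ to $v_1$, and then directly from the definition of the arrow the region opposite $e_1$ at $v_1$ has $|\phi|$-value at most $|\phi(X)| \le m$, so one inducts on the length of $\beta$. The base case $|\beta| = 1$ is the elementary observation that if $\phi(P)\phi(Q) = \phi(X) + \phi(Y)$ with $|\phi(X)|, |\phi(Y)| \le m$ and $m \ge 2$, then $\min(|\phi(P)|, |\phi(Q)|) \le \sqrt{2m} \le m$.

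The path $\beta$ is finite, so your worries about infinite descending rays, Fibonacci growth, non-decisive edges and plateaux of equal moduli are all beside the point here; those issues belong to the later analysis of the sink region $T$, not to connectedness. Once you correct the source/sink and inside/outside identifications and drop the extraneous material, your sketch becomes the standard proof.
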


\begin{lemma}[\cite{tan_gen} Lemma 3.11 and following comment] \label{infiniteray}
Suppose $\beta$ is an infinite ray consisting of a sequence of edges of $\TT_{\phi}$ all of whose arrows point away from the initial vertex. Then $\beta$ meets at least one region $U \in \Omega$  with $|\phi( U)| < 2$. Furthermore, if the ray does not follow the boundary of a single region, it meets infinitely many  regions with this property.
\end{lemma}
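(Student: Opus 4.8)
The plan is to split into two cases according to the combinatorics of the ray and to push the trace identities \eqref{eqn:inverse}--\eqref{eqn:commreln} as far as they will go in each. Write $\beta=(v_0,v_1,v_2,\dots)$ with $e_k$ the edge joining $v_k$ to $v_{k+1}$, so the hypothesis says each $e_k$ is oriented from $v_k$ towards $v_{k+1}$. The first ingredient is a purely combinatorial dichotomy for rays in the trivalent tree $\TT$ dual to $\F$: either $\beta$ eventually coincides with one end of the bi-infinite boundary path $\partial R$ of a single region $R\in\Omega$ (equivalently, under the Farey identification $\beta$ converges to a rational point), or $\beta$ converges to an irrational, in which case it ``turns'' infinitely often and every region is adjacent to only finitely many consecutive edges of $\beta$; in the latter case $\beta$ breaks into infinitely many maximal ``straight'' sub-arcs, the $i$-th of which runs along $\partial R_i$ for pairwise distinct regions $R_i$.

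In the single-boundary case let $\dots,V_{k-1},V_k,V_{k+1},\dots$ be the regions flanking $\partial R$ in order, so the three regions round a vertex of the arc are $R,V_k,V_{k+1}$ and $e_k$ is adjacent to $R$ and $V_k$. By \eqref{eqn:inverse} the $V_k$ satisfy $\phi(V_{k+1})=\phi(R)\phi(V_k)-\phi(V_{k-1})$, while the orientation of $e_k$ gives $|\phi(V_{k-1})|\ge|\phi(V_{k+1})|$; together with the triangle inequality $|\phi(V_{k+1})|\ge|\phi(R)|\,|\phi(V_k)|-|\phi(V_{k-1})|$ this forces $|\phi(V_k)|\le (2/|\phi(R)|)\,|\phi(V_{k-1})|$, which decays geometrically once $|\phi(R)|>2$ and so produces infinitely many $V_k$ with $|\phi(V_k)|<2$. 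If $|\phi(R)|<2$ then $R$ itself is the required small region. The borderline $|\phi(R)|=2$ is handled by diagonalising the recursion: unless $\phi(R)=\pm2$ its characteristic roots lie off the unit circle, so boundedness of $|\phi(V_k)|$ again forces $\phi(V_k)\to0$; and $\phi(R)=\pm2$ is impossible, because substituting the resulting constant or sign-alternating solution into \eqref{eqn:commreln} forces $\mu=\Tr[X,Y]+2=4$, which is excluded (Remark~\ref{reducible}). This settles the lemma whenever $\beta$ follows a single boundary.

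For the irrational case, suppose towards a contradiction that only finitely many regions met by $\beta$ have $|\phi|<2$; after passing to a sub-ray assume all of them have $|\phi|\ge2$, and (using the $\epsilon$-relaxation of Remark~\ref{reducible} together with $\mu\ne4$ to keep $\beta$ off the exceptional set $E=[-2,2]\cup\{\pm\sqrt\mu\}$) that in fact $|\phi|>2$ along $\beta$. Then at each vertex $v_k$ Lemma~\ref{forkvertex} forbids both edges adjacent to any single one of its three regions from pointing away from $v_k$; since each of the three edges at $v_k$ is adjacent to two of the regions, a short count shows $v_k$ has at most one outgoing edge, hence exactly one -- the ray edge $e_k$ -- and the third edge $f_k$ points \emph{into} $v_k$. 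Thus the orientation of $\TT_\phi$ along $\beta$ is completely forced and every branch hanging off $\beta$ drains into it.

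Combining the two inputs closes the contradiction: $\beta$ decomposes into infinitely many straight arcs along $\partial R_i$ with (after a final truncation) $|\phi(R_i)|\ge2$, and on each arc the estimate of the second paragraph forces the flanking traces down by a factor $2/|\phi(R_i)|\le1$ per edge; moreover the ``running maximum'' of the traces of the newly-appearing regions is non-increasing in the same sense and hence bounded by some $M$, so a single step along any arc already brings a flanking region to $|\phi(V)|\le (2/|\phi(R_i)|)M\le M$, and the same borderline analysis as before -- again invoking $\mu\ne4$ and the exclusion of $E$ -- sharpens this to $|\phi(V)|<2$ on all but finitely many arcs, giving infinitely many regions met by $\beta$ with $|\phi|<2$, in particular at least one (which also covers the first assertion and the single-boundary case). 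The main obstacle is exactly this irrational case: controlling the traces along a ray that changes direction infinitely often, together with the need to excise the borderline loci $|\phi|=2$ and $\phi\in\{\pm2,\pm\sqrt\mu\}$ -- which is precisely where the hypothesis $\mu\ne4$ (irreducibility) and the exceptional set $E$ genuinely enter; the single-boundary case, by contrast, is an elementary recursion estimate.
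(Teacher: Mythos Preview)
The paper does not give its own proof of this lemma; it is simply quoted from \cite{tan_gen}, so there is nothing in the present paper to compare against. That said, your single-boundary case is correct and well argued: the recursion $\phi(V_{k+1})=\phi(R)\phi(V_k)-\phi(V_{k-1})$ together with the arrow condition $|\phi(V_{k-1})|\ge|\phi(V_{k+1})|$ yields $|\phi(V_k)|\le(2/|\phi(R)|)\,|\phi(V_{k-1})|$, and your treatment of the borderline $|\phi(R)|=2$ (diagonalising the recursion, then using the vertex identity \eqref{eqn:commreln} to force $\mu=4$ when $\phi(R)=\pm2$) is exactly right.

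The irrational case, however, has a genuine gap. First, the passage from $|\phi|\ge2$ to $|\phi|>2$ is not justified: Remark~\ref{reducible} describes what happens \emph{when} $\mu=4$ and says nothing about strictness of the inequality for $\mu\ne4$; nothing in your setup rules out regions along $\beta$ with $|\phi|$ exactly $2$. Second, and more seriously, the final paragraph is not a proof. One can indeed show the moduli along $\beta$ stay bounded (the arrow on $e_{k-1}$ gives $|\phi(N_k)|\le|\phi(L_{k-1})|$ for the newly appearing region $N_k$, and since $\{P_k,L_k\}=\{P_{k-1},N_{k-1}\}$ the running maximum of the three moduli at $v_k$ is non-increasing). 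But your assertion that ``the same borderline analysis as before sharpens this to $|\phi(V)|<2$ on all but finitely many arcs'' does not follow. The borderline analysis in the single-boundary case relied on the \emph{full infinite} recursion along $\partial R$ to force $\mu=4$; along a \emph{finite} arc of $\partial R_i$ no such conclusion is available, and the per-edge contraction factor $2/|\phi(R_i)|$ may equal $1$ (when $|\phi(R_i)|=2$) or approach $1$ as $i\to\infty$, so the cumulative product over all arcs need not tend to $0$ and certainly need not drop below $2$. The digression using Lemma~\ref{forkvertex} to show the side edges $f_k$ drain into $\beta$ is correct under the (unjustified) strict hypothesis, but you never use it. To close the argument one needs a further idea---for instance a compactness argument on the bounded triples $(|\phi(P_k)|,|\phi(L_k)|,|\phi(N_k)|)$ combined with the vertex relation and arrow constraints---which your sketch gestures at but does not supply.
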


\begin{lemma}[\cite{tan_gen} Lemma 3.20] \label{finiteboundary}
Suppose that $\phi(U) \notin E$ and consider the regions $V_i, i \in \ZZ$ adjacent to $U$ in order round $\dd U$. Then away from a finite subset, the values $|\phi(V_i)|$  are increasing and approach  infinity as $ i \to \infty$ in both directions.   Hence there exists a finite segment of $\partial U$ such that the edges adjacent to $U$ not in this segment are directed towards this segment.
 \end{lemma}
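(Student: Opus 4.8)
The plan is to reduce the statement to the elementary asymptotics of a three–term linear recurrence along $\dd U$. Write $u=\phi(U)$ and $v_i=\phi(V_i)$. Since $V_{i-1},V_i$ and $V_i,V_{i+1}$ are the pairs of regions meeting $U$ at two consecutive vertices of $\dd U$, the triples $(U,V_{i-1},V_i)$ and $(U,V_i,V_{i+1})$ differ by the basic move across the $\TT$–edge adjacent to $U$ and $V_i$; by the trace relation~\eqref{eqn:inverse} (the two regions opposite the ends of an edge labelled $(U,V_i)$ carry labels $w$ and $uv_i-w$) this forces $v_{i+1}=u v_i-v_{i-1}$. Concretely, when $U$ is identified with $1/0$ and $V_i$ with $i\in\ZZ$, this is just the Cayley--Hamilton identity $\Tr(AB^{i+1})=\Tr(B)\,\Tr(AB^{i})-\Tr(AB^{i-1})$. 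Thus $(v_i)_{i\in\ZZ}$ satisfies $v_{i+1}-u v_i+v_{i-1}=0$, with characteristic polynomial $t^2-ut+1$; let $\lambda,\lambda^{-1}$ be its roots, so $\lambda+\lambda^{-1}=u$.

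Next I would extract what the hypothesis $\phi(U)\notin E$ gives. First, $\phi(U)\notin[-2,2]$ implies $|\lambda|\ne 1$ (if $|\lambda|=1$ then $\lambda^{-1}=\bar\lambda$, so $u=2\,\Re\lambda\in[-2,2]$) and $\lambda\ne\lambda^{-1}$; say $|\lambda|>1$. Hence $v_i=A\lambda^{i}+B\lambda^{-i}$ for constants $A,B\in\CC$ determined by $v_0,v_1$. The key point is that $A\ne 0$ and $B\ne 0$: if $A=0$ then $v_i=B\lambda^{-i}$ for all $i$, and substituting into the $\mu$–relation~\eqref{eqn:commreln} gives $\mu=u^2+v_i^2+v_{i+1}^2-u v_i v_{i+1}=u^2+B^2\lambda^{-2i-1}(\lambda+\lambda^{-1}-u)=u^2$, so $\phi(U)=\pm\sqrt{\mu}\in E$, contrary to hypothesis (and if $A=B=0$ then all $v_i=0$ and again $\mu=u^2$); the case $B=0$ is symmetric. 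So the two defining conditions of $E$, namely $\phi(U)\notin[-2,2]$ and $\phi(U)\ne\pm\sqrt{\mu}$, are exactly what place us in the generic, genuinely hyperbolic, two–exponential situation.

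The conclusion is then immediate. Since $A,B\ne 0$ and $|\lambda|>1$, we have $v_i/\lambda^{i}\to A$ as $i\to+\infty$ and $\lambda^{i}v_i\to B$ as $i\to-\infty$, so $|v_i|\to\infty$ in both directions; moreover $|v_{i+1}|/|v_i|\to|\lambda|>1$ as $i\to+\infty$ and $|v_{i-1}|/|v_i|\to|\lambda|$ as $i\to-\infty$, hence outside a finite set of indices $|v_i|$ is strictly increasing as $|i|$ grows. This is the first assertion. For the ``Hence'': the edge $e_i$ of $\TT$ separating $U$ from $V_i$ has at its two endpoint vertices $(U,V_i,V_{i-1})$ and $(U,V_i,V_{i+1})$ the off-$\dd U$ labels $v_{i-1}$ and $v_{i+1}$, and by the orientation convention (Figure~\ref{fig:markofftree}) it is oriented decisively towards the endpoint carrying the smaller of $|v_{i-1}|,|v_{i+1}|$. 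For $i$ large positive this is the $(U,V_i,V_{i-1})$ end, and for $i$ large negative the $(U,V_i,V_{i+1})$ end; in both cases $e_i$ points ``inward'' along $\dd U$. Choosing the finite segment of $\dd U$ to consist of those vertices $T_j=(U,V_j,V_{j+1})$ with $|j|$ below the threshold above then makes every edge of $\dd U$ outside it point towards it.

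I expect no serious obstacle here: the recurrence analysis is routine, and the only things needing care are the combinatorial bookkeeping — pinning down the recurrence governing $\dd U$, keeping the cyclic order of the $V_i$ straight, and matching the sink-orientation convention — together with the short but essential observation that the degenerate solutions $A=0$ and $B=0$ are precisely the ones ruled out by $\phi(U)\ne\pm\sqrt{\mu}$.
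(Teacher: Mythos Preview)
Your argument is correct. The paper does not give its own proof of this lemma; it is quoted directly from~\cite{tan_gen}, so there is no in-paper proof to compare against. That said, your approach is essentially the standard one (and is the one underlying~\cite{tan_gen}): reduce to the linear recurrence $v_{i+1}=uv_i-v_{i-1}$ along $\partial U$, solve it as $v_i=A\lambda^i+B\lambda^{-i}$, and observe that the hypothesis $\phi(U)\notin E$ is exactly what forces $|\lambda|\ne 1$ and $AB\ne 0$. Your computation that $A=0$ (or $B=0$) forces $u^2=\mu$ is right; in fact a slightly more careful version of the same calculation gives $AB=(u^2-\mu)/(u^2-4)$, which is precisely the quantity appearing under the square root in the paper's function $H_\mu$ in~\eqref{eqn:Hx} --- so your argument also explains where that explicit bound comes from. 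The bookkeeping for the edge orientations along $\partial U$ is handled correctly: the edge adjacent to $U$ and $V_i$ has the third labels $v_{i-1},v_{i+1}$ at its two ends, and the eventual monotonicity of $|v_i|$ makes these edges point inward.
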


We remark that if $\phi(U)=  \pm \sqrt{\mu}$  and $\sqrt{\mu} \not\in [-2,2]$, then the values of $|\phi(V_i)|$ in Lemma \ref{finiteboundary} approach zero in one direction round $\dd U$ (\cite{tan_gen} Lemma 3.10) and hence $\phi \not\in \B$ since condition (\ref{eqn:B2}) will not be satisfied. Hence, for $\phi \in \B$, $\phi(U) \not\in E$ for all $U \in \Omega$.

The set $\Omega_{\phi}(2)$ can be used to construct a sink region $T$ which is finite if and only if $\phi \in \mathcal B$. Essentially, if $\phi \in \mathcal B$, then $T$ consists of finite segments of the boundaries of the (finite number of) elements of $\Omega_{\phi}(2)$. These are the segments alluded to in Lemma \ref{finiteboundary}; they have to be large enough so the conclusion of the lemma holds, and also to contain all edges adjacent to $U,V$ with $U,V \in \Omega_{\phi}(2)$ so that the union is connected. To do this, an explicit function 
  $H_{\mu}:\CC \rightarrow \RR^+ \cup \{\infty\}$ is constructed (see~\cite{tan_gen} Lemma 3.20, the following remark and Lemma 3.23)   as follows:
\begin{enumerate}
\item If $x \in E$, define $H_{\mu}(x) =\infty$;
\item For $x \not\in E$, let $x=\lambda +\lambda^{-1}$ with $|\lambda|>1$ (note that $|\lambda| \neq 1$ since $x \not\in [-2,2]$). Define
\begin{equation}\label{eqn:Hx} 
H_{\mu}(x) =\max \left\{ 2, \sqrt{\left|\frac{x^2-\mu}{x^2-4}\right|}\frac{2|\lambda|^2}{|\lambda|-1} \right\} . 
\end{equation}
\end{enumerate}
Then $H_{\mu}$ is continuous on $\CC \setminus E$. Now we can define  a specific attracting subtree:

\begin{definition} \label{sinkregion} Given $\phi \in \Phi$, let $T$ be the subset of $\mathbb T_{\phi}$ defined as follows: 
\begin{enumerate}
\item An edge with adjacent regions $U,V$  is in $T$ if and only if either $|\phi(U)| \leq 2$ and $|\phi(V)| \leq H_{\mu}(\phi(U))$, or vice versa. 
\item Any sink vertex is in $T$, as are any vertices which are the end points of two edges in $T$.
\end{enumerate}
\end{definition}

Based on the above lemmas, we have the following theorem (see also the special properties of the function  $H_{\mu}$  and Lemmas 3.21-3.24 in~\cite{tan_gen}).

\begin{theorem} Given $\phi \in \Phi$ (with $\mu \neq 4$), the set  $T$   in Definition \ref{sinkregion} is a non-empty, connected subtree of 
$\TT_{\phi}$. Moreover  $T$ is a sink region for $\TT_{\phi}$, that is, all edges not in $T$ are directed decisively towards $T$. Furthermore, $T$ is finite if and only if $\phi \in \mathcal B$.
\end{theorem}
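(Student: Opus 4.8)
\emph{Overall plan and non-emptiness.} I would prove the four assertions in the order: $T\neq\emptyset$; $T$ connected (hence a subtree, since $\TT_\phi$ is a tree); $T$ a sink region; and $T$ finite $\iff\phi\in\B$, combining Lemmas \ref{forkvertex}--\ref{finiteboundary} with the quantitative properties of $H_\mu$ from \cite{tan_gen}. For non-emptiness, first dispose of the degenerate case $\Omega_\phi(2)=\emptyset$: following the arrows from an arbitrary vertex gives a maximal descending path which, were it infinite, would be a ray with all arrows pointing away from its initial vertex, contradicting Lemma \ref{infiniteray}; hence it ends at a sink vertex and $T\neq\emptyset$, and two distinct sink vertices would, along the path joining them, yield a vertex with two outgoing edges and thus by Lemma \ref{forkvertex} an adjacent region with $|\phi|\le 2$ (directly, or because two traces vanish), which is impossible, so here $T$ is a single vertex. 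If $\Omega_\phi(2)\neq\emptyset$, I would show each $U\in\Omega_\phi(2)$ meets an edge of $T$: this is immediate when $\phi(U)\in E$, since then $H_\mu(\phi(U))=\infty$; when $\phi(U)\notin E$, write $\phi(U)=\lambda+\lambda^{-1}$ with $|\lambda|>1$, note that the traces $v_i$ of the neighbours of $U$ round $\partial U$ satisfy $v_{i+1}=\phi(U)v_i-v_{i-1}$, so $v_i=A\lambda^i+B\lambda^{-i}$ with $|AB|=\left|(\phi(U)^2-\mu)/(\phi(U)^2-4)\right|$ by \eqref{eqn:commreln} and $A,B\neq 0$, and choose $i$ with $\sqrt{|AB|}\le|A\lambda^i|<|\lambda|\sqrt{|AB|}$ to obtain $|v_i|\le(|\lambda|+1)\sqrt{|AB|}\le H_\mu(\phi(U))$, so the edge $U$--$V_i$ lies in $T$.

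\emph{Connectedness.} By Corollary \ref{connected} the regions of $\Omega_\phi(2)$ are connected in $\TT_\phi$, and if $U,U'\in\Omega_\phi(2)$ are adjacent then $|\phi(U')|\le 2\le H_\mu(\phi(U))$, so the edge between them is in $T$; it therefore suffices to show that for each fixed $U\in\Omega_\phi(2)$ the edges of $T$ along $\partial U$, together with the vertices between consecutive ones, form a single connected segment of $\partial U$ that moreover contains every edge from $U$ to a neighbour in $\Omega_\phi(2)$. \emph{This is the step I expect to be the main obstacle}: when $\phi(U)\notin[-2,2]$ the moduli $|v_i|$ need not be unimodal in $i$ (the cross term in $|A\lambda^i+B\lambda^{-i}|^2$ oscillates), so one genuinely needs the second term of \eqref{eqn:Hx} to be large enough to close up any gaps; I would import this, together with the monotonicity statement of Lemma \ref{finiteboundary}, from \cite{tan_gen} (Lemmas 3.20--3.23 and the ``special properties'' of $H_\mu$). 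Granting it, gluing these segments along the connected complex $\Omega_\phi(2)$—or, when $\Omega_\phi(2)=\emptyset$, taking the unique sink vertex—shows $T$ is connected.

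\emph{$T$ is a sink region.} Let $e\notin T$; as $T$ is a connected subtree it lies entirely in one component of $\TT_\phi\setminus e$, and I claim the arrow on $e$ points decisively into that component. If not, follow the arrows away from $T$, obtaining a maximal descending ray $\beta$ starting with $e$ traversed into the far component. If $\beta$ is infinite, Lemma \ref{infiniteray} produces a region $U$ met by $\beta$ with $|\phi(U)|<2$, hence $U\in\Omega_\phi(2)$; since $\beta$ remains in the far component, either $e$ lies on $\partial U$—in which case $|\phi(\text{other region of }e)|>H_\mu(\phi(U))$ and Lemma \ref{finiteboundary} together with \eqref{eqn:Hx} already direct $e$ towards the $T$-segment at $U$, a contradiction—or $\partial U$ avoids $e$ and so lies entirely in the far component, contradicting the fact that its $T$-segment lies in $T$ in the near component. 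If $\beta$ is finite it terminates at a sink vertex, which lies in $T$ by Definition \ref{sinkregion}(2), again impossible. Decisiveness throughout follows from the strict monotonicity of $|v_i|$ outside the $T$-segments (Lemma \ref{finiteboundary}) and from $H_\mu>2$, using also the remark after Lemma \ref{finiteboundary}.

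\emph{Finiteness.} If $\phi\in\B$ then $\Omega_\phi(2)=\{U:|\phi(U)|\le 2\}$ is finite by \eqref{eqn:B2}, and for each such $U$ we have $\phi(U)\notin E$ by \eqref{eqn:B1} and the remark after Lemma \ref{finiteboundary}, so $H_\mu(\phi(U))<\infty$; then Lemma \ref{finiteboundary} shows only finitely many neighbours $V_i$ of $U$ satisfy $|\phi(V_i)|\le H_\mu(\phi(U))$, so each $T$-segment is finite, $T$ has finitely many edges, and—checking once more via \cite{tan_gen} that every sink vertex lies in one of these finitely many segments—$T$ is finite. Conversely, if $\phi\notin\B$ then either some $\phi(U)\in[-2,2]\subset E$, whence $H_\mu(\phi(U))=\infty$ and all of the infinitely many edges of $\partial U$ lie in $T$; or $\Omega_\phi(2)$ is infinite, and then by the non-emptiness argument each of its infinitely many elements contributes at least one edge to $T$, while each edge of $\TT_\phi$ borders at most two regions, so $T$ has infinitely many edges. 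In either case $T$ is infinite, which completes the proof.
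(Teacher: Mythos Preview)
Your proposal is correct and follows essentially the same route as the paper, which does not give its own proof but simply cites Lemmas 3.21--3.24 and the special properties of $H_\mu$ from \cite{tan_gen}; your sketch is a faithful expansion of precisely that argument, with the structure (non-emptiness via descending paths and the recurrence along $\partial U$, connectedness via $\Omega_\phi(2)$ and the design of $H_\mu$, sink property via Lemma~\ref{infiniteray}, finiteness via Lemma~\ref{finiteboundary}) matching the intended proof. You have also correctly flagged the connectedness of the $T$-segments along each $\partial U$ as the one genuinely delicate step that must be imported from \cite{tan_gen}.
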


\subsubsection{The algorithm}\label{sec:algorithm1}

Based on the above discussion, our algorithm to decide whether or not $\phi \in \B$ is as follows. 
\begin{description}
\item [Step 1] Starting at any vertex, follow the direction of decreasing arrows. On reaching  a  sink vertex,
stop. This vertex is in $T$ by Definition~\ref{sinkregion}. If the input is $\B$, then this method always finds a sink vertex in finite time because there is a finite sink region.  Otherwise, the process may not terminate in (pre-specified) finite time and the algorithm is indecisive. 
\item [Step 2]  Assuming a stopping point is found in Step 1, starting from this point, search outwards by a depth first search using Definition~\ref{sinkregion} to identify whether or not an edge is in $T$. This works because of the connectedness of $T$. If this search terminates in (pre-specified) finite time then  $\phi_{x,y,z} \in \B$. Otherwise, the algorithm is indecisive.
\end{description}

Note that  if the starting point is a sink vertex and the three adjacent edges are not in $T$, then $T$ consists of just the sink vertex  by the connectedness of $T$, hence  $\phi_{x,y,z} \in \B$. This occurs for example for the tree $\TT_{(x,x,x)}$ with $x \in (2,3)$.  

Figure~\ref{Diagonal-BQ} shows the Bowditch set in the diagonal slice $\Delta$ as determined by this algorithm.

\begin{remark}
 \rm{We do not have an algorithm whose output is  $\phi_{x,y,z} \notin \B$. When $\mu=0$, it was shown in \cite{NT} that if $|\phi(U)|\le 0.5$ for some $U \in \Omega$, then  $\phi_{x,y,z} \notin \B$. Hence in Step 1 above, if $\mu=0$, we can stop when we hit a region satisfying this condition and conclude that  $\phi_{x,y,z} \notin \B$. Using the same methods, a similar upper bound can be found for $\mu$ close to $0$. In particular, there is a neighbourhood of $(0,0,0)$ which is disjoint from $\B$, as clearly illustrated in Figure ~\ref{Diagonal-BQ}. However, as shown in \cite{GMST}, no such universal positive bound exists for all $\mu$: precisely, for any $\epsilon >0$ and $\mu>4$, there exist  $\phi \in \B_{\mu}$ and $U \in \Omega$ such that $|\phi(U)|<\epsilon$.  Another issue is that the sink region may be extremely large so may not be detected in a program with a given finite number of steps, this occurs when we approach the boundary of $\B$. Thus the  algorithm is not completely decisive although it appears to give nice results. In particular, there may be false negatives; however points which are determined to be in $\B$ are correctly marked. }
\end{remark}

  \begin{figure}[ht]
  \includegraphics[width=7cm]{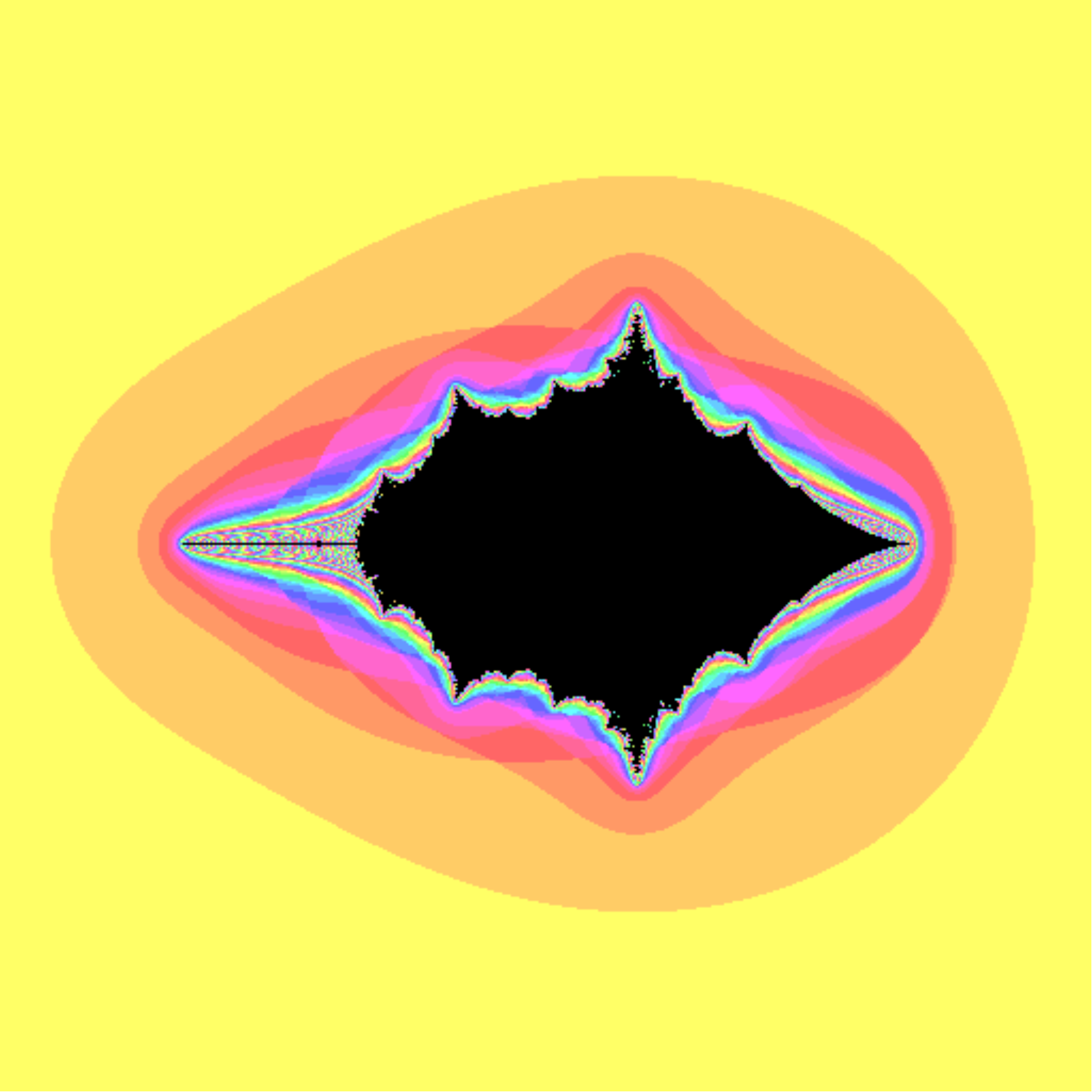}
\caption{The Bowditch set  $\B$ for the Markoff maps $\phi_{(x,x,x)}$, plotted in the $x$-plane. The coloured (grey) points are in  $\B$ and the black ones are outside. The colours (shades of grey) indicate the size of the sink region $T$.} \label{Diagonal-BQ} \end{figure}

\section{Groups, manifolds, symmetries and quotients} \label{sec:basicconfig}

In this section we detail a construction which allows us conveniently to exploit the three-fold symmetry of groups in the diagonal slice $\Delta$. 
As is well known, if the image of a representation $\rho \co F_2 \to \SL$ is free and discrete  then $\HH^3/\rho(F_2)$ is a genus two handlebody $\H$,  see~\cite{hempel} Theorem 5.2. (Note that a hyperbolic $3$-manifold is irreducible, hence prime, and that $\pi_2(M) = 0$.)
Rather than working with $\H$, however, it is much easier to work with the quotient $\S$ of $\H$ by the order $3$-symmetry $\O$ corresponding to cyclic permutation of the parameters. We also introduce  a  commensurable orbifold $\T$ with a torus boundary  $ \dd \T$.  

Both $\S = \H /\O$ and $\T$  surject to a $3$-orbifold $\U$ with fundamental group  a so-called \emph{$(P,Q,R)$-group}. Its boundary  $\dd \U$ is a sphere with three order $2$ and one order $3$ cone points. A similar construction has been used extensively by Akiyoshi et al, see for example~\cite{akiyoshi}, and is the basis of Wada's program OPTi, hence was convenient for our computations.   
In this section we explain these constructions in detail,  using them to find explicit representations of all four groups.

\subsection{The handlebody and related orbifolds} \label{sec:mainhandlebody}

The symmetric handlebody $\H$ can be thought of as made by gluing two solid pairs of pants each with order $3$-symmetry. More precisely, take a $3$-ball  and remove three open disks from the boundary, placed so as to have order $3$ rotational symmetry. Gluing two such balls along the open disks produces a handlebody $\H$ with the required order three symmetry $\O$. Rather than write down a suitably symmetric representation of $\pi_1(\H)$ directly, we consider first the quotient orbifold $\S = \H /\O$.  As will be justified in retrospect when we have identified the representations explicitly, this is a ball with  two cone axes around each of which the angle is $2\pi/3$. Its boundary $\dd \S$ is a sphere $\Sigma_{0;3,3,3,3}$ with $4$ order $3$ cone points.  We will call $\S$ the \emph{large coned ball}. 

The ball $\S$ has a further order $4$ symmetry group. 
Consider  the two cone axes  which form the singular locus of $\S$, together with their common perpendicular $C$. This configuration is invariant under the $\pi$-rotation about $C$, and also under $\pi$-rotations about a unique pair of orthogonal 
 lines  on the plane orthogonal to $C$ passing through its midpoint $O$, see Section~\ref{sec:largeball}.  Denoting these latter rotations $P,Q$, the $\pi$-rotation about $C$ is $PQ$ and the entire configuration is invariant under 
 $\langle P,Q \rangle = \ZZ_2 \times \ZZ_2$. Thus we obtain a further quotient  orbifold $\U = \S/(\ZZ_2 \times \ZZ_2)$, also topologically a ball,  which we call  the \emph{small coned ball}. 
The singular locus of $\U $ is as follows. Let $\bar O$ and $\bar  E$ be the images in $\U$ of the midpoint $O$ of $C$ and the point where $C$ meets $\Ax K$ respectively, where $K$ is one of the two order three rotations. Let $\bar C$ be the image of $C$, so that $\bar C$ is a line from $\bar O$ to $\bar E$.
 From  $\bar  O$ emanate three mutually orthogonal   lines  corresponding  to the order $2$ axes of $P,Q$ and $PQ$. One of  these is the line  $\bar C$ corresponding to $PQ$ which ends at $\bar  E$. From $\bar  E$ also emanates an order $3$ singular line, the projection of $\Ax K$, perpendicular to $\bar C$. The boundary $\dd \U$ is a sphere $\Sigma_{0;2,2,2,3}$ with $3$ cone points of order $2$  and one of order $3$. The order $3$ cone point is the end point of the order 3 singular line and the order $2$ cone points are the endpoints on $\dd U$ of the axes of $P, Q$ and a third involution $R$ defined below.

Finally, there is a double cover of the small coned ball $\U$ by an orbifold $\T$ which is topologically a solid  torus. Its
 boundary  is a torus $  \dd \cal T$ with a single cone point of angle $4 \pi/3$. Just as the quotient of a once punctured torus $  \Sigma_{1;\infty}$ by the hypelliptic involution is the surface $\Sigma_{0;2,2,2,\infty}$, so the quotient of    $ \dd \cal T $  by the hypelliptic involution $\iota$ is the surface $\dd \U = \Sigma_{0;2,2,2,3}$.   The involution $\iota$ extends to an involution, also denoted $\iota$, of $\cal T$ such that $\cal T/\iota = \U$.

The group $\pi_1(\U)$  is generated by  $(P,Q, K)$. We can replace $K$ 
by a further involution $R$ such that $RQP = K$. 
 To do this,  let  $R$ be an order $2$ rotation about an axis contained in the plane through $E$ orthogonal to $ \Ax K$, such that the axis makes an angle $\pi/3$ with $C$. (We will fix orientations more precisely below.) Then $R (QP)$ is a $2\pi/3$-rotation about $ \Ax K$, in other words, provided orientations have been chosen correctly,  we can identify $\pi_1(\U) $ with a group $  \langle P,Q,R | P^2=Q^2=R^2= (RPQ)^3  = -\rm{id}, PQ=-QP \rangle \subset \SL$. (For a discussion on the choice of signs, see Remark~\ref{signchoices} below.)

In~\cite{akiyoshi} and other papers by the same authors,  groups generated by three involutions $P,Q,R$
with $R QP$ parabolic, 
are used as a convenient way of parameterizing representations of once punctured tori, where the
 torus in question is now a two-fold cover  of  the orbifold  
 with fundamental group $\langle P,Q,R \rangle$ with quotient induced by the hyperelliptic involution.
  A small modification of their parameterization allows us to write down a convenient  general form for a representation $\pi_1(\U) \to \SL$, from which we obtain explicit representations of $\pi_1(\H), \pi_1(\S)$ and $\pi_1(\T)$. This we do in the next section.

 \subsection{The basic configuration and the small coned ball}\label{sec:basicconfig1}
 
 We start with a general construction for representations $\pi_1(\U) \to \SL$, that is, of subgroups $ \langle P,Q,R | P^2=Q^2=R^2= (RPQ)^3  = -\rm{id}, PQ=-QP \rangle \subset \SL$.  For convenience we refer to such a group (or its image in $\PSL$) as a \emph{$(P,Q,R)$-group}.

We will make our calculations using  \emph{line matrices} following \cite{Fenchel}. Note this  will define representations into $\SL$, thus fixing the signs of traces.
Let $u,u' \in \Chat$, and denote the oriented line from $u$ to $u'$ by $[u,u']$.
The associated line matrix $M([u,u'])\in\SL $ is a matrix which induces an order two rotation about $[u,u']$
and such that $
M([u,u'])^2 = -\rm{id}$, so that in particular
 $$M([0,\infty])  = \begin{pmatrix} i & 0 \\ 0 & -i \end{pmatrix}.$$
 By \cite{Fenchel}, p. 64, equation (1), we have, if $u,u' \in \CC$:
$$
M([u,u']) = \frac{i}{u-u'}
\begin{pmatrix} u+u' & -2uu' \\ 2 & -u-u' \end{pmatrix}.
$$

The representation we require is derived from 
a basic configuration shown in Figure~\ref{fig:basicconfig}. 
It depends on a single parameter $ \z \in \CC$ which we will relate to the original parameter $x$ in \ref{sec:solidtorus} below.

 \begin{figure}[hbt] 
\includegraphics[height=4cm]{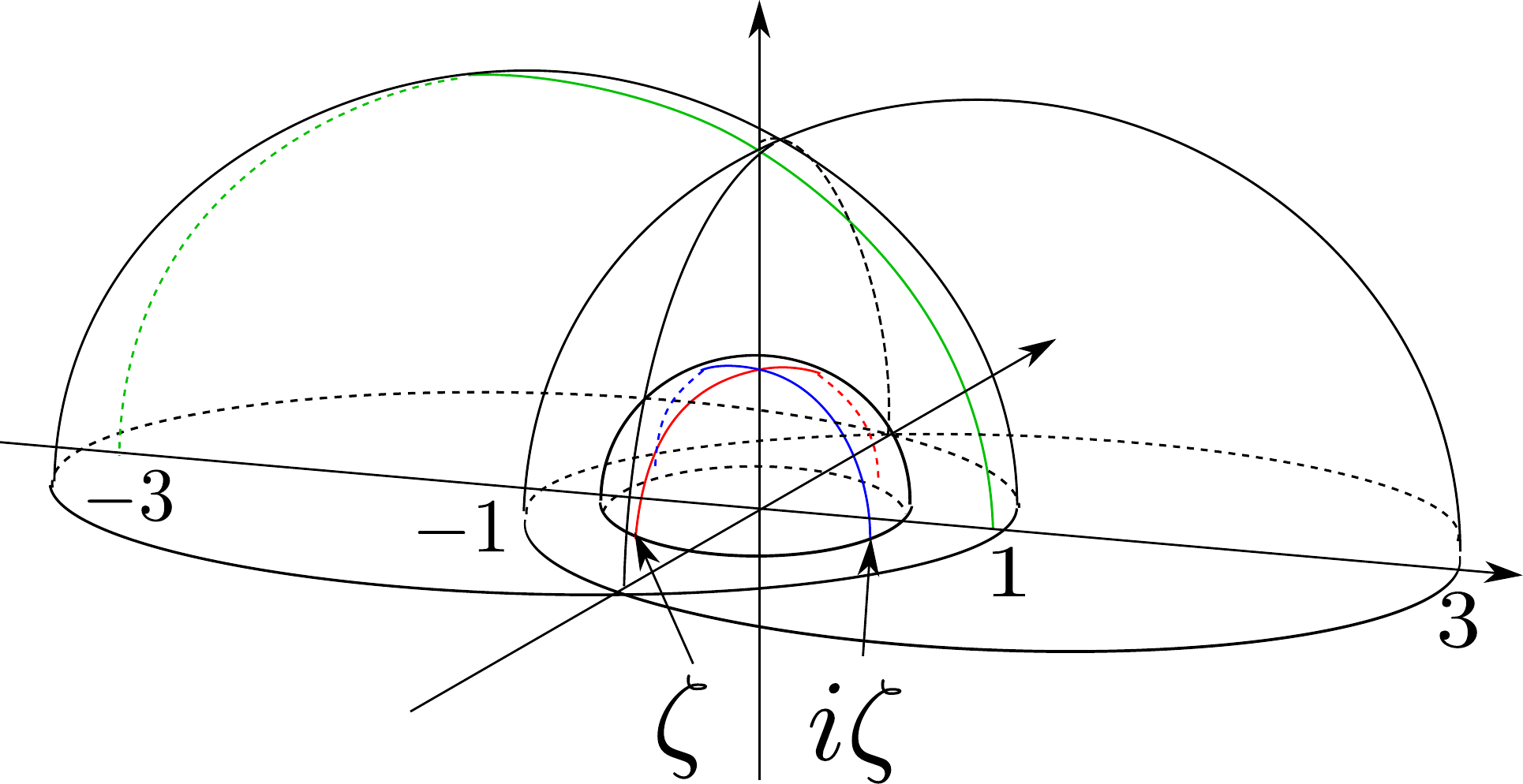} 
 \caption{The basic configuration for the $(P,Q,R)$-group $\pi_1(\U)$.}\label{fig:basicconfig}
\end{figure}

Let $\z\in\mathbb C$ and $P, Q, R\in\SL $ be $\pi$-rotations
about the oriented lines $[\z, -\z]$, $[i\z, -i\z]$ and $[1,-3]$, respectively. By construction $P^2=Q^2= R^2 = -\rm {id}$. Moreover $ \Ax P$ and $ \Ax Q$ intersect at the point $|\z|j \in \HH^3$ on the hemisphere of radius $|\z|$ centre $0 \in \CC$, where $ z + tj$ represents the point at height $t>0$ above $z\in \CC$ in the upper half space model of $\HH^3$. Thus $PQ = -QP$ and $PQ$ is an order $2$ rotation about the vertical axis $0 + tj, t>0$.

Let $V$ be the vertical plane  above the real axis  in $\HH^3$. Note that  the oriented axes of the order two rotations $PQ$ and $R$ both lie in  $V$,  intersecting in the point $\sqrt 3 j $   at angle $\pi/3$.
The line $[-\sqrt{3}i, \sqrt{3}i]$ passes through this point and is orthogonal to $V$.
It follows that $RPQ = -RQP$  is anti-clockwise rotation through $2\pi/3$ about the line 
$[-\sqrt{3}i, \sqrt{3}i]$.  Using line matrices as above, we can now easily write down the 
corresponding representation in $\SL$:

\begin{align*}
P & = M([\z, -\z])
    = \frac{i}{2\z} \begin{pmatrix} 0 & 2\z^2 \\ 2 & 0 \end{pmatrix}
    = \begin{pmatrix} 0 & i \z \\ i/\z & 0\end{pmatrix}\\
Q & = M([i\z, -i\z])
    = \frac{1}{2\z} \begin{pmatrix} 0 & -2\z^2 \\ 2 & 0 \end{pmatrix}
    = \begin{pmatrix} 0 & -\z \\ 1/\z & 0 \end{pmatrix} \\
R & = M([1, -3])
    = \frac{i}{4} \begin{pmatrix} -2 & 6 \\ 2 & 2 \end{pmatrix}
    = \begin{pmatrix} -i/2 & 3i/2 \\ i/2 & i/2 \end{pmatrix}.
\end{align*}
Let $K = RQP$.  Then,
$$
K = \begin{pmatrix} -1/2 & -3/2 \\ 1/2 & -1/2 \end{pmatrix}, \quad
K^3 = \begin{pmatrix}1 & 0 \\ 0 & 1 \end{pmatrix},
$$
 so that as expected, $K$ is a anticlockwise rotation about $[-\sqrt{3}i, \sqrt{3}i]$ by $2\pi/3$.

Note that as matrices in $\SL$, $P^2 = Q^2 = -\rm {id}$ and $PQ = -QP$. As isometries of $\HH^3$, the signs are irrelevant. We could have chosen $K = RPQ$ in which case $K^3 = -\rm {id}$ but see Remark~\ref{signchoices} below.
We denote the group generated by $P,Q,R$ by $G_{\U}(\z)$ and the corresponding representation $\pi_1(\U) \to \SL$ by $\rho_{\U}(\xi)$.

 \subsubsection{The large coned ball $\S$}\label{sec:largeball}

To relate $\pi_1(\U)$ to $\pi_1(\S)$, start with two oriented axes $A_0, A_1$ about each of which we have order $3$ anticlockwise rotations $K_0,K_1$, measured  with respect to the orientation of the axes. Let $C$ denoted the common perpendicular between $A_0$ and $ A_1$, oriented from $A_0$ to $A_1$. We denote this configuration, which is clearly well defined up to isometry, by $\C \F$.
As described in
\ref{sec:mainhandlebody}, 
$\C \F$ has a further $\ZZ_2 \times \ZZ_2$ group of symmetries generated by 
 the $\pi$-rotations $P,Q$ with axes through the  mid-point of $C$: precisely, let $\Pi$ be the plane through  the mid-point of $C$ and orthogonal to $C$. Then 
 the axes of $P,Q$ are the two lines in $\Pi$ which bisect the angles between the projections of   $\Ax K_0, \Ax  K_1 $ onto $\Pi$, chosen so that the angle bisected by  $ \Ax P$ is that between the projection of the lines  $\Ax K_0, \Ax  K_1 $  with the same (say outward) orientation.
 
This choice of $P$ ensures that $PK_0P^{-1} = K_1$ while $QK_0Q^{-1} = K_1^{-1}$. 
Also $PQ$ is the order $2$ rotation about $C$, and $PQK_i Q^{-1}P^{-1} = K_i^{-1}, i = 0,1$.
As in Section~\ref{sec:mainhandlebody}, 
$\U = \S/(\ZZ_2 \times \ZZ_2)$ and we can take $\pi_1(\U)$ to be the $(P,Q,R)$-group defined in~Section~\ref{sec:basicconfig1}. 
In terms of  $(P,Q,R)$, the 
generators of $\pi_1(\S)$ are $K_0 =  RQP, K_1 = PK_0P^{-1}$.
Thus
\begin{equation*}
K_0 =  -\begin{pmatrix} 1/2 & 3/2 \\ -1/2 & 1/2 \end{pmatrix}, \quad
K_1 =- \begin{pmatrix} 1/2 & -\z^2/2 \\ 3/2\z^2 & 1/2 \end{pmatrix}.
\end{equation*}

In terms of generators for $ \pi_1(\dd \S)$, we have additionally 
$ K_2 = QK_0Q^{-1}, K_3 = RK_0R^{-1}$ where 
\begin{equation*}
K_2 =  -\begin{pmatrix} 1/2 & \z^2/2 \\ -3/2\z^2 & 1/2 \end{pmatrix}, \quad 
K_3 = -\begin{pmatrix} 1/2 & -3/2 \\ 1/2 & 1/2 \end{pmatrix}
\end{equation*}
so that $K_0 K_3 K_1 K_2 = \rm{id}$.

 We denote  the group generated by $K_0,K_1$ by $G_{\S}(\z)$ and the corresponding representation $\pi_1(\S) \to \SL$ by $\rho_{\S}(\xi)$.
From now on, we frequently drop the subscript and refer to $K_0$ as $K$.

\subsubsection{\textbf{The handlebody $\H$}}\label{sec:handlebody}

Observe that the generator $X \in \pi_1(\H)$ projects to the loop $K_0K_1$ in $\H /\O$. (This latter is  a loop in $\dd \H /\O$ which separates one of each pair of the cone points of $K_0,K_1$ from the other pair.)
We arrange that the action of $\O$ is induced by conjugation by $K_0^{-1} = K^{-1}$, so the generators of $\pi_1(\H)$ can be written in terms of the generators of $\pi_1(\S)$ as $X = K_0 K_1$, $Y = K_0^{-1}XK_0 = K_1 K_0$. Thus we have:
$$
K^{-1} X K = Y, \quad
K^{-1} Y K = (XY)^{-1}, \quad
K^{-1} (XY)^{-1} K = X.
$$
Using the formulae from the previous section, this gives
$$
X = \begin{pmatrix} 
    \frac{9}{4\z^2} + \frac{1}{4} &
    -\frac{\z^2}{4} + \frac{3}{4} \\
    \frac{3}{4\z^2} - \frac{1}{4} &
    \frac{\z^2}{4} + \frac{1}{4} 
    \end{pmatrix},\quad
Y = \begin{pmatrix} 
    \frac{\z^2}{4} + \frac{1}{4} &
    -\frac{\z^2}{4} + \frac{3}{4} \\
    \frac{3}{4\z^2} - \frac{1}{4} &
    \frac{9}{4\z^2} + \frac{1}{4}
    \end{pmatrix}.
$$
In particular this reveals the relation between the parameter $\z$ and $x$:
\begin{equation}\label{eqn:paramreln}
x= \tr X = \tr Y = \tr XY = \frac{\z^2}{4}+\frac{9}{4\z^2}+\frac{1}{2}.
\end{equation}
We denote the group generated by $X,Y$ by $G_{\H}(\z)$ and the corresponding representation $\pi_1(\H) \to \SL$ by $\rho_{\H}(\z)$; we explain in Section~\ref{sec:whichparameter}  why up to conjugation $\rho_{\H}(\z)$ in fact depends only on $x$.
 
 \begin{remark} \label{signchoices}   \rm{In the above discussion, we made choices of sign so that $K^3 = \mbox{\rm{id}}, X = K_0K_1$ (where $K=K_0$ as above).   To compute the discreteness locus of a family of representations only  requires looking in $\PSL$, however for computations involving  traces  we need a lift to $\SL$.
 
 By~\cite{Culler},  any $\PSL$  representation of a Kleinian group can be lifted  to $\SL$ provided there are no elements of order $2$; in particular  this applies to $\PSL$ representations of $\pi_1(\S)$ and $ \pi_1(\H) $.  
 Since the product of the three generating loops corresponding to $X,Y, Z$ is the identity in $\pi_1(\H)$,  we should make a choice of lift in which $XYZ = \rm{id}$ in $\SL$.  
 We could choose the element $K$ which represents the  $3$-fold symmetry $\O$ to be  such that either  $K^3 =   \rm{id}$ or 
 $K^3 = -\rm{id}$; however since we intend to work with   representations of $\pi_1(\S) \to \SL$ we should make the choice $K^3 =   \rm{id}$ because in the quotient orbifold $\S$, $K$ corresponds to a loop round  an order $3$ cone axis.

In the representation we have written down we achieve    $K^3 = \rm{id}$ with  the choice
  $K =  RQP = \begin{pmatrix} -1/2 & -3/2 \\ 1/2 & -1/2 \end{pmatrix}$. 
It is easy to check that taking $K^3 = \rm{id}$, if we let $X = K_0K_1$ we get $XYZ = \rm {id}$ as required, but if we choose $X = -K_0K_1$ we get $XYZ = -\rm {id}$ which is wrong.
}
 \end{remark}

 \subsubsection{\textbf{The singular solid torus $\T$.}}\label{sec:solidtorus}
 
Finally we discuss the associated singular solid torus  $\T$, which is constructed in a standard way from the $(P,Q,R)$-group. We do not logically need to use $\T$ in our further development, however as explained in Section~\ref{sec:torustree},   in practice we used $\T$ for computations, moreover the interpretation of the problem in the more familiar setting of a torus with a cone point may be helpful.

The boundary $  \dd \U$  is a sphere with $4$ cone points  $x_P, x_Q,x_R$ and $ x_K$ corresponding to $P,Q, R$ and $K = RQP$.  Thus we can take as generators of $\pi_1(\T)$ the loop $B=PQ$ separating $x_P, x_Q$ from $x_R, x_K$, and the loop $A = RQ$ separating $x_R, x_Q$ from $x_P, x_K$. Since $P,Q$ have a common fixed point, $B$ is an order $2$ elliptic, while since the axes of $R,Q$ are (generically) disjoint, $A$ is a loxodromic whose axis extends the common perpendicular to $ \Ax R$ and $ \Ax Q$.

Using the formulae above for the $(P,Q,R)$-group we compute:
\begin{equation*}
RQ = A    = \begin{pmatrix} 3i/2\z & i\z/2 \\ i/2\z & -i\z/2 \end{pmatrix},  \quad
PQ= B  = \begin{pmatrix} i & 0 \\ 0 & -i\end{pmatrix},
\end{equation*} \quad
so that \begin{equation}\label{eqn:torustraces}
\tr A   = \frac{3i}{2\z} - \frac{i\z}{2}, \quad
\tr B =  0, \quad
\tr AB = -\frac{\z}{2} - \frac{3}{2\z}. 
\end{equation}

Note that $AB = RP$ and $A^2 = -K_0K_1, B^2=-\rm{id}$.  
We also deduce that 
\begin{equation*}
ABA^{-1} B^{-1} = [A, B] = \begin{pmatrix} 1/2 & -3/2 \\ 1/2 & 1/2 \end{pmatrix},  \\ \ 
\mbox{{\rm so that }}
\Tr {[A,B] }= 1.
\end{equation*}

Note that $\Tr A \in [-2,2]$ if and only if $|\z| = \sqrt 3$ or $\z = it$ with 
$ 1 \leq |t| \leq \sqrt 3$,
 justifying the above remark that generically $A$ is loxodromic. 
Note also that $A^2 = -K_0K_1 $   is consistent with the direct computation using \eqref{eqn:torustraces}
 that $\Tr (A^2) = -( \frac{\z^2}{4}+\frac{9}{4\z^2}+\frac{1}{2})$.  Also note that $ [A, B] = -K^2$, so that the commutator is rotation by $ 4\pi/3$ about $ \Ax K$.  Since $\Tr K^2 = (\Tr K)^2 -2$ we find also that $\Tr {[A,B]} =  1$ independently  of the choice of sign for $K$. This is consistent with  $\Tr {[A,B]} = -2 \cos (2\pi/3)$,  the sign being negative by analogy  with the well known fact that for any representation of a once punctured torus group for which the commutator is parabolic, we have  $\Tr  {[A,B]}= -2 $.
 
 We denote  the group generated by $A,B$ by $G_{\T}(\z)$ and the corresponding representation $\pi_1(\T) \to \SL$ by $\rho_{\T}(\xi)$.
 
 \begin{remark} \label{signchoices1}   \rm{Once again there are questions of sign which this time are a little more subtle. If   $\a \in \PSL$ corresponds to an element of order $2$ in $\pi_1(M)$, then the corresponding representation cannot be lifted to $\SL$, because for non-trivial $\a \in \SL$, necessarily  $\a^2 = -\rm{id}$, see~\cite{kra} and \cite{Culler}. 
Since in $\pi_1(\T)$ the element $B^2$ is trivial,  a $\PSL$ representation of  $\pi_1(\T)$ cannot be lifted to $\SL$.
Nevertheless,  we can as above write down a group in  $\SL$ which projects to a $\PSL$ representation for $\pi_1(\T)$. See Section~\ref{sec:torustree} for further discussion on this point.
} \end{remark}

 \subsubsection{More on the configuration for the large coned ball $\S$}\label{sec:morelargeball}
 
 The relation~\eqref{eqn:paramreln} can be given a geometrical interpretation in terms of the perpendicular distance between the axes of $K_0,K_1$ which sheds light on the symmetries of  the configuration  $\C \F$ in Section~\ref{sec:largeball}.
To measure complex distance, we use the conventions spelled out in detail in~\cite{ser-wolp} Section 2.1. The signed complex distance $ {\bf d}_{ \a} (L_1,L_2)$ between two oriented lines $L_1,L_2$ along their oriented common perpendicular $\a$ is defined as follows. 
The signed real distance $  d_{  \a} (L_1,L_2)$  is the positive real hyperbolic distance between $L_1,L_2$  if $\alpha$ is oriented from $L_1$ to $L_2$ and its negative otherwise. Let ${\bf v_i}, i=1,2$ be unit vectors to $L_i$ at the points $L_i \cap \a$ and let $\bf w_1$ be the parallel translate of $\bf v_1$ along $\a$ to the point $ \a \cap L_2$.
Then $  {\bf d}_{ \a} (L_1,L_2)= \delta_{\bf \a} (L_1,L_2)  + i\theta$ where $\theta$ is the angle, mod $2\pi i$, from $\bf w_1$ to $\bf v_2$ measured anticlockwise  in the plane spanned by  $\bf w_1$ to $\bf v_2$ and oriented by $\a$.

Let 
$\sigma$ be the  signed complex distance from the \emph{oriented} axis $\Ax  K_0 $ to 
 the oriented axis $ \Ax  K_1 $, measured along the common perpendicular $C$  oriented  from $\Ax  K_0 $ to 
$ \Ax  K_1$. Then $\Ax  K_0 ,  \Ax  K_1$ together with  $\Ax  K_0K_1 $ form the alternate sides of a right angled skew hexagon whose other three sides are the common perpendiculars between the three axes taken in pairs. The cosine formula gives $\sigma$  in terms of the complex half translation lengths $\lambda_0, \lambda_1, \lambda_2$  of $K_0, K_1$ and $K_0K_1$ respectively. To get the sides oriented consistently round the hexagon we have to reverse the orientation of $ \Ax  K_0$  so that the complex distance $\s$  should be replaced by $ \s' = \s+ i \pi$ and $\lambda_0$  by $ \lambda'_0  = -\lambda_0 $, see~\cite{ser-wolp}, so the   formula  gives 
$$ \cosh \sigma' = \frac{\cosh \lambda_2 - \cosh  \lambda'_0 \cosh  \lambda_1}{ \sinh  \lambda'_0 \sinh  \lambda_1} .$$
 
As in~\ref{sec:handlebody}, we have $X  = K_0K_1$ so
 $ x = \tr K_0K_1 = 2 \cosh \lambda_2$
while for $i = 0,1$ we have $ \cosh \lambda_i = \cos 2 \pi/3 = -1/2$ and $\sinh \lambda_i = i\sin 2 \pi/3 = i \sqrt 3/2$.  (Note that since $K_0, K_1$ are conjugate we should take $\lambda_0= \lambda_1$ so the possible additive ambiguity of $i\pi$  in the definition of the $\lambda_i$  does not change the resulting equation.) Substituting, we find 
\begin{equation}\label{eqn:cxdist}
 -\cosh \sigma = \frac{x/2 - 1/4}{( \sqrt {3}/2)^2} = \frac{2x-1}{3}.\end{equation}

 We can also relate $\s$ directly to our parameter $\z$.  By construction $ \Ax  K_0$ is the oriented line
 $[-\sqrt 3 i, \sqrt 3i]$, while $K_1 = PK_0P^{-1}$ so that $ \Ax  K_1$ is the oriented line
 $[  i\z^2/\sqrt  3,  -i\z^2/\sqrt  3]$ and   $C$ is the oriented line from $\infty$ to $0$. 
 Thus the real part of the hyperbolic distance from $ \Ax  K_0$ to $ \Ax  K_1$ is $2\log \sqrt 3 /|\z|$ and the anticlockwise
 angle, measured in the plane oriented \emph{downwards} along the vertical axis $C$, is $-(\pi + 2\arg \z) $.  
 Hence $$\sigma = 2\log \sqrt 3 /|\z| - 2i\arg \z - i\pi= 2 \log \frac{\sqrt 3}{\z}.$$ 
  Comparing  to~\eqref{eqn:cxdist}, we find
   $$ \biggl [  \biggl(\frac{\sqrt 3}{\z}\biggr)^2 + \biggl(\frac{\z}  {\sqrt 3}\biggr)^2\biggr ] = 2 \cosh (\sigma + i \pi) =\frac{2(2x-1)}{3}$$
     or 
   \begin{equation}\label{eqn:paramreln1}  x -1/2 = \frac{3}{4}   \biggl [  \biggl(\frac{\sqrt 3}{\z}\biggr)^2 + \biggl(\frac{\z}  {\sqrt 3}\biggr)^2\biggr ]   \end{equation}
 recovering and  giving a more satisfactory geometrical meaning to~\eqref{eqn:paramreln}.

 \subsubsection{Dependence on $x$ versus $\xi$.}\label{sec:whichparameter}
 It is not  perhaps  immediately obvious why the groups $G_{\S}(\xi), G_{\H}(\xi)$ as defined above depend up to conjugation only on our original parameter $x$. This is clarified by the above discussion, because up to conjugation $G_{\S}(\xi)$ depends only on the configuration $\C\F$ and hence on $\sigma$ which is related to $x$ as in~\eqref{eqn:cxdist}. An alternative way to see this is the discussion on computing traces in Section~\ref{sec:traces}. Thus from now on, we shall alternatively write 
$G_{\S}(x), G_{\H}(x)$ in place of $G_{\S}(\xi), G_{\H}(\xi)$.

   \subsubsection{Symmetries}\label{sec:symmetries}
   The discussion in~\ref{sec:morelargeball} gives  insight into various symmetries of the parameters $x$ and $\z$.
Equation~\eqref{eqn:paramreln} shows that the map $ \z \mapsto x$ is a $4$-fold covering with branch points at $ \z = \pm\sqrt{3},  \z = \pm i\sqrt{3}$  and $z = 0,\infty$.
Correspondingly, we have a Klein  $4$-group $\ZZ_2 \times \ZZ_2$ of symmetries which change $\z$ but not $x$:
\begin{enumerate}
\item  replacing $\z$ by $-\z$ leaves the basic construction unchanged but the line matrices defining $P,Q$ change sign.

\item  replacing $\z$ by $-3/\z$  is an order $2$ rotation about the axis  $[-\sqrt 3i, \sqrt 3 i]$. This fixes $K_0$ and moves $K_1$ into a position on the opposite side of $K_0$ along the vertical line $C$. This changes nothing other than the position we choose for the basic configuration in Section~\ref{sec:basicconfig1}. Note however that 
the line matrices defining $P,Q$ change sign.
\end{enumerate}

There is also a symmetry which changes $x$ as well as $\z$. 
 Say we fix the orientation of one of the two axes $\Ax K_0,  \Ax  K_1 $ while reversing the other. On the level of the  configuration $\C \F$ from~\ref{sec:largeball},  this interchanges $P$ and $Q$. Since  $PK_0P^{-1} = K_1$ while $QK_0Q^{-1} = K_1^{-1}$, 
 this is equivalent to  fixing the orientation of one of the two axes $ \Ax  K_0 ,  \Ax  K_1 $ while reversing the other.   This symmetry interchanges the marked group $P,Q, R$ with the marked group $Q,P, R$, so that one group is discrete if and only if so is the other.
  In terms of our parameters,  the complex distance $\sigma$ between the axes changes to $ \sigma + i\pi$, so that $\cosh \sigma \mapsto - \cosh \sigma$ giving the symmetry $(x -1/2) \mapsto -(x -1/2)$ of  \eqref{eqn:cxdist}. Note that the diagonal slice of the Bowditch set $\Delta \cap \B$ does not possess this symmetry.
 Interchanging $P$ and $Q$  is  induced by the map $ \z \mapsto i\z$; more precisely this map sends $P$ to $Q$ and $Q$ to $-P$.  This clearly  induces the same symmetry in equation \eqref{eqn:paramreln}.  Note that by the definition, in this symmetry $R$ remains unchanged.

 On  the level of the torus group $\pi_1(\T)$, we have by definition  $RQ= A$, $PQ=B$ so that $AB = RP$. Thus sending $P$ to $Q$ and $Q$ to $-P$ while fixing $R$ sends $B$ to $-B$ and  $A$ to $-AB$. (Recall that on the level of matrices, $PQ = -QP$.) The symmetry should therefore replace the trace triple $(\Tr A, \Tr B, \Tr AB)$  by the triple $(-\Tr AB, -\Tr B, \Tr A)$.
It is easily checked from~\eqref{eqn:torustraces} that this is exactly the change effected by $ \z \mapsto i\z$.

Finally, we have the  symmetry of complex conjugation induced by $ x \to \bar x$ or equivalently $ \z \mapsto \bar \z$. This sends $\sigma \mapsto \bar \sigma$ thus replacing  $G_{\H}(x)$ by a conjugate group in which the distance between $ \Ax  K_0 $ and $ \Ax  K_1 $  is unchanged but the angle measured along their common perpendicular changes sign. Clearly these are different groups but one is discrete if and only if the same is true of the other. 
 
 The diagonal slice of the Bowditch set obviously also enjoys the symmetry by conjugation, however, that is its only symmetry. In particular   $(x,x,x) \to (-x,-x,-x)$  is not  a symmetry and  the corresponding $\SL$ representations project to different representations of $F_2$ into $\PSL$. This is because any two distinct 
lifts of a  representation from $\PSL$ to $\SL$  differ by multiplying  exactly two of the parameters $x,y,z$ by $-1$.  
The allowed replacement $X \to -X$ and $Y \to -Y$
gives the group $(-x,-x,x)$ with parameters which are not in the diagonal slice $\Delta$.

 The symmetries can be seen in our plots by comparing 
 Figure~\ref{Diagonal-BQ}, the Bowditch set for the triple $\phi_{(x,x,x)}$  in the $x$-plane,  with the right hand frame of Figure~\ref{fig:BQ-sets-comparison}, which shows the same set in the 
 $\z$-plane.  Note the symmetry of complex conjugation in both pictures. In addition, Figure~\ref{fig:BQ-sets-comparison} is invariant under the maps $ \z \mapsto -\z$ and $ \z \mapsto -3/\z$, neither of which are seen in  Figure~\ref{Diagonal-BQ}. Thus the upper half plane in
Figure~\ref{fig:BQ-sets-comparison} is a $4$-fold covering of the upper half plane in Figure~\ref{Diagonal-BQ}: as is easily checked from~\eqref{eqn:paramreln},  the imaginary axis in Figure~\ref{fig:BQ-sets-comparison} maps to the negative real axis in 
Figure~\ref{Diagonal-BQ} while the real axis in Figure~\ref{fig:BQ-sets-comparison} maps to the positive real axis in 
Figure~\ref{Diagonal-BQ}. In particular, note the following branch points and special values:
if $x=3$, then $\z=\pm 1, \pm 3$; if $x=2$, then $\z=\pm\sqrt{3}$; if $x=-1$, then $\z=\pm \sqrt{3}i$; if $x=-2$, then $\z=\pm i, \pm 3i$.

Finally, the symmetry $(x -1/2) \mapsto -(x -1/2) $ is not visible in either picture because it does not preserve the property of lying in the Bowditch set. As we shall see later, this symmetry is visible in pictures of the discreteness locus, see the right hand frame of Figure~\ref{Figs/Riley-Ray-BQ} below.

\section{Discreteness} \label{sec:discrete}

 We now turn to the question of finding those values of the parameter $x$ for which the group 
$\langle X,Y \rangle$ is free and discrete.  
Let $\D_{\S}, \D_{\H} \subset \CC$ denote the subsets of the complex $x$-plane on which the groups $G_{\S}(x), G_{\H}(x)$ respectively are discrete and the corresponding representations are faithful, so that in particular,  $G_{\H}(x)$ is free. (See Section~\ref{sec:whichparameter} for the replacement of $G_{\S}(\xi), G_{\H}(\xi)$ by 
$G_{\S}(x), G_{\H}(x)$.)
 We first  show  that $ \D_{\S}=\D_{\H} $.

We begin with the easy observation that since all the groups in Section~\ref{sec:basicconfig} are commensurable, they are either all discrete or all non-discrete together:
\begin{lemma} \label{lem:commensurable} Suppose that $G, H$ are subgroups of $\PSL$ with $G \supset H$ and that $[G:H]$ is finite. Then $G$  is discrete if and only if  the same is true of $H$.\end{lemma}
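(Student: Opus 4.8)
The plan is to prove this in the standard way, using the fact that a subgroup of $\PSLTwoC$ is discrete precisely when it contains no sequence of distinct elements converging to the identity.

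First I would treat the easy direction: if $G$ is discrete, then so is its subgroup $H$, since any sequence of distinct elements of $H$ converging to $\mathrm{id}$ would be such a sequence in $G$, contradicting discreteness of $G$. (Equivalently, a subspace of a discrete space is discrete.)

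For the converse, suppose $H$ is discrete but $G$ is not, so there is a sequence $g_n \in G$ of distinct elements with $g_n \to \mathrm{id}$ in $\PSLTwoC$. Since $[G:H] = k < \infty$, write $G$ as a union of $k$ cosets $h_1 H, \dots, h_k H$; by the pigeonhole principle infinitely many $g_n$ lie in a single coset $h_j H$, so after passing to a subsequence we may write $g_n = h_j u_n$ with $u_n \in H$ all distinct (distinctness is preserved since left multiplication by $h_j$ is injective). Then $u_n = h_j^{-1} g_n \to h_j^{-1}$, so the sequence $(u_n)$ has a convergent subsequence; hence for large $m,n$ the elements $u_m u_n^{-1} \in H$ are distinct (since the $u_n$ are distinct and inversion/multiplication are continuous, one can extract a subsequence along which $u_m u_n^{-1} \neq \mathrm{id}$) yet $u_m u_n^{-1} \to \mathrm{id}$, contradicting the discreteness of $H$. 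An even cleaner route: $N = \bigcap_{i=1}^k h_i H h_i^{-1}$ is a finite-index normal subgroup of $G$ contained in $H$, hence discrete, and a group containing a finite-index discrete subgroup is itself discrete --- indeed $G$ is then a finite union of cosets of $N$, each of which is discrete and closed, so $G$ is discrete as a finite union of closed discrete sets that is locally finite near $\mathrm{id}$.

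There is essentially no obstacle here; this is a folklore fact. The only point requiring a word of care is ensuring that the elements extracted in the non-discrete case remain \emph{distinct} (not merely that moduli or traces coincide), which is why passing to a single coset via the finite-index hypothesis is the crucial step. I would keep the write-up to a few lines along the lines of the first argument above.
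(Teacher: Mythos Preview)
Your proof is correct and uses the same core idea as the paper's, namely the pigeonhole principle on the finitely many cosets of $H$ in $G$. The only difference is that the paper phrases non-discreteness geometrically---as accumulation of orbit points $G\cdot O$ in a compact subset of $\HH^3$---while you work directly with sequences of group elements converging to the identity; these are equivalent characterizations and the argument is otherwise identical.
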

\begin{proof} If $G$ is discrete, clearly so is $H$. Suppose that  $H$ is discrete but $G$ is not. Then   infinitely many distinct  orbit points   in $G \cdot O$ accumulate in some compact set $D \subset \HH^3$. Label the cosets of  $[G:H]$ as $g_1H \ldots  g_kH$. Then  for some $i$ there are infinitely many points 
$g_i h_r \cdot O  \in D$, which gives infinitely many distinct points $h_r \in g_i^{-1}D$. This contradicts discreteness of $H$. \end{proof}

\begin{lemma} \label{lem:faithfultogether} The representation  $\rho_{\S}(x) \co \pi_1(\S) \to G_{\S}(x)$ is faithful if and only if the same is true of  $ \rho_{\H}(x) \co \pi_1(\H) \to G_{\H}(x)$. 
\end{lemma}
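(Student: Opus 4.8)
The plan is to observe that, under the natural identifications, $\rho_{\H}(x)$ is simply the restriction of $\rho_{\S}(x)$ to a subgroup of index $3$, and then to reduce the non-trivial direction to the fact that $\pi_1(\S)\cong\ZZ_3\ast\ZZ_3$ has no non-trivial finite normal subgroup.

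First I would set up the identifications carefully. Since $\S$ is a ball with two disjoint cone axes of order $3$ (Section~\ref{sec:mainhandlebody}), its orbifold fundamental group is $\pi_1(\S)\cong\ZZ_3\ast\ZZ_3=\langle K_0,K_1\mid K_0^3=K_1^3=1\rangle$, with $K_0,K_1$ meridians of the two axes. The covering $\H\to\S=\H/\O$ exhibits $\pi_1(\H)$ as the normal subgroup of index $3$ which is the kernel of the homomorphism $\pi_1(\S)\to\ZZ_3$, $K_0\mapsto 1$, $K_1\mapsto-1$; this kernel is free of rank $2$ and, as recorded in Section~\ref{sec:handlebody}, is generated by $X=K_0K_1$ and $Y=K_1K_0$. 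Because the matrices of Section~\ref{sec:basicconfig1} satisfy $K^3=\mathrm{id}$ with $K=K_0$, the assignment $K_i\mapsto K_i$ really does define a homomorphism $\rho_{\S}(x)\colon\pi_1(\S)\to\SL$, and the inclusion $\pi_1(\H)\hookrightarrow\pi_1(\S)$ intertwines $\rho_{\H}(x)$ and $\rho_{\S}(x)$ (on the generators, $\rho_{\S}(x)(K_0K_1)$ is precisely the matrix $\rho_{\H}(x)(X)$, and similarly for $Y$); in other words $\rho_{\H}(x)=\rho_{\S}(x)|_{\pi_1(\H)}$.

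Granting this, one implication is immediate: the restriction of an injective homomorphism to a subgroup is injective, so if $\rho_{\S}(x)$ is faithful then so is $\rho_{\H}(x)$. For the converse I would argue as follows. Assume $\rho_{\H}(x)$ is faithful and put $N=\ker\rho_{\S}(x)\triangleleft\pi_1(\S)$. Then $N\cap\pi_1(\H)=\ker\rho_{\H}(x)=\{1\}$, so $N$ injects into $\pi_1(\S)/\pi_1(\H)\cong\ZZ_3$ and hence $|N|\in\{1,3\}$. If $|N|=3$, comparing indices over $\pi_1(\H)$ gives $\pi_1(\H)N=\pi_1(\S)$, so $\pi_1(\S)/N\cong\pi_1(\H)/(\pi_1(\H)\cap N)\cong\pi_1(\H)\cong F_2$; but $\pi_1(\S)$ is generated by the order-$3$ elements $K_0,K_1$, whose images in the torsion-free group $\pi_1(\S)/N$ must then be trivial, forcing $\pi_1(\S)/N=\{1\}$, a contradiction. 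Therefore $N=\{1\}$ and $\rho_{\S}(x)$ is faithful. (Alternatively one can invoke the Kurosh subgroup theorem: every finite subgroup of a free product is conjugate into a free factor, and no conjugate of a proper free factor is normal.)

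The argument is short, and the only real obstacle is conceptual bookkeeping rather than any estimate: one must correctly recognise $\pi_1(\H)$ as the index-$3$ \emph{normal} subgroup of $\pi_1(\S)\cong\ZZ_3\ast\ZZ_3$ generated by $K_0K_1$ and $K_1K_0$, so that $\rho_{\H}(x)$ genuinely is a restriction of $\rho_{\S}(x)$, and then apply the elementary fact that a free product of two non-trivial groups has no non-trivial finite normal subgroup.
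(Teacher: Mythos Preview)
Your proof is correct. The setup---identifying $\pi_1(\H)$ with the index-$3$ normal subgroup of $\pi_1(\S)\cong\ZZ_3\ast\ZZ_3$ generated by $K_0K_1$ and $K_1K_0$, so that $\rho_{\H}(x)$ is the restriction of $\rho_{\S}(x)$---is exactly what the paper does, and the forward implication is the same trivial observation.

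For the converse the paper argues more concretely: given $g\in\ker\rho_{\S}(x)$ with $g\notin\pi_1(\H)$, write $g=k_0^{e}h$ with $e=\pm1$ and $h\in\pi_1(\H)$; then $\rho_{\H}(x)(h)=\rho_{\S}(x)(k_0^{-e})$, so $\rho_{\H}(x)(h^3)=\rho_{\S}(x)(k_0^{-3e})=\mathrm{id}$, forcing $h=1$ by torsion-freeness of $F_2$ and hence $\rho_{\S}(x)(k_0^{e})=\mathrm{id}$, a contradiction. Your route is a cleaner structural repackaging of the same content: you observe that $N=\ker\rho_{\S}(x)$ meets $\pi_1(\H)$ trivially, hence has order dividing $3$, and then rule out $|N|=3$ because $\pi_1(\S)/N$ would be isomorphic to $F_2$ yet generated by torsion elements. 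Both arguments ultimately hinge on the torsion-freeness of $F_2$; yours has the advantage of isolating the group-theoretic principle (no nontrivial finite normal subgroup in a free product of nontrivial groups), while the paper's coset computation is slightly more hands-on and avoids any appeal to isomorphism theorems.
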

\begin{proof} 
Note that $\pi_1(\S)$  is isomorphic to  $\ZZ/3\ZZ * \ZZ/3\ZZ = <k_0, k_1 | k_0^3 = k_1^3 = \rm{id}>$, while
$\pi_1(\H)$ is the subgroup of $\pi_1(\S)$ generated by $k_0 k_1$ and $k_1 k_0$
and is isomorphic to a free group of rank $2$. By construction, $\rho_{\H}(x)$ is the restriction of $\rho_{\S}(x)$  to $\pi_1(\H)$.
Thus, if $\rho_{\S}(x)$ is faithful, then so is $\rho_{\H}(x)$.

Now $\pi_1(\H)$ has index three in $\pi_1(\S)$
 and $ \pi_1(\S) = \pi_1(\H) \cup k_0 \pi_1(\H) \cup k_0^{-1} \pi_1(\H)$.
Suppose that $\rho_{\H}(x)$ is faithful but  $\rho_{\S}(x)$ is not.
Then, there exists $g \in \pi_1(S)$ such that $\rho_{\S}(x)(g) = \rm{id}$.
Now $g = k_0^e h$, where $e=\pm  1$ and $h \in \pi_1(\H)$.
Thus $ \mbox{\rm{id} }= \rho_{\S}(x)(g) = \rho_{\S}(x)(k_0^e) \rho_{\H}(x)(h)  $ so that $\rho_{\H}(x)(h^3) = \rho_{\S}(x)(k_0^{-3e}) = \rm{id}$
 contradicting  the assumption that $\rho_{\H}(x)$ is faithful.
\end{proof}

\begin{corollary} \label{discretetogether} The  representations $\rho_{\S}(x), \rho_{\H}(x)$  are faithful and discrete together, that is, $\D_{\S}=\D_{\H} $.
\end{corollary}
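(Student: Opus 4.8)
The plan is to combine the two preceding lemmas with the commensurability observation. The statement asserts that $\D_{\S} = \D_{\H}$, i.e. that $\rho_{\S}(x)$ is faithful and discrete precisely when $\rho_{\H}(x)$ is. Since everything in sight has already been proved, this is simply a matter of assembling the pieces, and the "proof" is essentially one line.

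First I would recall that $G_{\H}(x)$ is by construction a subgroup of $G_{\S}(x)$ of index three (coming from the index-three inclusion $\pi_1(\H) \hookrightarrow \pi_1(\S)$ used to define $\rho_{\H}(x)$ as the restriction of $\rho_{\S}(x)$). By Lemma~\ref{lem:commensurable}, applied in $\PSL$, the image group $G_{\H}(x)$ is discrete if and only if $G_{\S}(x)$ is discrete. Second, by Lemma~\ref{lem:faithfultogether}, the representation $\rho_{\S}(x)$ is faithful if and only if $\rho_{\H}(x)$ is faithful. Combining these two equivalences: $\rho_{\S}(x)$ is faithful and discrete $\iff$ $\rho_{\H}(x)$ is faithful and $G_{\S}(x)$ is discrete $\iff$ $\rho_{\H}(x)$ is faithful and $G_{\H}(x)$ is discrete. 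Since $\D_{\S}$ (resp. $\D_{\H}$) is by definition the set of $x$ for which $\rho_{\S}(x)$ (resp. $\rho_{\H}(x)$) is faithful and discrete, this gives $\D_{\S} = \D_{\H}$.

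There is really no obstacle here; the only minor point to be careful about is the interplay between "discrete" as a property of the image group in $\PSL$ and "faithful and discrete" as a property of the representation. Discreteness of $G_{\S}(x)$ plus faithfulness of $\rho_{\S}(x)$ is the same data as discreteness of $G_{\H}(x)$ plus faithfulness of $\rho_{\H}(x)$, because Lemma~\ref{lem:commensurable} transports discreteness of the image back and forth, while Lemma~\ref{lem:faithfultogether} transports faithfulness back and forth, and these two statements are logically independent so they can be conjoined freely. One should just note explicitly that $[G_{\S}(x):G_{\H}(x)]$ is finite (indeed equals $3$ when $\rho_{\S}(x)$ is faithful, and is at most $3$ in any case) so that Lemma~\ref{lem:commensurable} applies. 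I would write this up as a three- or four-sentence proof chaining the two lemmas, with no computation required.
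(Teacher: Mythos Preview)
Your proposal is correct and is exactly the argument the paper intends: the corollary is stated without proof precisely because it follows immediately by combining Lemma~\ref{lem:commensurable} (discreteness is shared by finite-index sub- and super-groups) with Lemma~\ref{lem:faithfultogether} (faithfulness of $\rho_{\S}(x)$ and $\rho_{\H}(x)$ are equivalent). Your remark that $[G_{\S}(x):G_{\H}(x)]$ is at most $3$ in all cases (so Lemma~\ref{lem:commensurable} applies even before faithfulness is known) is the only point worth making explicit, and you have done so.
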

Thus we may write $\D = \D_{\S}=\D_{\H} $. Our aim is this section is to find $\D \subset \CC$.

\subsubsection{Fundamental domains}\label{sec:dirichlet}
We can make a rough estimate  for $\D $ by exhibiting  a fundamental domain for $G_{\S}(x)$ for sufficiently large $x$. 

\begin{prop}  \label{prop:dirichlet}Writing  $x = u + iv$, the region $\D$ contains the region  outside the ellipse
$(2u-1)^2/25   +   v^2/4  = 1$ in the $x$-plane.
 \end{prop}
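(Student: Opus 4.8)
The plan is to exhibit an explicit fundamental domain for the action of $G_{\S}(x)$ on $\HHH$ when $x$ is large, using the $(P,Q,R)$-generators and isometric spheres, and then read off a sufficient condition on $x$ from the Poincar\'e polyhedron theorem. Concretely, recall from Section~\ref{sec:largeball} that $\pi_1(\S) = \ZZ/3 * \ZZ/3 = \langle k_0, k_1 \mid k_0^3 = k_1^3 = \mathrm{id}\rangle$ and that the generators are $K_0 = RQP$ and $K_1 = PK_0P^{-1}$, with
$$K_0 = -\begin{pmatrix} 1/2 & 3/2 \\ -1/2 & 1/2 \end{pmatrix}, \qquad K_1 = -\begin{pmatrix} 1/2 & -\z^2/2 \\ 3/2\z^2 & 1/2 \end{pmatrix},$$
and also $K_0^{-1} = K_0^2$, $K_1^{-1} = K_1^2$ with analogous matrices. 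Since $\pi_1(\S)$ is a free product of the two cyclic groups $\langle k_0 \rangle$ and $\langle k_1 \rangle$, the natural candidate fundamental domain is the exterior of the four isometric hemispheres $I(K_0), I(K_0^{-1}), I(K_1), I(K_1^{-1})$: one expects $G_{\S}(x)$ to be discrete and the representation faithful whenever the closed half-balls bounded by these four hemispheres are pairwise disjoint (or at worst tangent), since then the Klein--Maskit combination theorem / Poincar\'e's theorem produces exactly the free product structure with the right relations.

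The key computational steps are: (i) compute the centres and radii of the four isometric spheres $I(K_0^{\pm 1})$, $I(K_1^{\pm 1})$ in terms of $\z$ (the isometric sphere of $\left(\begin{smallmatrix} a & b \\ c & d\end{smallmatrix}\right)$ has centre $-d/c$ and radius $1/|c|$); here $K_0, K_0^{-1}$ have $|c| = 1/2$, hence radius $2$ and centres $\pm 1 \cdot(\text{something})$, while $K_1, K_1^{-1}$ have $|c| = 3/(2|\z|^2)$, hence radius $2|\z|^2/3$ and centres proportional to $\z^2$; (ii) impose that the four half-balls are pairwise disjoint, which amounts to a finite list of inequalities of the form $|\,\text{centre}_i - \text{centre}_j\,| \ge r_i + r_j$; (iii) translate these inequalities, via the relation~\eqref{eqn:paramreln} $x = \z^2/4 + 9/(4\z^2) + 1/2$ (equivalently the more symmetric~\eqref{eqn:paramreln1}), into a region in the $x$-plane; (iv) check that the resulting region contains the exterior of the stated ellipse $(2u-1)^2/25 + v^2/4 = 1$, writing $x = u + iv$. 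The ellipse is centred at $u = 1/2$, $v=0$ (matching the natural centre of symmetry $x - 1/2$ from Section~\ref{sec:morelargeball}), with semi-axes $5/2$ and $1$, so the claim is that $\big(\tfrac{\sqrt3}{\z}\big)^2 + \big(\tfrac{\z}{\sqrt3}\big)^2$ having large enough real part forces the spheres apart; this is plausible since large $|x|$ forces $|\z|$ to be either large or small, spreading the $K_0$-spheres and the $K_1$-spheres apart.

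I expect the main obstacle to be step (ii)--(iv): bookkeeping the tangency/disjointness conditions for all $\binom{4}{2}$ pairs simultaneously and then finding the cleanest sufficient condition that yields precisely this ellipse rather than some messier region. One subtlety is that $I(K_0)$ and $I(K_0^{-1})$ (resp.\ $I(K_1), I(K_1^{-1})$) are paired by an \emph{elliptic} of order $3$, not a loxodromic, so one must be slightly careful that the side-pairing hypotheses of Poincar\'e's theorem are met — the cycle condition around the order-$3$ axes needs the angle to work out to $2\pi/3$, which it does by construction since $K_0^3 = K_1^3 = \mathrm{id}$. It may be cleanest to invoke the Klein combination theorem directly: the $K_0$-half-balls being disjoint from the $K_1$-half-balls already gives the free product, with each $\langle K_i\rangle$ automatically discrete (finite). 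Once the region in the $\z$-plane is identified, pushing it forward under~\eqref{eqn:paramreln1} and comparing with the ellipse is a routine (if fiddly) estimate, which, as the proposition only claims containment of the ellipse exterior, does not need to be sharp.
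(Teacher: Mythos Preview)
Your overall plan---work with $G_{\S}$, build a combination-type fundamental region from the hemispheres attached to $K_0^{\pm1}$ and $K_1^{\pm1}$, and translate the resulting $\z$-condition into the $x$-plane via~\eqref{eqn:paramreln}---is exactly the paper's strategy, and your isometric spheres are in fact the very hemispheres $H,H',P(H),P(H')$ the paper uses (centres $\pm 1$, radius $2$, and centres $\pm\z^2/3$, radius $2|\z|^2/3$). The genuine gap is in step~(ii). The condition you write down, pairwise disjointness of the four half-balls, is \emph{never} satisfied: not only do $I(K_0)$ and $I(K_0^{-1})$ meet along $\Ax K_0$ (which you notice), but the cross pairs are never disjoint either. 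For $|\z|$ small the $K_1$-balls have radius $2|\z|^2/3$ and centres $\pm\z^2/3$, so they sit well inside both $K_0$-balls; for $|\z|$ large the reverse nesting occurs. Concretely, $|1-\z^2/3|\ge 2+2|\z|^2/3$ fails for all $\z$ by the triangle inequality. So your $\binom{4}{2}$ inequalities produce the empty set, and the Klein combination in the ``two disjoint packets of balls'' form cannot be invoked.

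What the paper does instead exploits the nesting rather than fighting it. For $|\z|<1$ the two small $K_1$-hemispheres lie inside the lens $E^c\cap(E')^c$ (the intersection of the two big $K_0$-balls); equivalently the circle $|z|=|\z|$ separates the outer $K_0$-lune $E\cap E'$ from the inner $K_1$-lens $P(E)\cap P(E')$. This is exactly the hypothesis of the simplest Klein--Maskit combination (or ping-pong): $K_0^{\pm1}$ throws $\{|z|\le|\z|\}$ into $\{|z|>|\z|\}$ and $K_1^{\pm1}$ does the reverse, giving $\langle K_0\rangle * \langle K_1\rangle$ discrete and faithful. If you prefer Poincar\'e, the polyhedron is the big lens with the two small $K_1$-balls removed, with face-pairings $K_0^{\pm1}$ on the outer faces and $K_1^{\pm1}$ on the inner ones; the cycle condition at each axis is $3\cdot(2\pi/3)=2\pi$. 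Either way the single sufficient condition is $|\z|<1$.

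Once you have that, your steps (iii)--(iv) are not fiddly at all: the image of $|\z|=1$ under~\eqref{eqn:paramreln} is $x=\tfrac14 e^{2i\phi}+\tfrac94 e^{-2i\phi}+\tfrac12$, and separating real and imaginary parts gives $(2u-1)=5\cos 2\phi$, $v=-2\sin 2\phi$, which is precisely the stated ellipse. Since the $4$-to-$1$ map $\z\mapsto x$ sends $\{|\z|<1\}$ onto the exterior, no further estimate is needed.
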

\begin{proof} In view of Corollary~\ref{discretetogether}, we can work with
 the large cone manifold $\S$ with generators $K_0, K_1$ of Section~\ref{sec:largeball}.  By construction, $P$, which conjugates $K_0$ to $K_1$,  is $\pi$-rotation about $0+ j |\z|$ in the upper half space $\HH^3$, which is   the mid-point of the common perpendicular between the axes of $K_0$ and $ K_1$.  The axis of $K_0$ is the line through $ j \sqrt 3$ perpendicular to the real axis.  Let $H, H'$ be the hemispheres which meet $\RR$ orthogonally at points $ -3, 1$ and $-1,3$ respectively, and let $E, E'$ be the closed half spaces they cut out which contain infinity. Then $H, H'$ intersect in $ \Ax  K_0 $, moreover  the complement in $\HH^3$ of $E \cup E'$ is a fundamental domain for the group $\langle K_0\rangle$ acting on $\HH^3$. Likewise the images of $H,H'$ under $P$ meet along $ \Ax  K_1 $ and the complement of 
$P(E ) \cup P(E')$  is a fundamental domain for  $\langle K_1\rangle$. 
 Since $P(z) = \z^2/z, z \in \CC$,  $P(H),P( H')$ meet $\RR$ orthogonally in points $-\z^2/3, \z^2$ and $\z^2/3, -\z^2$ respectively. 
Thus if $|\z| < 1$ then the circle of radius $|\z|$ separates the regions $ E \cap E'$ and $P(E ) \cap P(E')$. We conclude by Poincar\'e's theorem
 (or a suitable simple version of the Klein-Maskit combination theorem) that in this situation $\langle K_0, K_1\rangle$ is discrete with presentation $\langle K_0, K_1| K_0^3 =  K_1^3 = \rm {id}\rangle$. Thus the representation $\rho_{\S}(x)$ with $ x = x(\z)$ as in~\eqref{eqn:paramreln},  is faithful, and hence $x \in \D$.
 
Suppose that $\z = e^{i\phi}$. Then from \eqref{eqn:cxdist}, $(2x-1)/3 =  \cosh \sigma = 1/2( e^{2i \phi}/3 + 3 e^{-2i \phi} )$ so that $x = u+iv$   lies on the ellipse
 $(2u-1)^2/25 + v^2/4 = 1$ as claimed. \end{proof}

\begin{figure}[ht] 
 \includegraphics[width=6cm]{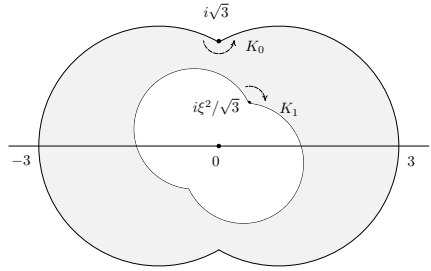}
 \caption{The shaded region illustrates the fundamental domain for $\pi_1(\S)$  acting in its regular set in $\Chat$ when $|\z| >1$, so that $x$ is outside the ellipse of Proposition~\ref{prop:dirichlet}.}\label{fig:dirichletgeneral}
\end{figure}

The configuration when $x \in \RR$ is of particular interest since in this case   $G_{\H}(x)$ is Fuchsian. The ellipse meets the real axis in points $-2,3$ so that $G_{\H}(x)$ is  discrete and the representation is faithful on $(\infty, -2]$ (corresponding to $|\z| >1, \z\in    i\RR$) and 
$[3,\infty)$ (corresponding to $|\z| >1, \z \in    \RR$).  In these two cases the fundamental domains look the same, see Figure~\ref{fig:fuchsianconfigs}.  Note that   the interval $(-2,3)$ is definitely not in $\D$: if  $ -2 < x < 2$ then $K_0K_1   $ is elliptic since
 $x= \Tr K_0K_1   $, while if $ -1 < x < 3$ then  $K_0K_1^{-1}   $ is elliptic since
 $\Tr K_0K_1^{-1}  = 1-x   $, see also Section~\ref{sec:fuchsian}.

In the general case, a fundamental domain can be found by 
a modification of  Wada's   program OPTi ~\cite{OPTi}. 
This program allows one to compute the  limit set and fundamental
 domains for the $PQR$-group $G_{\U}$.  A short python code for doing this is available at \url{http://vivaldi.ics.nara-wu.ac.jp/~yamasita/DiagonalSlice/}

\subsection{The method of pleating rays} \label{sec:pleating}  
To determine $\D$, we  use  the Keen-Series method of pleating rays applied to the large coned sphere $\S$. This is closely analogous to the problem of computing the Riley slice of Schottky space, that is the parameter space of free discrete groups generated by two parabolics, which was solved in~\cite{ksriley,koms}.  

We begin by briefly summarising the elements of pleating ray theory we need.
For more details see various of the first author's papers, for example~\cite{ksriley, chois}.

Suppose that  $G \subset \SL$ is a geometrically finite Kleinian group  with corresponding orbifold  $M = \HH^3/G$  and let $\C/G$ be its convex core, where $\C$ is the convex hull in $\HH^3$ of the limit set of $G$, see~\cite{EpM}. 
 Then  $\dd \C/G$ is a convex pleated surface (see for example~\cite{EpM}) also homeomorphic to $\dd M$. The 
bending of this pleated surface is recorded by means of  a measured geodesic lamination,  the \emph{bending lamination} $\b = \beta(G)$, 
whose support forms the bending lines of the surface   and whose transverse measure records the  total bending angle along short transversals.
 We say  $\beta $ is  \emph{rational} if it is supported on closed curves: note that closed curves in the support of $\beta $ are necessarily simple and pairwise disjoint. If a  bending line is represented by a curve $\gamma \in \pi_1(\S)$, then by definition it is the projection of a geodesic axis to $\dd \C/G$, so in particular $\beta$ contains no peripheral curves in its support. 
Note that any two homotopically distinct non-peripheral simple closed curves on 
$\dd  \S$ intersect. Thus in this case,  $\beta $ is  rational only if its support is a single simple essential non-peripheral  closed curve on $\dd \C/G$.

As above, we parameterise representations $ \rho_{\S}(x) \co \pi_1(\S) \to \SL$ by $x \in \CC$ and denote the image group by $G_{\S}(x)$. From now on, we frequently write $ \rho_{x}$ for $ \rho_{\S}(x)$.

 \begin{definition}
Let $\gamma$ be a homotopy class of simple essential non-peripheral   closed curves on $\dd \S$.
The \emph{pleating ray}  $ \P_{\gamma}$ of   $\gamma$ is the
set of points $ x \in \D$ for which   $ \b(G_{\S}(x)) = \gamma$.  
\end{definition}

Such rays are called \emph{rational pleating rays}; a similar definition can be made for general projective classes of bending lamination, see~\cite{chois}.

The following key lemma is proved in~\cite{chois} Proposition 4.1, see also~\cite{kstop} Lemma 4.6.  The essence is that because the two flat pieces of $\dd \C/G$ on either side of a bending line are invariant under translation along the line, the translation can have no  rotational part.
\begin{lemma}
  \label{lemma:realtrace} If the axis of $g \in G$ is a bending line   of $\dd \C/G_{\S}(x)$, then
$\Tr g(x) \in \RR$.
\end{lemma}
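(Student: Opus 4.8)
The plan is to extract the geometric mechanism behind the statement: a bending line of the pleated surface $\dd\C/G$ lies in the intersection of two flat (totally geodesic) pieces, each of which is invariant under the cyclic group generated by $g$. First I would recall the local structure of $\dd\C/G$ along a bending line $\ell$ which is the projection of the axis $\mathrm{Ax}(g)$: in the universal cover, $\ell$ lifts to a bi-infinite geodesic $\tilde\ell$, and on either side of $\tilde\ell$ the convex hull boundary contains a two-dimensional totally geodesic piece (a flat), so a neighbourhood of $\tilde\ell$ in $\dd\C$ is the union of two half-planes $\Pi_1,\Pi_2$ meeting along $\tilde\ell$ at the exterior dihedral angle prescribed by the transverse measure $\b(\ell)$. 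The element $g$ (a representative of $\gamma$ chosen so that its axis is $\tilde\ell$) preserves $\dd\C$ and fixes $\tilde\ell$ setwise, hence permutes the two local flats; since $g$ translates along $\tilde\ell$ in the direction of its attracting fixed point it cannot interchange the two sides, so $g(\Pi_i)=\Pi_i$ for $i=1,2$.

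Next I would convert the invariance of a half-plane into the reality of the trace. An orientation-preserving isometry of $\HHH$ that preserves a geodesic $\tilde\ell$ and also preserves a totally geodesic half-plane $\Pi_1$ bounded by $\tilde\ell$ must act on $\Pi_1$ as a hyperbolic isometry of that copy of $\HH^2$: it has no rotational part about $\tilde\ell$, because any nontrivial rotation about $\tilde\ell$ moves $\Pi_1$ off itself. Concretely, normalising so that $\tilde\ell$ is the geodesic from $0$ to $\infty$ and $\Pi_1$ is the half-plane over $\RR_{>0}$, the stabiliser of $\tilde\ell$ in $\PSLTwoC$ consists of maps $z\mapsto \lambda^2 z$, and such a map preserves $\Pi_1$ precisely when $\lambda^2\in\RR_{>0}$, i.e. when the complex translation length is real. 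Lifting to $\SLTwoC$, $g$ is conjugate to $\mathrm{diag}(\lambda,\lambda^{-1})$ with $\lambda\in\RR\setminus\{0\}$ (up to sign), whence $\Tr g(x)=\lambda+\lambda^{-1}\in\RR$. I would cite Proposition~4.1 of~\cite{chois} and Lemma~4.6 of~\cite{kstop} for the rigorous version, since the excerpt already flags these as the references, and note that $g$ being on a bending line is in particular loxodromic (it has an axis), so the normalisation is legitimate.

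The main obstacle — and the point that deserves the most care — is justifying that $\tilde\ell$ genuinely sits on a crease between two \emph{genuinely two-dimensional} flat pieces rather than, say, in the interior of a single flat piece or as an isolated leaf of an irrational lamination accumulated by bending lines on both sides. In the situation at hand this is not an issue precisely because $\b(G_{\S}(x))=\gamma$ is \emph{rational} with support the single simple closed curve $\gamma$: the complement of the bending lamination in $\dd\C/G$ is then a union of finitely many totally geodesic pieces with geodesic boundary, and $\ell$ is a boundary leaf of (at most) two of them, each carrying positive area. So I would first invoke the structure theory of the bending lamination for geometrically finite groups (see~\cite{EpM}) to reduce to this clean picture, and only then run the half-plane-invariance argument above. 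One should also remark that the transverse bending measure $\b(\ell)>0$ is automatic along a genuine bending line, but is not actually needed for the conclusion — even a flat (zero-bending) leaf of the pleating locus gives a real trace by the same computation — so no lower bound on the bending angle is required.
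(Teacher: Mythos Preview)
Your proof is correct and follows exactly the approach the paper outlines: the paper does not give a full proof but states that the essence is the invariance of the two flat pieces on either side of the bending line under translation along it, forcing the translation to have no rotational part, and cites \cite{chois} Proposition~4.1 and \cite{kstop} Lemma~4.6. Your detailed expansion of this idea, including the remark that the zero-bending case is also covered, matches the paper's treatment precisely.
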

 Notice that the lemma applies even when the bending angle $\th_{\g}$ along $\g$ vanishes, so the corresponding surface is flat, or when the angle is $\pi$, in which case either $\gamma$ is  parabolic or $G_{\S}(x)$ is Fuchsian. 
 
If $g \in G$ represents a curve $\gamma$ on $\dd \S$,  define the \emph{real trace locus} $\RR_{\g}$ of $\g$ to be the locus of points in $\CC$ for which 
$\Tr g \in (-\infty, -2] \cup [2,\infty)$. By the above lemma, $\P_{\g} \subset \RR_{\g}$.

Our aim is to compute  the locus of faithful discrete representations $\D_{\S}= \D$. In summary, we do this as follows:
\begin{enumerate} 
\item Show that up to homotopy in $\S$, the essential non-peripheral curves on $\dd \S$ are indexed by $ \QQ / \sim $  where $p/q \sim \pm (p+ 2kq)/q, k \in \ZZ$. (Proposition~\ref{curveequality}).

\item Given $\gamma \in \pi_1(\dd\S)$, give an algorithm for computing $\Tr \rho_x(\gamma)$  as a polynomial in $x$, in particular identifying its  two highest order terms in terms of $p,q$. (Section~\ref{sec:traces} and Proposition~\ref{prop:tracepoly}).

\item Show that $\P_{0/1} = (-\infty, -3]$ and $\P_{1/1} = [2, \infty)$ (where 
 $\P_{p/q}$ denotes the pleating ray of the curve $\gamma_{p/q} \in \pi_1(\dd\S)$ identified with $p/q$). 
(Section~\ref{sec:fuchsian}).

\item Show that  $\P_{p/q}$ is a union of connected non-singular branches of $\RR_{\g}$.
(Theorem~\ref{thm:nonsing}).

\item For   $p,q \neq 0,1$, identify $\P_{p/q}$ by
showing it has two connected components, namely the branches of $\RR_{\g}$ which are asymptotic to the directions $e^{\pm i \pi (p/q + 1)}$ as $|x| \to \infty$.
(Proposition~\ref{prop:connectivity}).

\item Prove that rational rays $\P_{p/q}$ are dense in $\D_{\S}$.  (Theorem~\ref{thm:density}).
\end{enumerate}

One could carry all this out following almost word for word the arguments in~\cite{ksriley}. Rather than do this, we indicate as appropriate how more general results can be put together to provide a  somewhat less ad hoc proof of the results. The claim that $\P_{p/q}$ has two connected components appears to contradict the results in~\cite{ksriley}, see however  the following remark and Proposition~\ref{prop:connectivity} below.
 The pleating rays are shown on the left in Figure~\ref{Figs/Riley-Ray-BQ}  with the Riley slice rays from~\cite{ksriley} on the right for comparison.

\begin{remark} \label{Komori issue}
\rm{There were two rather subtle errors in~\cite{ksriley} .  The first was  that, in the enumeration of curves on $\dd \S$, we omitted to note that $\gamma_{p/q} $ is homotopic to $ \gamma_{-p/q}$ in $\S$. The second was, that we found only one of the two components of $\P_{p/q}$. 
 Since $\P_{p/q} =   \P_{-p/q}$, these two errors in some sense cancelled each other out. They were discussed at length and resolved in ~\cite{koms} and we make corresponding corrections here.}
\end{remark}

 \begin{figure}[ht] 
\includegraphics[width=5cm]{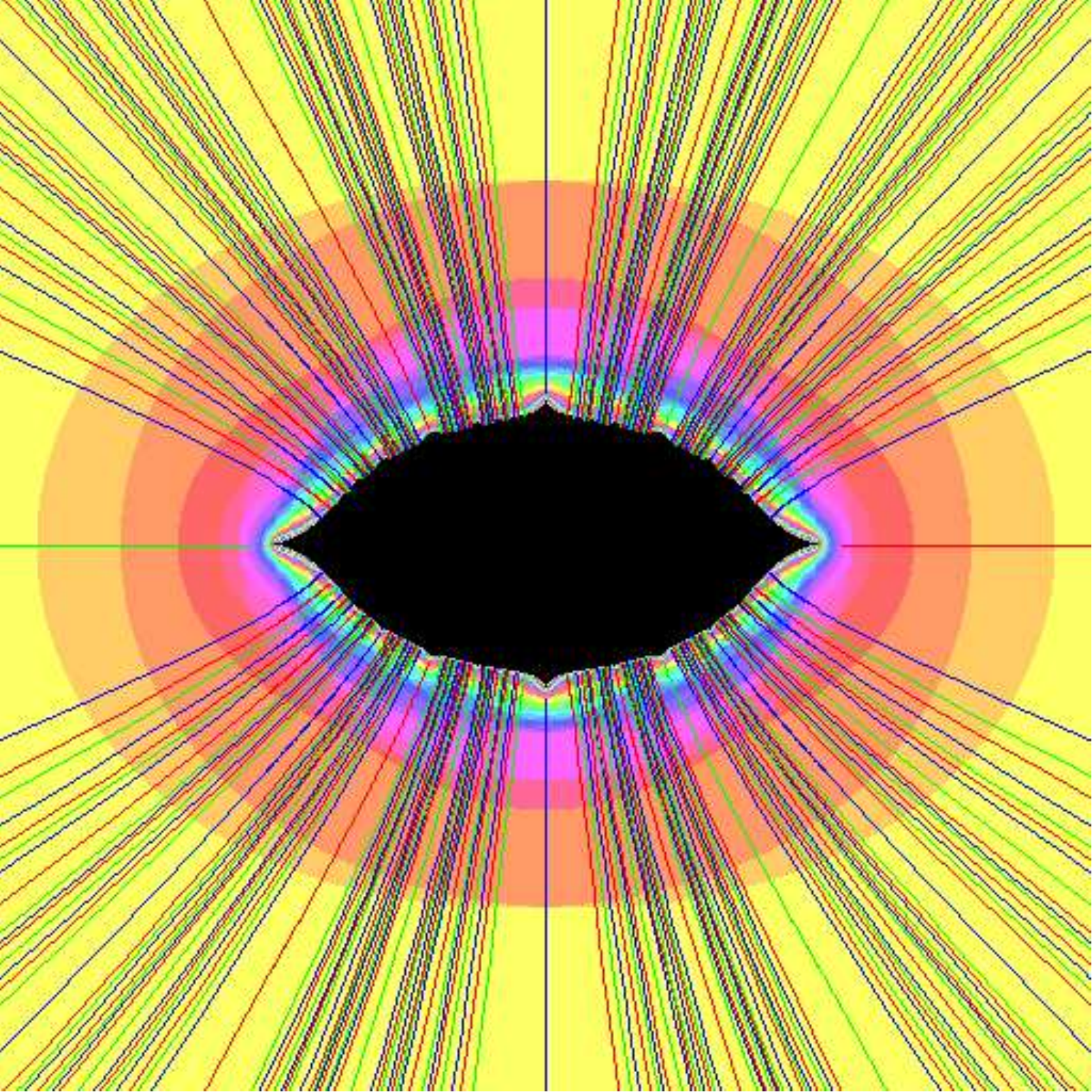}\hspace{1cm}
\includegraphics[width=5cm]{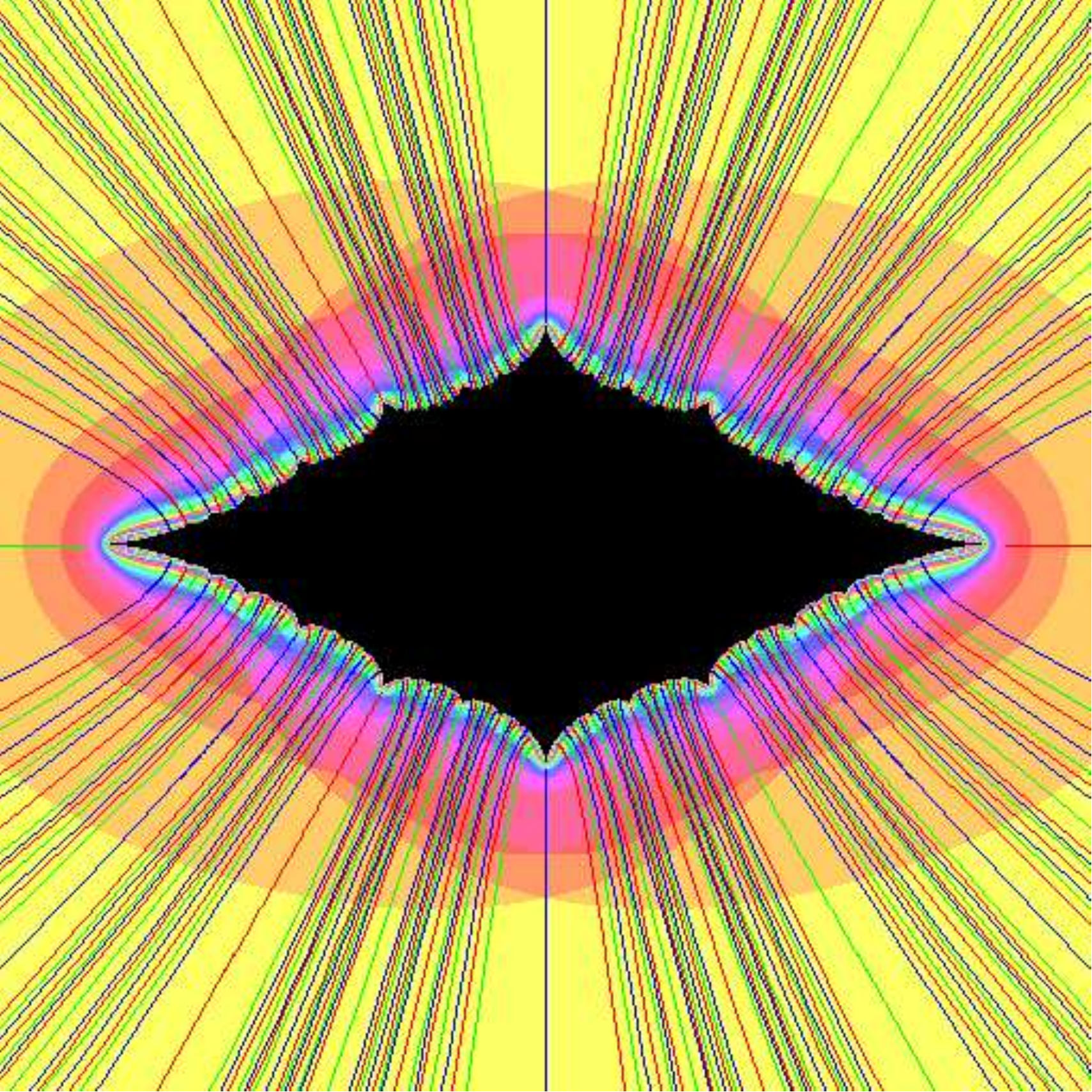}
\caption{Left:  Pleating rays for $G_{\S}(x)$. Right: Pleating rays  for the Riley slice as described in~\cite{ksriley}.  
The coloured (grey) regions are the Bowditch sets for the initial triples discussed in~\ref{sec:discussion}; conjecturally these coincide with the closure of the regions filled by the pleating rays. For a discussion of how the rays were actually computed, see Section~\ref{sec:torustree1}
} \label{Figs/Riley-Ray-BQ}  
\end{figure}

 \subsection{Step 1: Enumeration of curves on $\dd \S$} \label{sec:enumeration}

We need to enumerate essential non-peripheral unoriented simple curves on $\dd \S$ up to homotopy equivalence in $\S$.
As is well known,  such curves on $\dd \S$ are,  up to homotopy equivalence in $\dd \S$,  in bijective correspondence with lines of rational slope
in the plane, that is, with $\QQ \cup \infty$, see for example~\cite{ksriley, koms}. For $(p,q) $ relatively prime and $q \geq0$,  denote the class corresponding to $p/q$ by $\g_{p/q}$.  
We have:
\begin{proposition}[\cite{koms} Theorem 1.2] \label{curveequality}The unoriented curves $\gamma_{p/q}, \gamma_{p'/q'}$ are homotopic in $\S$ if and only if 
$p'/q' = \pm p/q  + 2k, k \in \ZZ$.
\end{proposition}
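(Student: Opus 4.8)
The plan is to prove Proposition~\ref{curveequality} by identifying $\pi_1(\S)$-homotopy classes of simple closed curves on $\dd \S = \Sigma_{0;3,3,3,3}$ with orbits of the natural action of the subgroup of the mapping class group of $\dd \S$ induced by homeomorphisms that extend over the handlebody-with-cone-axes $\S$. The two ingredients are: (a) the standard Dehn--Nielsen--Baer type identification of isotopy classes of essential simple closed curves on a four-punctured (here four-cone-pointed) sphere with $\QQ\cup\infty$, on which the mapping class group acts through $\PSLTwoZ$ via its action on slopes; and (b) a description of which mapping classes are realized by homeomorphisms of $\S$, i.e. which extend over the compression. Concretely, I would first recall that the curve complex data for $\Sigma_{0;4}$ is governed by the Farey graph, with $\gamma_{p/q}$ adjacent to $\gamma_{p'/q'}$ iff $pq'-p'q=\pm 1$; this is exactly the picture already set up via Figure~\ref{fig:farey}. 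The punctured-sphere mapping class group surjects onto $\PGLTwoR$-by-$\mathbb Z_2\times\mathbb Z_2$ acting on slopes; equivalently $\Mod(\Sigma_{0;4})$ acts on $\QQ\cup\infty$ through $\PSLTwoZ$ together with the coordinate involutions $p/q\mapsto -p/q$.

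Next I would pin down exactly which of these are induced by homeomorphisms of $\S$. Since $\S = \H/\O$ and $\pi_1(\dd\S)\to\pi_1(\S)$ is a surjection with kernel normally generated by one curve (the meridian $\gamma_{1/0}$, which bounds a disk in $\S$ — this is the statement that $\gamma_{1/0}$ is \emph{peripheral}/compressible and is why it is excluded), the subgroup of $\Mod(\dd\S)$ acting on curves-up-to-$\S$-homotopy is the stabilizer of the isotopy class of the compressing curve(s), together with the fact that Dehn twisting about the meridian acts trivially on $\pi_1(\S)$-homotopy classes. In slope terms: the meridian is $\infty$, a half-twist about it realizes $p/q\mapsto p/q+1$ (adding a full twist to the handle), and this is precisely the transformation $p/q \mapsto p/q + 2k$ once one accounts for the order-$2$ symmetry. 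Combined with the orientation-reversing symmetry $\gamma_{p/q}\simeq\gamma_{-p/q}$ coming from the hyperelliptic-type involution of $\dd\S$ that extends over $\S$ (this is exactly the symmetry flagged as missing from~\cite{ksriley} in Remark~\ref{Komori issue}), one gets that $\gamma_{p/q}\simeq_\S\gamma_{p'/q'}$ iff $p'/q' \in \{\pm p/q + 2k : k\in\ZZ\}$.

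For the \emph{only if} direction — that distinct equivalence classes under this relation really are non-homotopic in $\S$ — I would use a trace/combinatorial invariant rather than pure topology: by the trace-polynomial computation promised in Section~\ref{sec:traces} (Proposition~\ref{prop:tracepoly}), $\Tr\rho_x(\gamma_{p/q})$ has leading behaviour determined by $p,q$ in a way that is invariant precisely under $p/q \mapsto \pm p/q + 2k$ and distinguishes all other pairs; alternatively one invokes Theorem~1.2 of~\cite{koms} directly, which is cited as the source. I would present the argument as: reduce to the model case, quote the Farey/$\PSLTwoZ$ description of $\Mod(\Sigma_{0;4})$ on slopes, identify the image of $\Mod(\S,\dd\S)$ as the subgroup generated by the half-twist on the meridian (acting as $p/q\mapsto p/q+2$, using that the meridian has slope $\infty$ and the relevant symmetry halves the step) and the involution $p/q\mapsto -p/q$, and finally certify sharpness by the leading-coefficient invariant.

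The main obstacle, and the step I would flag, is getting the \emph{exact} periodicity constant right — why $2k$ and not $k$ — and the sign identification, since these are precisely the two points where~\cite{ksriley} erred (as Remark~\ref{Komori issue} candidly notes). I would handle this carefully by tracking a specific meridian curve through the quotient $\H\to\S$: a full Dehn twist along the handle of the genus-two handlebody $\H$ descends, under the order-$3$ quotient and the identification $\pi_1(\H)=\langle X,Y\rangle$ with $X=K_0K_1$, to the shift $p/q\mapsto p/q+2$ on $\dd\S$, with the "$+1$" half-twist not extending over $\S$ because it does not commute with the cone structure. Rather than reprove all of this from scratch, in practice I would cite~\cite{koms} Theorem~1.2 for the full statement and include only the sketch above, since the delicate bookkeeping is exactly what that reference was written to settle.
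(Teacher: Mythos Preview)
Your approach differs from the paper's and contains a genuine gap in the ``if'' direction for the symmetry $\gamma_{p/q}\sim_\S\gamma_{-p/q}$. You frame the problem via the subgroup of $\Mod(\dd\S)$ that ``extends over $\S$'', and then assert that an involution extending over $\S$ gives $\gamma_{p/q}\sim_\S\gamma_{-p/q}$. But a homeomorphism $\phi$ of $\dd\S$ extending to $\Phi:\S\to\S$ only yields $\phi(\gamma)\sim_\S\gamma$ when $\Phi$ is isotopic to the identity \emph{as an orbifold self-map} (respecting the two order-$3$ cone axes); merely extending is not enough, since a nontrivial $\Phi$ can move conjugacy classes in $\pi_1(\S)=\ZZ_3*\ZZ_3$. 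You also identify the relevant subgroup with ``the stabilizer of the compressing curve'', which is a different object again. The meridian-twist piece is fine: the full Dehn twist about $\gamma_{1/0}$ does act as $p/q\mapsto p/q+2$ and does extend to something isotopic to the identity (the twist is supported near a disk), so $\gamma_{p/q}\sim_\S\gamma_{p/q+2}$ follows. But the $-p/q$ step needs an actual homotopy, not a symmetry.

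The paper's proof is more hands-on and avoids this issue entirely. It cuts $\S$ along the meridian disk $m$ into two balls $\hat B_i$, describes $\gamma_{p/q}$ as $q$ symmetric arcs on each $\dd B_i$ joined across $\dd m$ with a shift of $p$, and then exhibits an explicit free homotopy in $\S$: arrange the arcs on the back of $\dd\S$, then drag the joining segments \emph{through} the disk $m$ to the front. Because passing through the disk reverses the apparent direction of the shift, this shows directly that the curve with shift $+p$ is homotopic in $\S$ to the curve with shift $-p$. For the ``only if'' direction the paper does exactly what you propose: it defers to the trace computation (Proposition~\ref{prop:tracepoly}) and reads off the top two coefficients to separate the classes (Corollary~\ref{curveinequality}). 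Finally, note that your fallback ``cite \cite{koms} Theorem~1.2'' is circular here, since that is precisely the statement being proved; the paper's point is to give an independent and more transparent argument than \cite{koms}.
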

Missing the identification  $\gamma_{p/q} \sim \gamma_{-p/q}$ was the first of the two errors in~\cite{ksriley} referred to in Remark~\ref{Komori issue}. 

Before proving the proposition, we need to explain the identification of curves on $\dd S$ with $\QQ \cup \infty$. In~\cite{ksriley, koms} this was done using the plane punctured at  integer points as an intermediate covering between $\dd S$ and its universal cover.  The idea is sketched in Section~\ref{sec:correspondence}. Here we give a slightly different description of the curve $\gamma_{p/q} $ which leads to a nice proof of the above result.

 Cut $\S$ into two halves along the meridian disk $m$ which is the plane which perpendicularly bisects the common perpendicular $C$ to the two singular axes $\Ax  K_i , i=0,1$.  Each half  is a ball  $\hat B_i$ with a singular axis $\Ax  K_i $. The boundary  $\dd B_i = \dd \hat B_i \cap \dd \S$  is a sphere with two cone points and a hole $\dd m$.  Since the axes of $K_i$ are oriented, we can distinguish one cone point on each $\dd B_i$ as the positive end  of $\Ax  K_i $. 
Now $\dd \S$ has a hyperbolic structure inherited from the ordinary set (or from the pleated surface structure on $\dd \C/G_{\S}(x)$), in which $\dd m$ is geodesic. With respect to such a structure, 
each $\dd B_i$ has a reflectional symmetry $\iota$ in the plane containing $\Ax  K_i $ and $C$, which maps the cone points to themselves, is an involution on  $\dd m$ with two fixed points which maps the `front' to the `back' as shown in Figure~\ref{fig:fronttoback}.    There is a preferred base point $P_i$ on $\dd m$, namely the foot of the perpendicular from the negative end of  $\Ax  K_i $ to $\dd m$.

 \begin{figure}[ht] 
\includegraphics[width=8cm]{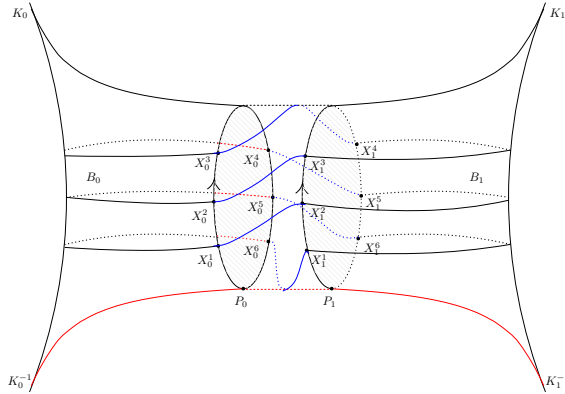}
\caption{The arrangement of arcs on $\dd \S$. The curve shown illustrates the case $p=1, q=3$.}\label{fig:fronttoback}
\end{figure}

Let $\g$ be an essential non-peripheral simple curve on $\dd \S$. For each $i= 0,1$,  $\g \cap \dd B_i$ consists of $q$ arcs joining $\dd m$ to itself.  Start with the strands of $\g$ arranged symmetrically with respect to $\iota$, that is, with front to back symmetry.
 Orient $\dd m$ so that it points `upwards' on the front side of the figure, noticing that $\iota$ reverses the orientation. Lifting $\dd m $ to its cyclic cover $\RR$, enumerate in order the endpoints $X^{k}_i, i=0,1;  k \in \ZZ$ of arcs of $\gamma$   starting (say) with the arc meeting 
$\dd m$ nearest $P_i$, and so that increasing order is in the direction of the upwards orientation of $\dd m$ viewed from the front side in the figure. Since in fact $X^{k}_i = X^{k+2q}_i $, the enumeration is really $\rm{mod} \  2q\ZZ$.   
 
   To reconstruct  $\g$ we have to join the endpoints $X^{k}_0$ on $\dd B_0$ to the endpoints $X^{k'}_1$ on $\dd B_1$.
  Since the arcs have to be matched in order round $\dd m$, if  $X^i_0$ is  joined to $X^j_1$ then 
$X^{i+k}_0$ is  joined to $X^{j+k}_1$ for all $k \in \ZZ$. Set  $ p = j-i$.

It is not hard to see that the resulting curve $\gamma_{p/q}$ is connected if and only if $(p,q)$ are relatively prime. Note that with this description, $\dd m$ is the curve $q=0$, that is $\g_{1/0}$. The curve $\g_{0/1}$ is the curve $K_0K_1$ and $\g_{1/1} = K_0K_1^{-1}$.
We leave it to the reader to see that this description is the same as that obtained from the lattice picture in~\cite{ksriley}, see also Section~\ref{sec:correspondence}.

\medskip
{\sc Proof of Proposition~\ref{curveequality}}
Write $\gamma_{p/q} \sim \gamma_{p'/q'}$ to indicate that $\gamma_{p/q} , \gamma_{p'/q'}$ are homotopic in $\S$.
Since Dehn twisting round $\dd m$ is trivial in $\S$ and sends $X^k_i \to X^{k+2q}_{i}$,  we have $\gamma_{p/q} \sim \gamma_{p/q +2}$. To see why $\gamma_{p/q} \sim \gamma_{-p/q}$, we proceed as follows. Consider the arrangement  shown in Figure~\ref{fig:fronttoback}, in which $B_0$ is on the left  and  $\dd m$ is oriented `upwards'. Each $\dd  B_i$ has  a natural `front' and `back' which are interchanged by the involution $\iota$.  
We will prove the result by arranging $\gamma$ on $\dd S$ in two different ways. Start with the strands of $\g$ arranged symmetrically front to back, that is, with respect to $\iota$, as on the two sides of Figure~\ref{fig:fronttoback}.

(1) Homotope  all the arcs of $\gamma \cap \dd B_i$ on $\dd B_i$  by dragging the endpoints on the back side around one or other cusp so that they meet $\dd m$ on the front, see Figure~\ref{fig:pullround}. In this case, the choice $p>0$ means that 
on an arc approaching $\dd m$ from $\dd B_0$ one turns left (up) for $p$ slots before joining up arcs.

 \begin{figure}[ht] 
  \includegraphics[width=4cm]{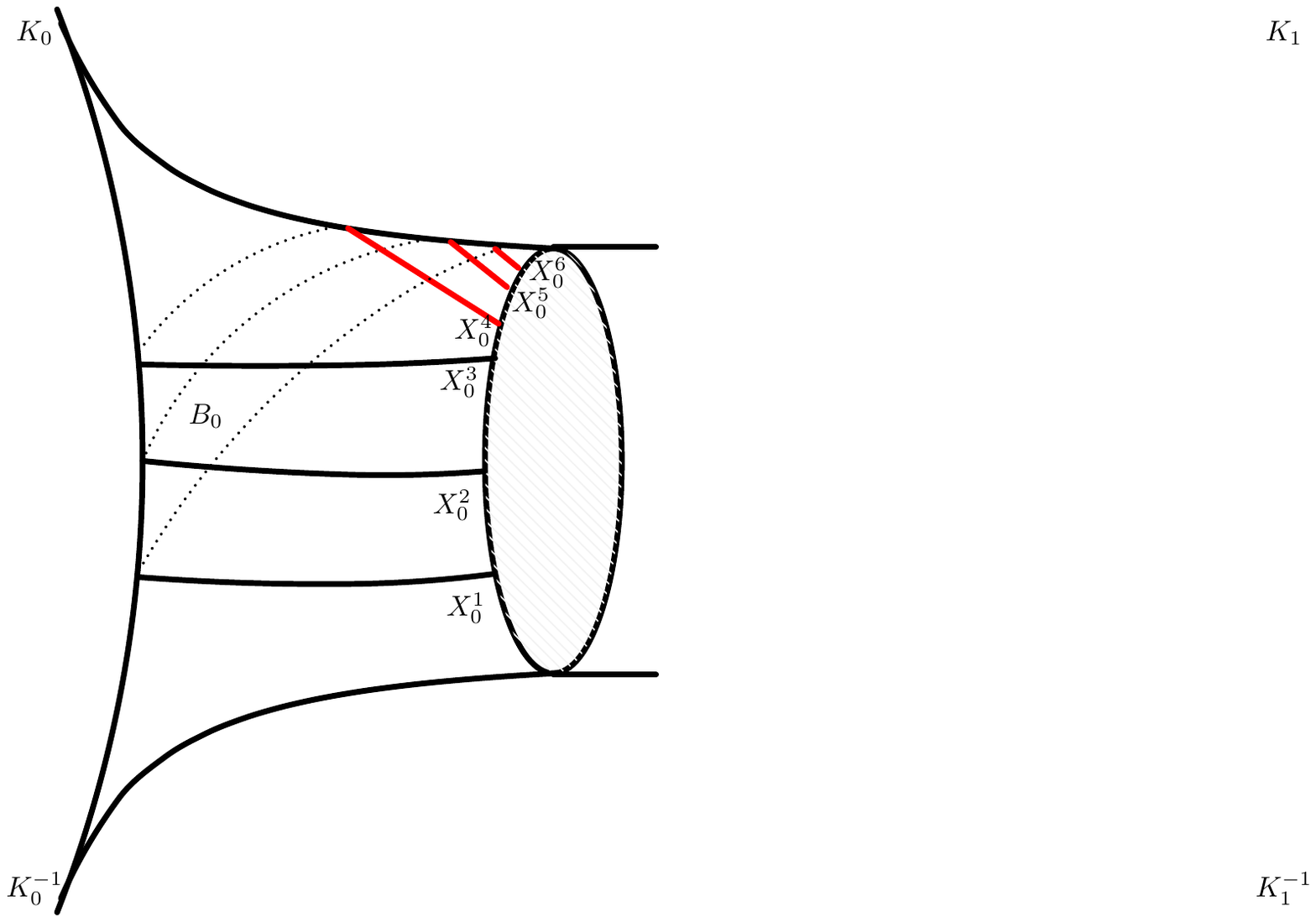}
 \caption{Curves pulled round to the front of $B_0$ ready to be joined as in (1) of the proof of Proposition~\ref{curveequality}.}\label{fig:pullround}
\end{figure}

(2) Now for the second arrangement. Homotope  all the arcs of $\gamma \cap \dd B_i$ on $\dd B_i$, by dragging the endpoints on the front side  so that they meet $\dd m$ on the back of $\dd B_i$. Notice turning left for $p$ slots on the back side, when viewed through $\S$ from the front to the back, looks the same as turning right (down) for $p$ slots on the front side.

Now, starting from situation (2),  homotope $\gamma$ in $\S$ by pulling the arcs meeting $\dd m$ on the back side through $m$ so that they meet $\dd m$ on the front side keeping their `horizontal' level fixed,  so that an endpoint $X$ moves to $\iota(X)$. The endpoints on the front side now have  the same up-down order as they did on the back side. This shows that  the curve obtained connecting the arcs moving up $p$ slots is homotopic in $\S$ to the curve obtained by moving down $p$ slots, as claimed.  

We will show that  if $p'/q' \neq \pm p/q  + 2k, k \in \ZZ$ then $\gamma_{p/q} \nsim \gamma_{p'/q'}$ after computing traces, see Corollary~\ref{curveinequality}. \qed

\subsection{Step 2: Computation of traces}\label{sec:traces}

Let $V_{p/q} \in \SL = \rho_x(\g_{p/q})$, where, since we want to compute $\Tr V_{p/q}$,  we only need consider $V_{p/q}$ up to cyclic permutation and inversion.  
Rather than using the associated torus tree, we will work directly with a $4$-holed sphere $\Sigma_{0,4}$ and the associated tree as described in~\cite{MPT}, see also \cite{goldman}. 
Let $\alpha, \beta, \gamma,\delta$ denote loops round the four holes, oriented so that $\alpha  \beta \gamma \delta = \rm{id}.$ The fundamental group is identified with the free group $  F_3$ with generators
$\alpha, \beta, \gamma$.
 A representation $\rho\co F_3 \to \SL $ is determined up to conjugation by  its values on seven elements as follows (where we use $\hat w$ in place of $w$ in \cite{MPT} etc to distinguish it from a variable $w$ already in other use):
$$ \Tr \rho(\a) = a; \Tr \rho(\b) = b; \Tr \rho(\c) = c; \Tr \rho(\d) = d$$
$$ \Tr \rho(\a\b) = \hat x; \Tr \rho(\b\c) = \hat y; \Tr \rho(\c\a) = \hat z$$
related by the equation 
\begin{equation} \label{conedspheretraces}  \hat x^2 +\hat y^2 +\hat z^2 +\hat x\hat y\hat z = \hat p\hat x + \hat q\hat y+\hat  r\hat z + \hat  s\end{equation} 
where  
$$ \hat p = ab + cd,  \hat  q = bc+ad,  \hat  r = ac+bd,  \hat  s = 4-a^2 -b^2 - c^2 - d^2 - abcd.$$
We identify our generators $K_i$ as:
$ \alpha = K_0, \b = K_1, \c = K_2, \d = K_3$.
Thus we find:
$$ a = b = c = d = 1, \hat x = \Tr K_0K_1 = x, \hat y =  \Tr K_1K_2 = 2, \hat z =  \Tr K_2K_0 = -x+1.$$
As a check, it is easy to verify that the trace identity \eqref{conedspheretraces} holds. Notice that none of the expressions  $\hat p, \ldots, \hat z$ depend on the sign choices made in Section~\ref{sec:handlebody}.

The traces can be arranged in a trivalent tree in the usual way. As explained above, we have $\gamma_{0/1} = K_0K_1, \gamma_{1/0} =   \rm{id}, \gamma_{1/1} =K_0K_1^{-1} $.  As explained in~\cite{MPT} Section 2.10, there are now $3$ moves, depending on the values of $\hat p,\hat q, \hat r$. In our case $\hat p=\hat q= \hat r = 2$ so the three moves described there coincide. 
Following~\cite{MPT}, if $u,v,w$ are labels round a vertex, with $v,w$ labels adjacent along a common edge $e$, then the label at the vertex at the opposite end of $e$ is $u' = 2-vw-u$, compare Figure~\ref{fig:markofftree} in which $u ' = vw-u$.

Clearly this procedure gives an algorithm for arranging curves and computing  traces on a trivalent tree by analogy with that described in Section~\ref{sec:markoff}. 
Curves generated  in this way inherit  a natural labelling from  the usual procedure of Farey addition as described in Section~\ref{sec:markoff}. Denote the curve  which inherits the label $p/q$   by $\delta_{p/q}$; we say this curve is in \emph{Farey position $p/q$} on the tree. We shall refer to this tree together with its new rule for computing traces  as the $\S$-tree, to distinguish it from the Markoff tree of Section~\ref{sec:markoff}. 

 We need to show that $\delta_{p/q}$ is the same as the curve  $\gamma_{p/q}$ described in the previous section, namely  the 
curve with $2q$ intersections with the meridian $\dd m$ and a twist by $p$.  
 \begin{lemma} \label{lem:treeisok}  With the above notation,  $\d_{p/q} = \g_{p/q}$.\end{lemma}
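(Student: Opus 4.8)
The plan is to match two parametrizations of the same collection of curves on $\dd\S$: the geometric one of Section~\ref{sec:enumeration}, in which $\g_{p/q}$ is the curve meeting $\dd m$ in $2q$ points with a $p$-fold twist, and the combinatorial one coming from Farey addition on the $\S$-tree, in which $\d_{p/q}$ is read off from the trivalent tree. Both families are generated recursively from the same base triple, so it suffices to check (i)~the base cases $p/q \in \{0/1,\,1/0,\,1/1\}$ agree, and (ii)~the two recursions — Farey addition on the tree versus the geometric "Farey addition" on curves obtained by juxtaposing arc systems across the meridian $\dd m$ — coincide. Since the base cases are already identified in Section~\ref{sec:enumeration} ($\g_{1/0}=\dd m$, $\g_{0/1}=K_0K_1$, $\g_{1/1}=K_0K_1^{-1}$) and these are precisely $\d_{1/0}$ (the trivial loop / $q=0$ case), $\d_{0/1}$, $\d_{1/1}$ on the tree, the content is entirely in step~(ii).

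First I would set up the geometric side carefully. Using the description of $\g_{p/q}$ via endpoints $X^k_i$ on $\dd m$ lifted to $\RR$, together with the matching rule $X^i_0 \leftrightarrow X^{i+p}_1$, I would verify that if $\g_{p/q}$ and $\g_{r/s}$ are curves that together fill (i.e.\ $ps-qr = \pm 1$, the neighbour condition), then the curve obtained by cutting along $\dd m$, reconnecting the arc systems, and resolving intersections so as to lie in minimal position is exactly $\g_{(p+r)/(q+s)}$: the number of intersections with $\dd m$ adds ($2q + 2s = 2(q+s)$) and the twist parameter adds ($p+r$). This is the standard Farey-addition picture for curves on a four-holed sphere (or punctured torus), and it is the same picture underlying the trace recursion in~\cite{MPT}; I would cite that rather than re-derive it.

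Next, on the combinatorial side, I would recall that the $\S$-tree is the trivalent tree dual to the Farey graph, with the three labels around each vertex forming a triple $(\d_{p/q}, \d_{r/s}, \d_{(p+r)/(q+s)})$ for neighbours $p/q, r/s$, and that crossing an edge corresponds to the curve-level move $(U,V,UV) \mapsto (U,V, \text{the other curve disjoint-up-to-minimal-intersection from the edge})$ — exactly the geometric operation described in the previous paragraph. So the two recursions are literally the same recursion on the same indexing set $\hat\QQ$; starting from agreeing base triples, an induction on the depth in the Farey diagram (equivalently, on $F(p/q)$, the word length / Fibonacci-growth function introduced in Section~\ref{sec:markoff}) gives $\d_{p/q} = \g_{p/q}$ for all $p/q$.

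The main obstacle is bookkeeping the conventions so that the two "Farey additions" agree on the nose rather than up to an ambiguity $p/q \mapsto -p/q + 2k$ (cf.\ Proposition~\ref{curveequality}) — i.e.\ fixing the orientation of $\dd m$, the choice of base point $P_i$, the direction ("turn left / up") of the twist, and the cyclic-order matching of arcs across $\dd m$ so that the geometric juxtaposition produces $\d_{(p+r)/(q+s)}$ and not its mirror or a $\dd m$-Dehn-twist image. Once those choices are pinned down consistently with Figures~\ref{fig:fronttoback} and~\ref{fig:markofftree} (and with the generator identifications $\alpha=K_0,\beta=K_1,\gamma=K_2,\delta=K_3$ fixed in Section~\ref{sec:traces}), the induction is routine. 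I would therefore devote the bulk of the write-up to nailing the base case $\d_{1/1}=K_0K_1^{-1}$ explicitly — since this is where the "$2-vw-u$" versus "$vw-u$" discrepancy between the $\S$-tree and the Markoff tree first manifests — and then invoke the matched recursions.
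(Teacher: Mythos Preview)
Your proposal is correct and follows essentially the same approach as the paper: induction along the Farey tree from the common base triple, with the inductive step verifying that the tree move agrees with the geometric ``Farey addition'' on curves (intersection number with $\dd m$ and twist parameter both add). The paper's sketch frames the tree move explicitly as a Dehn twist (via~\cite{MPT}) realized by surgery at the two intersection points of the neighbouring curves (Figure~\ref{fig:surgery}), rather than your ``reconnecting arc systems across $\dd m$'', but these are equivalent descriptions of the same smoothing; your emphasis on fixing orientation conventions to rule out the $\pm p/q + 2k$ ambiguity is exactly the bookkeeping the paper handles by noting that one of the two surgeries lands on the already-identified vertex.
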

\begin{proof} (Sketch) By definition we have $\d_{p/q} = \g_{p/q}$ for $p,q \in \{0,  1 \}$. With the notation above,  these are the curves $\a \b, \b \g, \g \a$, each of which separates the punctures in pairs.  

Call two essential simple non-peripheral curves on $\dd \S$ \emph{neighbours} if they intersect exactly twice. 
Note that of the initial triple,  each pair adjacent along an initial edge are neighbours, so that the triple round the initial vertex are neighbours in pairs.

Now we check inductively that this is always the case.  Give a pair of neighbours $\d, \d'$ along an edge, the remaining curves at the vertices at the opposite ends of this edge are obtained by Dehn twisting $\d$ about $\d'$ (or vice versa) in each of the two possible directions, see~\cite{MPT}. Thus for example $\d_{1/1}$ is obtained by  Dehn twisting $\d_{0/1}$ about $\d_{1/0}$ while $\d_{-1/1}$ is obtained by  Dehn twisting $\d_{0/1}$ about $\d_{1/0}$ in the opposite direction. Moreover if $\d, \d'$ are neighbours, then so are the pairs
$\d, D^{\pm}_{\d}( \d')$ and $\d', D^{\pm}_{\d}( \d')$, where $D_{\d}( \d')$ denotes a Dehn twist of $\d'$ about $\d$ and by abuse of notation we write $D^{+}, D^{-}$ for $D, D^{-1}$ respectively.

Now we show inductively that $\d_{p/q} = \g_{p/q}$. Suppose that it is true for neighbours $p/q,r/s$ where $|ps-rq| = 1$. By induction we may assume that $\d_{p/q} , \d_{r/s}$ are adjacent along an edge $e$ of the tree. The two remaining curves $D^{\pm}_{\d_{p/q}}(\d_{r/s})$ at the vertices at the ends of $e$   result from Dehn twisting $\d_{p/q}$ about $\d_{r/s}$ in each of the two possible directions, hence are also neighbours. From the inductive hypothesis, we can assume that one of these two curves is $\d_{p-r/q-s} = \g_{p-r/q-s}$ or $\d_{r-p/s-q} = \g_{r-p/s-q}$  depending on whether  $ ps-rq  = \pm 1$. In accordance with the Farey labelling system, the curve at the other vertex is $\d_{p+r/q+s} $.

 \begin{figure}[ht] 
  \includegraphics[width=8cm]{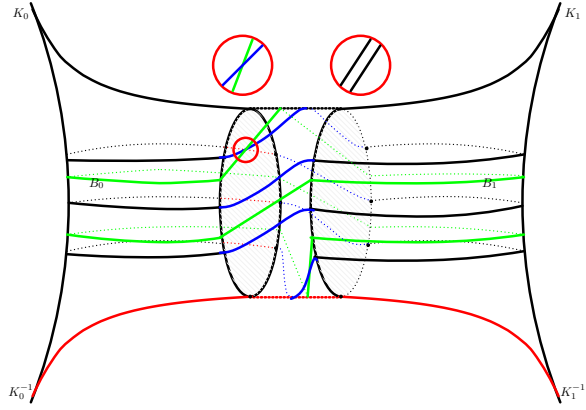}
 \caption{Here  $\g_{1/3}$ and $\g_{1/2}$ are surgered to give $\g_{2/5}$, see the proof of Lemma~\ref{lem:treeisok}. The inset circles show the direction of surgery. }\label{fig:surgery}
\end{figure}

Now  the curves
 $D^{\pm}_{\d_{p/q}}(\d_{r/s})$ can be found by surgery. On each $B_i$, arrange both curves symmetrically with respect to the front and back of $\S$ as described above, then join the strands so that they have minimal intersection as in Figure~\ref{fig:surgery}. With $\g_{p/q}$ in this position, its twist $p$  is its intersection number with the geodesic joining the two positive cone points of the axes $K_i$,  and likewise for 
$\g_{r/s}$.  To perform the Dehn twist we have to cut the curves at their intersection points and then make a consistent choice of which direction to rejoin the resulting arcs.  One of the two choices will give a curve with $2(q+s)$ intersection  points with the meridian $\dd m$. Clearly the curve with the `positive' surgery (see the inset circles in Figure~\ref{fig:surgery}) will have intersection number $p+r$ with this line, and hence is the curve $\g_{p+r/q+s}$. Since we already know the curve at the other vertex is  $\d_{r-p/s-q} =\g_{p-r/q-s}$ (or $\g_{r-p/s-q}$), this shows that $\d_{p+r/q+s} = \g_{p+r/q+s}$. This completes the proof.
\end{proof}

\begin{prop} \label{prop:tracepoly} Let $V_{p/q}(x) = \rho_{\S}(x) (\g_{p/q})$ as above. Then:
\begin{enumerate}
\item $\Tr V_{p/q}$ is a  polynomial  in $x$ whose top two terms are $(-1)^{p-q-1} (x^{q} - px^{q-1} )$.

\item $\Tr V_{p/q} = \Tr V_{(p/q) + 2} = \Tr V_{-p/q}$.

\item $\Tr V_{p/q} (x) = \Tr V_{p+q/q } (1-x)$.

\end{enumerate}

\end{prop}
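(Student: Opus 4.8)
The plan is to work entirely on the $\S$-tree of traces introduced in Section~\ref{sec:traces}, using the initial triple $(\hat x, \hat y, \hat z) = (x, 2, 1-x)$ attached to the Farey positions $(0/1, 1/0, 1/1)$, together with the move rule $u' = 2 - vw - u$ from~\cite{MPT}. For part~(1), I would argue by induction on the word length $F(p/q)$ along the tree, tracking not just the leading term but the top two terms of $\Tr V_{p/q}$ as a polynomial in $x$. The base cases are the initial triple and its immediate neighbours: $\Tr V_{0/1} = x$, $\Tr V_{1/1} = 1-x$, and a one-step computation (e.g. the curves in Farey positions $-1/1$ and $2/1$) to seed the induction on both sides. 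The inductive step is the observation that if $\d_{p/q}$ and $\d_{r/s}$ are neighbours adjacent along an edge with $|ps-rq|=1$, then the two curves at the ends of that edge are $\d_{(p+r)/(q+s)}$ and one of $\d_{(p-r)/(q-s)}$ or $\d_{(r-p)/(s-q)}$, and the move rule gives
\[
\Tr V_{(p+r)/(q+s)} = 2 - \Tr V_{p/q}\,\Tr V_{r/s} - \Tr V_{(p-r)/(q-s)}.
\]
Feeding in the inductive form $\Tr V_{p/q} = (-1)^{p-q-1}(x^q - p x^{q-1} + \cdots)$, the product term dominates (degrees add to $q+s$), the constant $2$ and the lower-degree corrector are negligible, and one checks that the sign $(-1)^{(p+r)-(q+s)-1}$ and the coefficient $-(p+r)$ of $x^{q+s-1}$ come out correctly. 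The degree count $q+s$ and the coefficient bookkeeping are routine once one is careful that the $x^{q+s}$ and $x^{q+s-1}$ coefficients of $\Tr V_{(p-r)/(q-s)}$ vanish (its degree is $q-s < q+s$), so only the product contributes to the top two terms.

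For part~(2), the equality $\Tr V_{p/q} = \Tr V_{(p/q)+2} = \Tr V_{-p/q}$ is an immediate consequence of Proposition~\ref{curveequality} (Komori's theorem): the curves $\gamma_{p/q}$, $\gamma_{(p/q)+2}$ and $\gamma_{-p/q}$ are freely homotopic in $\S$, hence represent conjugate elements of $\pi_1(\S)$, hence have equal traces under any representation $\rho_{\S}(x)$. One could alternatively give a self-contained tree-theoretic argument: the two shift symmetries $p/q \mapsto p/q + 2$ and $p/q \mapsto -p/q$ of the Farey labelling preserve the initial data $(x,2,1-x)$ (up to the relabelling of the tree they induce), and the move rule is equivariant, so the trace polynomials are permuted accordingly; but invoking Lemma~\ref{lem:treeisok} and Proposition~\ref{curveequality} is cleaner. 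For part~(3), the substitution $x \mapsto 1-x$ swaps $\hat x = x$ with $\hat z = 1-x$ while fixing $\hat y = 2$; this corresponds to the symmetry of the $\S$-tree exchanging the Farey vertices $0/1$ and $1/1$ and fixing $1/0$, which is the affine involution $p/q \mapsto (q-p)/q = (p+q)/q$ in the $\sim$ sense (modulo the identifications of part~(2)). Since this involution of $\hat{\QQ}$ is realized by a map of the tree intertwining the two labellings and the move rule is symmetric in its inputs, applying $\rho_{\S}(1-x)$ to $\gamma_{(p+q)/q}$ gives the same polynomial as applying $\rho_{\S}(x)$ to $\gamma_{p/q}$.

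The main obstacle I anticipate is part~(1): getting the \emph{two} top terms — and in particular the exact sign $(-1)^{p-q-1}$ and the coefficient $-p$ — to propagate correctly through Farey addition, where $(p,q) \mapsto (p+r, q+s)$ mixes the numerators and denominators of the two parents. One has to verify the sign exponent is additive in the right way, $\bigl((p+r)-(q+s)-1\bigr) \equiv (p-q-1) + (r-s-1) \pmod 2$, which forces a careful check that $(p-q) + (r-s)$ has the correct parity relative to $(p+r)-(q+s)$ — it does, since the discrepancy is $2q + 2s$ — and that the minus sign in the move rule $u' = 2 - vw - u$ interacts correctly with this. The coefficient of $x^{q+s-1}$ requires extracting the $x^{q+s-1}$ term of the product $\Tr V_{p/q}\cdot\Tr V_{r/s}$, namely $-(p x^{q-1})(x^s) + (x^q)(-r x^{s-1}) \cdot (\pm 1) = -(p+r)x^{q+s-1}$ up to the common sign, which is exactly what is wanted; the only care needed is confirming the corrector term $\Tr V_{(p-r)/(q-s)}$ truly has degree strictly below $q+s-1$, which follows from $q - s \le q + s - 2$ whenever $s \ge 1$, the generic case, with the cases involving $q=0$ or $s=0$ handled directly from the initial triple.
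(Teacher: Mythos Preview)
Your proposal is correct and follows essentially the same approach as the paper: all three parts are proved exactly as you outline, namely (1) by induction along the Farey tree using the move rule $u' = 2 - vw - u$, (2) by invoking Proposition~\ref{curveequality} (with the tree-symmetry argument as an alternative), and (3) by the involution $x \mapsto 1-x$ swapping the initial labels at $0/1$ and $1/1$. The paper's own proof is in fact much terser than yours---it simply asserts that (1) holds for the initial triple and ``follows easily by induction on the tree''---so your detailed tracking of the top two coefficients and the sign bookkeeping (the extra $-1$ from the $-vw$ in the move rule being exactly what reconciles $(-1)^{(p-q-1)+(r-s-1)}$ with $(-1)^{(p+r)-(q+s)-1}$) is a genuine fleshing-out rather than a departure; note however that the claimed formula is only meant to hold on a fundamental domain such as $0 \le p/q \le 1$, so your suggested side-checks at $-1/1$ and $2/1$ are unnecessary and would in fact not match the stated top two terms.
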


\begin{remark}\rm{(1) should be compared to~\cite{ksriley} Corollary 4.3 in which we showed   that the leading term is of the form $(-1)^{p-q-1}c x^q$ for some $c>0$, see also the remark following the corollary in that paper. }\end{remark}
\begin{proof}  (1) 
Note that (1) holds for the three initial traces of $  \gamma_{0/1},  \gamma_{1/0}
,  \gamma_{1/1} $. If curves $  \gamma_{p/q},  \gamma_{r/s}$ are adjacent along an edge, then the two curves at the remaining vertices at the ends of the edge are $  \gamma_{p\pm r/q \pm s}$.
The result then follows easily by induction on the tree.

(2) This follows immediately from Proposition~\ref{curveequality} and can also be proved easily  by looking at the symmetries of the $\S$-tree.

(3) This results from the symmetry $ x \mapsto 1-x$ which interchanges $  \gamma_{0/1},  \gamma_{1/1}$.

\end{proof}

Now we can prove the `only if' assertion of Proposition~\ref{curveequality}:
\begin{corollary} \label{curveinequality}  If $p'/q' \neq \pm p/q  + 2k, k \in \ZZ$ then $\gamma_{p/q} \not\sim \gamma_{p'/q'}$.

\end{corollary}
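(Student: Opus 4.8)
The plan is to use the trace polynomials of Proposition~\ref{prop:tracepoly} as a complete invariant distinguishing non-equivalent curves. Since Proposition~\ref{curveequality} has already established that $p'/q' = \pm p/q + 2k$ implies $\gamma_{p/q} \sim \gamma_{p'/q'}$, what remains is the converse: I need to show that if $\gamma_{p/q} \sim \gamma_{p'/q'}$ then their trace polynomials coincide identically in $x$, and then that equality of trace polynomials forces $p'/q' = \pm p/q + 2k$.

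The first half is immediate: homotopic curves in $\S$ are represented by conjugate elements of $\pi_1(\S)$, hence have the same trace under $\rho_{\S}(x)$ for every $x$, so $\Tr V_{p/q}(x) \equiv \Tr V_{p'/q'}(x)$ as polynomials. For the second half, I would argue by contradiction using part (1) of Proposition~\ref{prop:tracepoly}: $\Tr V_{p/q}$ has degree exactly $q$ (assuming $q \geq 1$; the cases $q=0$ being handled separately since $\gamma_{1/0} = \dd m$ is characterized by being the unique curve with trivial image, or by $q=0$), and its two top terms are $(-1)^{p-q-1}(x^q - p x^{q-1})$. Thus equality of the polynomials forces first $q = q'$ (comparing degrees, using $p/q,p'/q'$ in lowest terms with $q,q' \geq 1$), and then comparing leading coefficients gives $(-1)^{p-q-1} = (-1)^{p'-q-1}$, i.e. $p \equiv p' \pmod 2$, while comparing the next coefficient gives $(-1)^{p-q-1} p = (-1)^{p'-q-1} p'$, hence $p = p'$. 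This would only recover $p'/q' = p/q$, which is too strong — so the argument must instead be run modulo the symmetries already known.

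The correct formulation is: the trace polynomial, together with the known relations $\Tr V_{p/q} = \Tr V_{(p/q)+2} = \Tr V_{-p/q}$ from part (2), must be shown to separate the equivalence classes $\{ \pm p/q + 2k \}$. So I would normalize each class by choosing its representative $p/q$ with $q \geq 1$ and $0 \leq p < 2q$ (using the $+2$ periodicity to fix $p \bmod 2q$, and then $p \mapsto -p$ to further reduce, say, to $0 \leq p \leq q$); distinct classes give distinct such normalized representatives. Then, given $\gamma_{p/q} \sim \gamma_{p'/q'}$, pass to normalized representatives $p_0/q$ and $p_0'/q'$; since the trace polynomials agree, degree comparison gives $q = q'$, and then I need that among the finitely many normalized representatives with denominator $q$, the pair (leading sign, subleading coefficient) $= ((-1)^{p_0-q-1}, (-1)^{p_0-q-1} p_0)$ — equivalently the integer $(-1)^{p_0 - q - 1} p_0$ together with $p_0 \bmod 2$ — is injective as a function of $p_0$ in the reduced range. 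Since the normalized $p_0$ lies in a range of length $q$ (or $2q$ before the $p\mapsto -p$ reduction) and recovering $p_0$ from its sign and absolute value is transparent on $0 \leq p_0 \leq q$, this injectivity holds; hence $p_0 = p_0'$ and the classes coincide, which is the contrapositive of the claim.

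The main obstacle is purely bookkeeping: making the normalization of representatives precise so that the two known symmetries $p/q \mapsto p/q + 2$ and $p/q \mapsto -p/q$ are exactly accounted for, and checking that within a fixed denominator the top-two-terms data of Proposition~\ref{prop:tracepoly}(1) distinguishes the finitely many remaining representatives — in particular handling the boundary cases $p_0 = 0$ (the curve $\gamma_{0/1} = K_0K_1$) and $p_0 = q$, and the degenerate $q = 0$ case, where one falls back on the direct identification $\gamma_{1/0} = \dd m$ rather than on trace polynomials. No genuinely hard step is involved once Proposition~\ref{prop:tracepoly} is in hand; the corollary is essentially its immediate consequence.
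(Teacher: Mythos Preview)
Your proposal is correct and follows essentially the same approach as the paper, which dispatches the corollary in a single line: ``This follows immediately by comparing the top two terms of $\Tr W_{p/q}, \Tr W_{p'/q'}$.'' You have simply made explicit the normalization step (reducing to a representative with $0 \le p \le q$ using the known symmetries in Proposition~\ref{prop:tracepoly}(2)) that the paper leaves implicit, and then carried out the degree-and-subleading-coefficient comparison in detail; the detour in your second paragraph, where you first run the comparison without normalizing and find the conclusion ``too strong,'' is unnecessary but harmless.
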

\begin{proof} This follows immediately by comparing the top two terms of $\Tr W_{p/q}, \Tr W_{p'/q'}$.
\end{proof}

\subsection{Step 3. The exceptional Fuchsian case: computation of $\P_{0/1}, \P_{1/1}$} \label{sec:fuchsian}

 As above, let $\P_{p/q}$ denote the pleating ray of $\gamma_{p/q}$.   
The rays $\P_{0/1}, \P_{1/1}$ are exceptional. We have $\Tr \gamma_{0/1}= x, \Tr \gamma_{1/1} = 1-x$. Thus
the real locus for both trace polynomials is exactly the real axis, and on this locus, the group $G_{\S}(x)$, if discrete, is Fuchsian.
This is exactly the situation discussed in~\cite{ksriley} p. 84.

In the ball model of $\HH^3$,  identify the extended real axis with the equatorial circle. Since the limit set is contained in $\hat \RR$, the convex core (the Nielsen region) of $G_{\S}(x)$
 is contained in the equatorial plane. We can think that the convex core has been squashed flat and the bending lines are just the boundary of the Nielsen region, that is, the boundary of the surface $\HH^2/G_{\S}(x)$. Thus to find the bending lamination we just have to determine  the boundary of $\HH^2/G_{\S}(x)$. 
 
 Now if  $x \in \RR$ then either $\z \in \RR$ and  $x>0$, or $\z \in i\RR$ and $x<0$. 
 In both cases, we find a fundamental domain for $G_{\S}(x)$ as  described in~\ref{sec:dirichlet}, see Figure~\ref{fig:fuchsianconfigs}. Thus regarded as a Fuchsian group acting on the upper half plane $\HH$, $G_{\S}(x)$ represents a sphere with two order $3$ cone points and one hole.  However the cases $ x<0, x>0$ are slightly different, because of the relative directions of rotation of $K_0$ and $K_1$. 
 
 In both cases, the axis $K_0$ has fixed points $\pm i \sqrt 3$ and its axis is oriented so that it is anticlockwise rotation about $  i \sqrt 3$. Thus $K_1 = PK_0P^{-1}$ rotates anticlockwise  about 
 $P(  i \sqrt 3) = -i \z ^2/\sqrt 3 $. If $x<0$ then  $P(  i \sqrt 3)$ is in the upper half plane $\HH$ while  if $x>0$ then  $P(  i \sqrt 3)$ is in the lower  half plane. Hence if $x<0$ then $K_0, K_1$ rotate in the same sense about their fixed points in $\HH$ while if $x>0$ their rotation  directions are opposite. This leads to the two different configurations shown in Figure~\ref{fig:fuchsianconfigs}.
 
 As is easily checked, if $x>0$ the boundary of the hole is thus $K_0K_1^{-1}$ while if $x<0$ the boundary of the hole is $K_0K_1$.  Since $K_0K_1 = \gamma_{0/1}$ and $K_0K_1^{-1} = \gamma_{1/1}$, combining this with information about the discreteness locus in the Fuchsian case from~\ref{sec:dirichlet},  we conclude that $\P_{0/1} = (-\infty, -2]$ and $ \P_{1/1}= [3, \infty)$.

 \begin{figure}[ht] 
 \includegraphics[width=6cm]{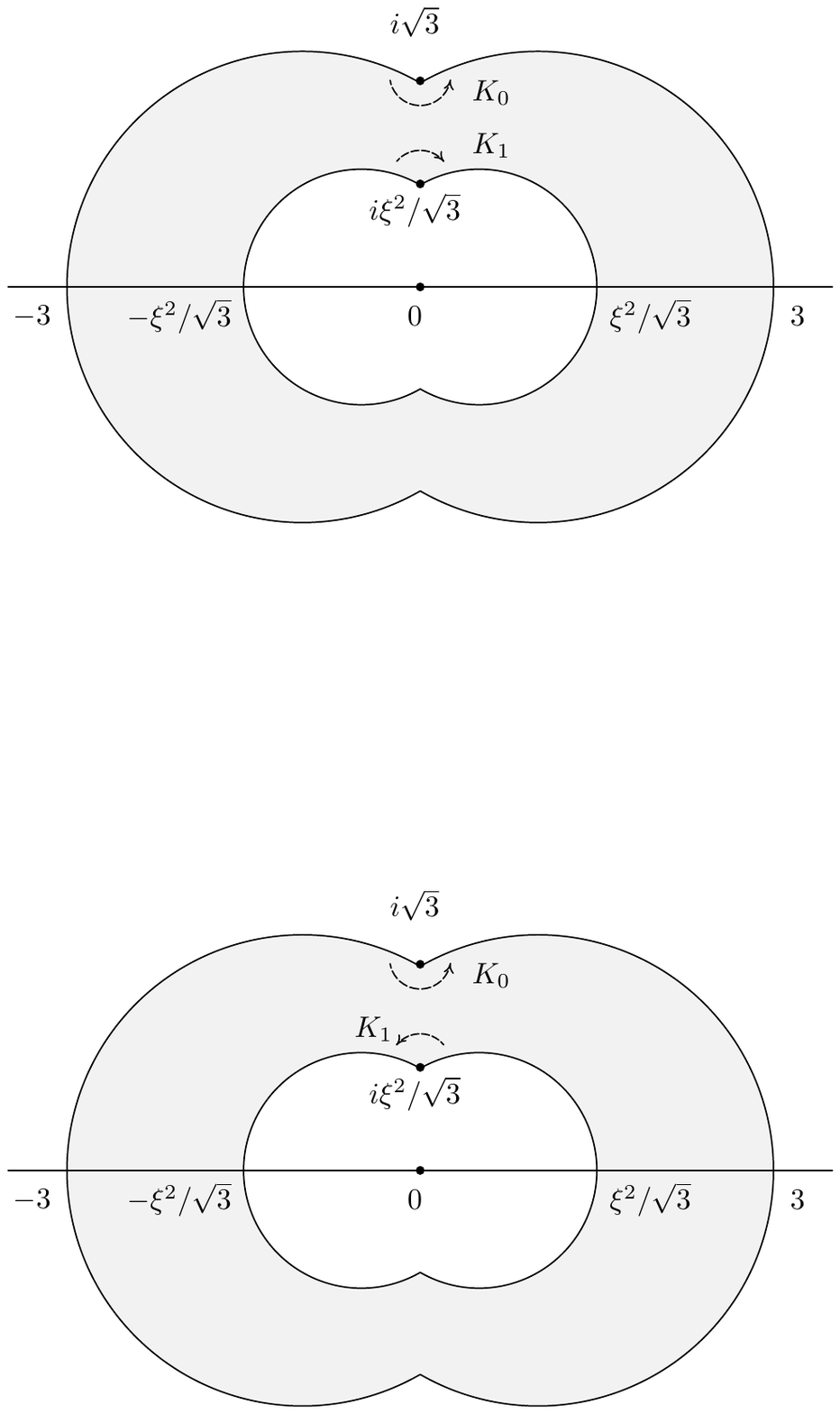}
  \includegraphics[width=6cm]{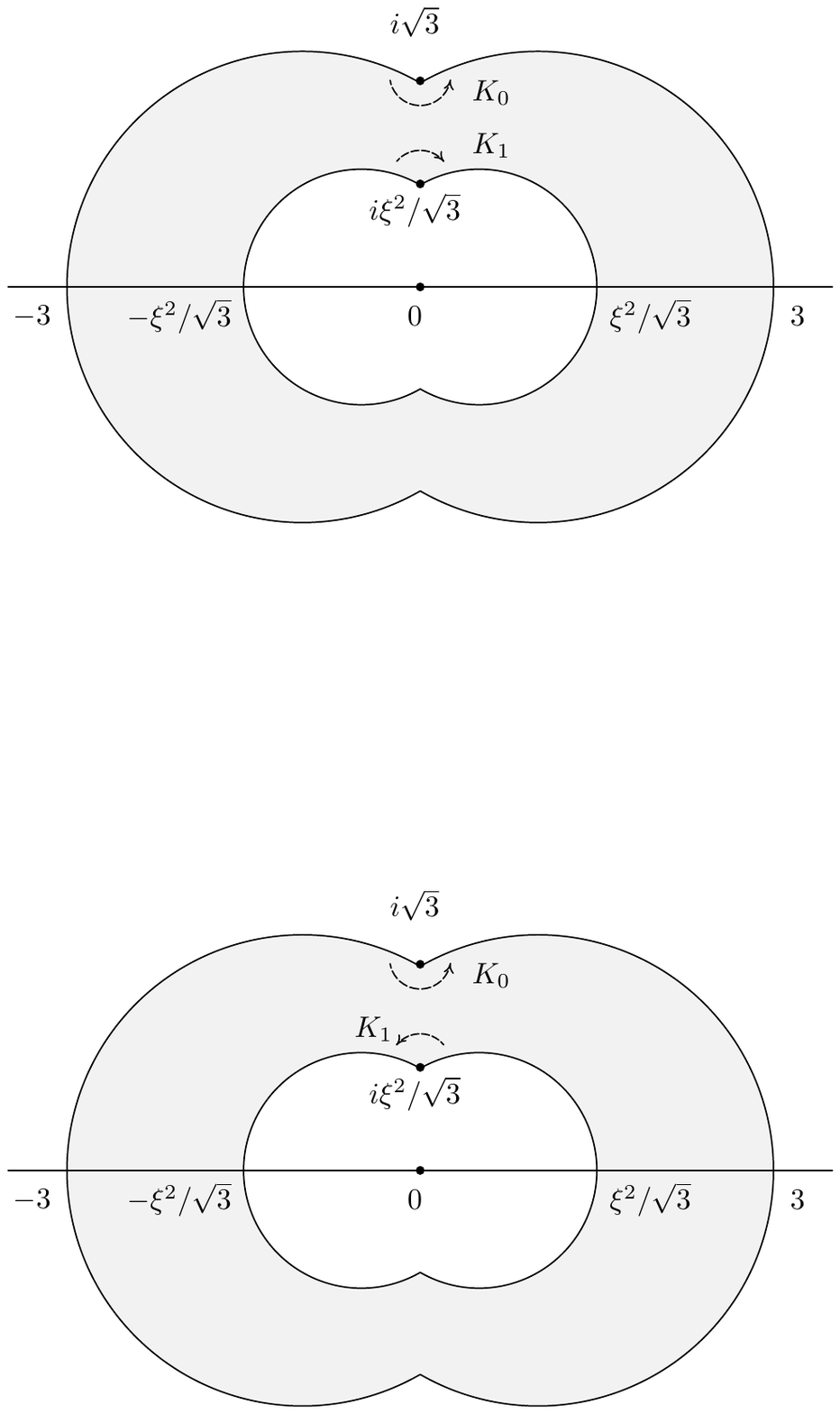}
 \caption{Configurations  for $x \in \RR$. Left: $\z \in i\RR, x\leq -2$. $K_0$ and $ K_1$ rotate in the same  directions   $\HH$ and the hole is $K_0K_1$.
Right: $\z \in  \RR, x\geq 3$.  $K_0$ and $ K_1$ rotate in opposite directions in $\HH$ and the hole is $K_0K_1^{-1}$.}\label{fig:fuchsianconfigs}
\end{figure}

\subsection {Step 4. Non-singularity of pleating rays}\label{sec:nonsingular}
   
 This is the part of the argument which contains the deepest mathematics. Fortunately the results needed have been proved elsewhere.

 \begin{theorem}[\cite{ksqf, ksriley, chois}] \label{thm:nonsing}
 Suppose that $\gamma \in \pi_1(\S)$. Then $\P_{\gamma}$ is open and closed in the real trace locus $\RR_{\g}$. 
 Moreover $\Tr \rho_x(\g)$ is a local coordinate for $\CC$  in a neighbourhood of $\P_{\gamma}$, and  is a global coordinate for $\P_{\gamma}$ on any non-empty connected component of $\P_{\gamma}$. 
\end{theorem}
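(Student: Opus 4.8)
The plan is to treat Theorem~\ref{thm:nonsing} as essentially a translation of the general Keen--Series pleating-ray machinery to the present setting, rather than reproving anything from scratch. The key point is that the orbifold $\S$ is geometrically finite with a single boundary component $\dd \S = \Sigma_{0;3,3,3,3}$, so the relevant deformation space is one-complex-dimensional, parametrized by $x$, and the bending lamination of the curve $\gamma$ on $\dd \C / G_{\S}(x)$ varies real-analytically on the locus where it is the support of the bending measure. I would begin by setting up the quasifuchsian space $\QF(\dd \S)$ (or rather its one-dimensional slice cut out by the $3$-fold symmetry, which is exactly our family $x \mapsto G_{\S}(x)$), noting that by Corollary~\ref{discretetogether} and the discussion in Section~\ref{sec:discrete} this is a genuine slice of a quasifuchsian deformation space for which the general theory applies. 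The three ingredients I need from \cite{ksqf, ksriley, chois} are: (i) the bending map $\mathrm{bm}\co \D \to \mathcal{ML}(\dd\S)$ (projective bending lamination) is continuous and the set where the bending lamination equals a fixed rational $\gamma$ is the pleating ray $\P_\gamma$; (ii) on $\P_\gamma$ the bending angle $\theta_\gamma$ is a real-analytic function, and the map $x \mapsto (\ell_\gamma(x), \theta_\gamma(x))$, or equivalently $x \mapsto \Tr \rho_x(\gamma)$, is a local homeomorphism; (iii) $\P_\gamma$ is a closed subset of $\D$ and a relatively closed-and-open subset of the real trace locus $\RR_\gamma$.

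Concretely, the steps in order: First, recall from Lemma~\ref{lemma:realtrace} that $\P_\gamma \subset \RR_\gamma$, so it suffices to analyze $\P_\gamma$ as a subset of the real-analytic curve $\RR_\gamma = \{x : \Tr\rho_x(\gamma) \in (-\infty,-2]\cup[2,\infty)\}$. Second, invoke the openness: the core fact (from \cite{chois} Proposition~4.1 and the quasifuchsian interior results of \cite{ksqf}) is that if $\gamma$ is a bending line at $x_0 \in \D$ with positive bending angle, then for $x$ near $x_0$ in $\D$ the bending lamination remains $\gamma$ and $\Tr\rho_x(\gamma)$ stays real; moreover the complex length $\lambda_\gamma$ is a local biholomorphism $\CC \to \CC$ near $x_0$, hence $\Tr\rho_x(\gamma) = 2\cosh(\lambda_\gamma/2)$ serves as a local coordinate. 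This gives that $\P_\gamma$ is open in $\RR_\gamma$ and that the trace is a local coordinate near it. Third, establish closedness in $\RR_\gamma$: a sequence $x_n \in \P_\gamma$ converging to $x_\infty \in \RR_\gamma$ has $G_{\S}(x_n)$ discrete with bending lamination $\gamma$; by the usual compactness/properness arguments for geometrically finite groups (the limit is again discrete and geometrically finite, since we stay in a slice where algebraic and geometric convergence coincide and no accidental parabolics or degenerations occur — the excluded cases $\theta_\gamma = 0, \pi$ are handled separately as in Section~\ref{sec:fuchsian} and the remark after Lemma~\ref{lemma:realtrace}), and continuity of the bending map forces the limiting bending lamination to still be supported on $\gamma$, so $x_\infty \in \P_\gamma$. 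Fourth, deduce that $\Tr\rho_x(\gamma)$ is a global coordinate on each connected component of $\P_\gamma$: on a component it is a local homeomorphism into $\RR$, hence an open map to an interval, and properness (from step three) makes it a covering of an interval, which being simply connected forces it to be a homeomorphism.

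The hard part — or rather the part requiring the most care — will be step three, the closedness and the control of limiting behavior: one must rule out that, as $x_n \to x_\infty \in \RR_\gamma$, the groups $G_{\S}(x_n)$ could fail to stay discrete, or that the bending lamination could jump (for instance $\gamma$ could become parabolic, or the surface could become totally geodesic so $\theta_\gamma \to 0$, or $x_\infty$ could lie on the Fuchsian locus). In the quasifuchsian setting all of this is handled by the deep results of \cite{ksqf} on the structure of $\QF$ and the rigidity coming from real traces, and in \cite{ksriley} by the explicit analysis of the Riley slice; since our family is literally a symmetric slice of such a space, I would cite those results directly rather than rederive them, emphasizing only that nothing in the orbifold setting (cone points of order $3$ in place of punctures) obstructs the arguments — the convex core boundary $\dd\C/G_{\S}(x)$ is still a hyperbolic surface with a pleating lamination, the complex length function is still holomorphic, and Lemma~\ref{lem:commensurable} lets us pass freely between $\S$, $\H$, $\T$ and $\U$ when convenient. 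Thus the proof is essentially an application of Theorem quoted from \cite{ksqf, ksriley, chois} after checking that our $G_{\S}(x)$ family is an instance of the general framework; I would write it up as such, with the verification of that last point being the only genuinely new content.
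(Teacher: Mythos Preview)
Your overall strategy matches the paper's: both treat the theorem as an import from the general Keen--Series theory, with openness and the local-coordinate property cited from \cite{ksqf, ksriley, chois} and only closedness requiring a short argument in situ.

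The one substantive difference is in your closedness step. You argue that the algebraic limit $G_{\S}(x_\infty)$ is discrete, geometrically finite, and then invoke continuity of the bending map to conclude the bending lamination is still $\gamma$. This is morally right but delicate: continuity of the bending map (as in \cite{kscont}) is established on the interior of the discreteness locus, so you must first know $x_\infty \in \D$, and you must also handle the degenerate possibilities $\theta_\gamma \to 0$ (Fuchsian limit) and $\theta_\gamma \to \pi$ (cusp) where the projective bending class may not converge naively. The paper sidesteps this by a more hands-on argument: the two flat pieces of $\dd\C/G_{\S}(x_n)$ on either side of $\gamma$ correspond to Fuchsian subgroups $F_j(x_n)$; since the convergence is algebraic these limit on Fuchsian subgroups $F_j(x_\infty)$, and the Nielsen regions of all conjugates fit together along lifts of $\gamma$ to form a pleated surface which one then identifies with $\dd\C(x_\infty)$. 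This $F$-peripheral-subgroup argument (originating in \cite{ksriley}) works uniformly across the edge cases and does not presuppose continuity of the bending map, which is why the paper prefers it. Your route can be made to work, but you would need to justify discreteness of the limit (Chuckrow/J{\o}rgensen suffices) and then treat the $\theta_\gamma \in \{0,\pi\}$ endpoints explicitly rather than deferring them.
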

\begin{proof} The  statement  that   $\P_{\gamma}$ is open in $\RR_{\g}$ is essentially \cite{ksriley} Proposition 3.1, see also   \cite{ksqf} Theorems 15  and 26.  The fact that $\Tr \rho_x(\g)$ is a local parameter is equivalent to the fact, also proved in both \cite{ksriley}  and   \cite{kstop}, that $\P_{\gamma}$ is a non-singular $1$-manifold. The open-ness and the final statement are actually  a 
special case of  Theorems B and C  of \cite{chois} which state that for general hyperbolic manifolds, if the support of the bending lamination is a union of closed curves  is rational, then the traces of  these  curves are local parameters for the deformation space in a neighbourhood of  the corresponding pleating variety.

That   $\P_{\gamma}$ is closed in $\RR_{\g}$ can be  proved as in~\cite{ksriley} Theorem 3.7. Here is a slightly more sophisticated version of the same idea. Suppose $x_n \to x_{\infty}$ with $x_n \in \P_{\gamma}$. The limit group 
$G_{\S}(x_{\infty})$ is an algebraic limit of groups $G_{\S}(x_n)$ and hence the corresponding representation is discrete and faithful. Each of the two components of $(\dd \C/G_{\S}(x_n)) \setminus \g $ is a flat surface corresponding to a conjugacy class of Fuchsian subgroup $F_j(x_n), j=1,2$ (the $F$-peripheral subgroups of~\cite{ksriley}).  Since the limit is algebraic, $F_j(x_n)$ limits on a Fuchsian subgroup $F_j(x_{\infty})$, and similarly for all its conjugates in $G_{\S}(x_{\infty})$. 

The  limit sets $\Lambda_{\alpha}$ of each of these subgroups $F_{\alpha}$ is spanned by a hyperbolic  plane $H_{\a}$ in $\HH^3$. The Nielsen regions  of  $F_{\alpha}$ in $H_{\a}$ fit together along the
 lifts of the bending line $\g$ to $\HH^3$, forming a pleated surface $\Pi$ in $\HH^3$. We claim that $\Pi =  \dd \C (x_{\infty})$. This follows since the closure of the union of the $\Lambda_{\a}$ is the limit set of $G(x_{\infty})$, see also
 Proposition 7.2 in~\cite{kshowtobend}. The  result follows.   \end{proof}

\begin{remark} \rm{The closure of $\P_{\g}$ in $\RR_{\g}$ is a simple case of both the `local limit theorem', Theorem 15 in~\cite{ksqf} and the  `lemme de fermeture' of~\cite{BonO}. These  much more sophisticated results allow that the bending lines may be part of an irrational lamination. Our argument above, in which the bending lamination is supported on closed curves,  is very close to that in the first part of the proof of Theor{\`e}me 6 in~\cite{otal}. }   \end{remark}

\begin{corollary}[\cite{kstop, ksriley, chois}]  If $\P_{\gamma} \neq \emptyset$, then it is a union of  connected non-singular branches of the real trace locus $\RR_{ \gamma}$. 
\end{corollary}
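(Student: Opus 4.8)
The plan is to read the corollary off from Theorem~\ref{thm:nonsing} together with the elementary structure of the real trace locus as the preimage of a real ray under a polynomial map. Write $p_{\gamma}(x) = \Tr \rho_x(\gamma)$, a polynomial in $x$ by Proposition~\ref{prop:tracepoly}, and set $J = (-\infty,-2] \cup [2,\infty)$, so that $\RR_{\gamma} = p_{\gamma}^{-1}(J)$. By Lemma~\ref{lemma:realtrace} we already have $\P_{\gamma} \subset \RR_{\gamma}$. The assertion of Theorem~\ref{thm:nonsing} that $p_{\gamma}$ is a local coordinate for $\CC$ on some neighbourhood $N$ of $\P_{\gamma}$ means precisely that $p_{\gamma}'$ is nowhere zero on $N$; in particular $N$ contains no critical point of $p_{\gamma}$.

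Next I would describe $\RR_{\gamma}$ near $\P_{\gamma}$. At each $x_0 \in N$ the map $p_{\gamma}$ is a local biholomorphism, so $\RR_{\gamma} \cap N = p_{\gamma}^{-1}(J) \cap N$ is, in a neighbourhood of $x_0$, carried homeomorphically by $p_{\gamma}$ onto a neighbourhood of $p_{\gamma}(x_0)$ in $J$. Such a neighbourhood in $J$ is either an open arc, when $p_{\gamma}(x_0)$ lies in the interior of $J$, or a half-open arc with endpoint $p_{\gamma}(x_0)$, when $p_{\gamma}(x_0) = \pm 2$. Hence $\RR_{\gamma} \cap N$ is an embedded, non-singular real-analytic $1$-manifold with boundary: no critical points of $p_{\gamma}$ and no self-crossings occur inside $N$.

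Finally I would combine this with the topological clauses of Theorem~\ref{thm:nonsing}. Since $\P_{\gamma}$ is open and closed in $\RR_{\gamma}$, it is a union of connected components of $\RR_{\gamma}$; each such component, being contained in $\P_{\gamma} \subset N$, consists entirely of regular points of $p_{\gamma}$ by the previous paragraph, hence is a non-singular branch of $\RR_{\gamma}$. Moreover $p_{\gamma}$ is a global coordinate on each connected component of $\P_{\gamma}$, so it is injective there; since $J \subset \RR$ admits no continuous injection of a circle, each branch is an embedded arc rather than a loop, and the corollary follows. There is no deep obstacle here, as Theorem~\ref{thm:nonsing} carries all the analytic weight; the only point to handle with some care is the logical interplay between its two halves. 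Being open and closed in $\RR_{\gamma}$ tells us merely that $\P_{\gamma}$ is a union of components of $\RR_{\gamma}$, which a priori could contain singular points; it is the local-coordinate clause, applied not just pointwise but to the whole of each such component --- all of which lies in $N$ --- that upgrades these components to non-singular branches.
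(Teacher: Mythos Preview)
Your proof is correct and follows essentially the same approach as the paper: both invoke Theorem~\ref{thm:nonsing} to get that $\P_{\gamma}$ is open and closed in $\RR_{\gamma}$ and that $\Tr \rho_x(\gamma)$ is a local coordinate near $\P_{\gamma}$, then conclude that each component is a non-singular $1$-manifold. You have simply spelled out more detail (the local biholomorphism description of $\RR_{\gamma}\cap N$, the boundary case at $\pm 2$, and the arc-versus-loop remark), but the logical skeleton is identical.
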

\begin{proof}  Suppose that $\P_{\gamma} \neq \emptyset$ and let $x \in \P_{\gamma}$, so that by Lemma~\ref{lemma:realtrace}, $x \in \RR_{\g}$.  By Theorem~\ref{thm:nonsing},  $\P_{\g}$ is open and closed in $\RR_{\g}$. Since $\Tr \gamma$ is a local coordinate, in a neighbourhood of $x$ the locus $\RR_{\g}$
is a $1$-manifold.  
\end{proof} 

Notice that the theorem says that $\Tr \rho_x(\g)$ is a local parameter even in a neighbourhood of a cusp where $ \rho_x(\g)$ is parabolic,~\cite{ chois} Theorem C.
Thus we have
\begin{corollary} Suppose that $ x \in \P_{\gamma}$. Then there is a neighbourhood of $x$ in $\CC$ on which $x \in \RR_{\g}$ implies that $ x \in \D$.
\end{corollary}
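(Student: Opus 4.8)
The claim is essentially a reformulation of the openness half of Theorem~\ref{thm:nonsing}, so the plan is short. The structural point to keep in mind is that, by the very definition of a pleating ray, $\P_\gamma \subseteq \D$: every $x \in \P_\gamma$ is by construction a parameter for which $G_{\S}(x)$ is free and discrete (indeed with bending lamination equal to $\gamma$). Hence it suffices to produce a neighbourhood $N$ of $x$ in $\CC$ with $N \cap \RR_\gamma \subseteq \P_\gamma$.

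I would obtain $N$ directly from Theorem~\ref{thm:nonsing}, which asserts that $\P_\gamma$ is open in $\RR_\gamma$ with its subspace topology. Applying this at the given point $x \in \P_\gamma \subseteq \RR_\gamma$ gives, by the definition of the subspace topology, an open set $N \subseteq \CC$ with $x \in N$ and $N \cap \RR_\gamma \subseteq \P_\gamma$. Then for any $y \in N$ we have the desired implication: if $y \in \RR_\gamma$ then $y \in N \cap \RR_\gamma \subseteq \P_\gamma \subseteq \D$. This $N$ is the required neighbourhood, and the corollary follows.

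There is no genuine obstacle remaining in the corollary itself; all the substance is already packaged inside Theorem~\ref{thm:nonsing}, which in turn rests on \cite{ksriley} Proposition~3.1 and \cite{chois} Theorems~B and~C --- and, at parameters where $\rho_x(\gamma)$ is parabolic, on the fact recorded in the remark following Theorem~\ref{thm:nonsing} that $\Tr \rho_x(\gamma)$ is still a local coordinate there, so that $\P_\gamma$ is open in $\RR_\gamma$ across cusps as well. Thus the only thing to be careful about when writing this up is that the openness statement of Theorem~\ref{thm:nonsing} is quoted in exactly the form used here (open in the subspace topology of $\RR_\gamma$, uniformly within the ambient $\CC$), which it is; the rest is a one-line deduction.
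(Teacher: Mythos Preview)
Your proposal is correct and matches the paper's own treatment exactly: the paper states this corollary with no proof beyond the preceding remark that $\Tr \rho_x(\gamma)$ remains a local coordinate even at cusps, so the openness of $\P_\gamma$ in $\RR_\gamma$ from Theorem~\ref{thm:nonsing} together with $\P_\gamma \subseteq \D$ immediately yields the claim. Your write-up is if anything more explicit than the paper's ``Thus we have''.
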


 \begin{corollary}\label{cor:unbounded} If $\P_{\gamma} \neq \emptyset$, then $\Tr \rho_x(\g)$ is unbounded on $\P_{\gamma}$. 
\end{corollary}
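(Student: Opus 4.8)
The plan is to argue by contradiction: suppose $\Tr\rho_x(\g)$ were bounded on $\P_{\gamma}$. Since $\P_\gamma\neq\emptyset$, the curve $\g$ is realised as a bending line of $\dd\C/G_{\S}(x)$ for some $x$, hence $\g$ is a non-trivial, non-peripheral element of $\pi_1(\S)$; in particular, by Proposition~\ref{prop:tracepoly}(1), $P(x):=\Tr\rho_x(\g)$ is a non-constant polynomial (its degree equals $q\geq1$). The first observation is that a non-constant polynomial which is bounded on a set forces that set to be bounded in $\CC$, since $|P(x)|\to\infty$ as $|x|\to\infty$; hence $\P_\gamma$ would be bounded. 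The second observation is that $\P_\gamma$ is closed in $\CC$: the real trace locus $\RR_{\g}=P^{-1}\bigl((-\infty,-2]\cup[2,\infty)\bigr)$ is closed in $\CC$ as the preimage of a closed set under a continuous map, and by Theorem~\ref{thm:nonsing} $\P_\gamma$ is closed in $\RR_{\g}$. Thus $\P_\gamma$ would be compact.

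Next I would fix a connected component $C$ of $\P_\gamma$. By Theorem~\ref{thm:nonsing} and its corollary, $\P_\gamma$ is a non-singular $1$-manifold; since $1$-manifolds are locally connected, $C$ is open in $\P_\gamma$, hence closed, hence compact. A compact connected $1$-manifold is homeomorphic to a circle or to a closed interval. But by Theorem~\ref{thm:nonsing} the restriction $P|_C$ is a global coordinate, i.e.\ an injective continuous map $C\to\RR$; a continuous injection from a compact space to a Hausdorff space is a homeomorphism onto its image, and a compact connected subset of $\RR$ is a point or a closed bounded interval, so $C$ cannot be a circle. Therefore $C$ is a closed arc with two endpoints.

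The crucial step is to rule out these endpoints. Let $x_0$ be an endpoint of $C$. By Theorem~\ref{thm:nonsing}, $P$ is a local coordinate for $\CC$ near $x_0$, i.e.\ a local biholomorphism there. If $P(x_0)\notin[-2,2]$, then near $x_0$ the locus $\RR_{\g}$ is just $P^{-1}$ of a real sub-interval of $(-\infty,-2)\cup(2,\infty)$ passing through the interior point $P(x_0)$, hence a smooth arc with $x_0$ in its interior; since $\P_\gamma$ is open in $\RR_{\g}$, it then contains a sub-arc neighbourhood of $x_0$, which lies in $C$ by connectedness, contradicting that $x_0$ is an endpoint of $C$. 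Therefore $P(x_0)=\pm2$, so $\rho_{x_0}(\g)$ has trace $\pm2$; it cannot be $\pm\mathrm{id}$, since $\rho_{x_0}$ is faithful ($x_0\in\D$) and $\g\neq1$, so $\rho_{x_0}(\g)$ is parabolic. But a parabolic element has no axis in $\HH^3$, so $\g$ cannot be a bending line of $\dd\C/G_{\S}(x_0)$, i.e.\ $x_0\notin\P_\gamma$ — contradicting $x_0\in C$. This contradiction shows that $\P_\gamma$ is not compact, and hence $\Tr\rho_x(\g)$ is unbounded on $\P_\gamma$.

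All the substantive input — closedness of $\P_\gamma$ in $\RR_{\g}$, the fact that it is a non-singular $1$-manifold, and that $\Tr\rho_x(\g)$ is a local coordinate for $\CC$ and a global coordinate along $\P_\gamma$ — is already provided by Theorem~\ref{thm:nonsing}, so I do not expect any genuine analytic obstacle. The one place that requires care is the endpoint argument: one must observe that $\P_\gamma$ has empty boundary, precisely because a boundary point would make $\g$ parabolic and a parabolic is not a bending line. Everything else is elementary point-set topology.
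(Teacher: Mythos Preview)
Your argument is correct in spirit and is a careful expansion of the paper's one-line proof, which simply invokes ``the maximum principle on the branch'' (citing \cite{ksriley} Theorem~4.1). The paper's intended argument is essentially your step~7, applied at the endpoint of $C$ where $|P|$ is maximal: since $P$ is a global coordinate (hence injective) on $C$ and $P(C)$ is a non-degenerate interval in $[2,\infty)$ or $(-\infty,-2]$, that maximal value is strictly bigger than $2$ in absolute value, and then step~7 yields the contradiction directly without any case analysis at $\pm 2$.

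Your step~9 is the one place to be careful. The paper appears to \emph{include} cusp points (where $\rho_x(\gamma)$ is parabolic) in $\P_\gamma$: it writes $\P_{0/1}=(-\infty,-2]$ with $\Tr\gamma_{0/1}=x$, so at $x=-2$ the element is parabolic yet the point is listed in the ray; the remark after Theorem~\ref{thm:nonsing} stresses that the local-coordinate statement holds ``even in a neighbourhood of a cusp''; and the closedness of $\P_\gamma$ in $\RR_\gamma$ asserted in Theorem~\ref{thm:nonsing} would fail at the cusp end otherwise. Under that convention your inference ``parabolic $\Rightarrow x_0\notin\P_\gamma$'' is not available. Fortunately you do not need it: by your step~7 every endpoint of $C$ would have $P=\pm 2$, but $P$ is injective on $C$ and $P(C)$ is a connected subset of $[2,\infty)$ or of $(-\infty,-2]$, so at most one of the two endpoints can satisfy $|P|=2$; the other already contradicts step~7. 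That finishes the argument without invoking the parabolic exclusion, and is precisely the ``maximum principle'' the paper has in mind.
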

\begin{proof}
Since $\Tr \rho_x(\g)$  is a local coordinate on connected components of $\P_{\gamma}$, this follows from the maximum principle on the branch, see~\cite{ksriley} Theorem 4.1. 
\end{proof}

\subsection{Step 5. Finding the non-empty pleating rays} \label{sec:general rays}

Now we determine the pleating rays. As above, let $\P_{p/q}$ denote the ray corresponding to 
the curve $\g_{p/q}$ and write $\RR_{p/q}$ for the real locus of $\Tr V_{p/q}$. From Proposition~\ref{curveequality} we have $\P_{p/q} = \P_{(p+2q)/q}= \P_{-p/q}$.

By~\ref{sec:nonsingular}, $\P_{p/q}  $ is a union of non-singular branches of $\RR_{p/q}$.
We  now find those $p/q \neq \{0,1\}$ for which $\P_{p/q} \neq \emptyset$,  at the same time resolving the connectivity issue.   We follow the method of \cite{ksriley}, using  an inductive argument on position of the pleating rays and their asymptotic directions as $|x| \to \infty$, and at the same time correcting the second of the two errors referred to in Remark~\ref{Komori issue}.   We have:
 \begin{proposition}[c.f.~\cite{ksriley} Theorem 4.1]\label{prop:asympdirn}
The set $\P_{p/q}$ is the union of the two branches  of $\RR_{p/q} $ which are asymptotic to the half lines $\rho e^{\pm i \pi (p-q)/q}$ as $\rho \to \infty$.
 \end{proposition}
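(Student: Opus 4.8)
The plan is to follow the inductive scheme of~\cite{ksriley} Theorem~4.1, in the corrected form of~\cite{koms}, feeding the information of Steps~2--4 around the Farey tree. First I would set up the behaviour at infinity. By Proposition~\ref{prop:tracepoly}, $\Tr V_{p/q}$ is a polynomial in $x$ of degree $q$ with leading term $(-1)^{p-q-1}x^{q}$, so outside a large disc the real trace locus $\RR_{p/q}=\{x:\Tr V_{p/q}(x)\in(-\infty,-2]\cup[2,\infty)\}$ consists of $2q$ disjoint smooth arcs, the arc indexed by $j\in\ZZ/2q\ZZ$ being asymptotic to the half-line $\rho e^{ij\pi/q}$ ($\rho\to\infty$). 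Writing $x=\rho e^{ij\pi/q}$ one gets $\Tr V_{p/q}(x)=(-1)^{p-q-1+j}\rho^{q}+O(\rho^{q-1})$, so along that arc $\Tr V_{p/q}\to-\infty$ exactly when $j\equiv p-q\pmod 2$; the subleading term $-p\,x^{q-1}$ of Proposition~\ref{prop:tracepoly} serves, as in~\cite{ksriley}, to locate each arc on a definite side of its half-line. Since $\Tr V_{p/q}$ has real coefficients and discreteness is preserved by $x\mapsto\bar x$ (Section~\ref{sec:symmetries}), both $\RR_{p/q}$ and $\P_{p/q}$ are invariant under complex conjugation. By Section~\ref{sec:nonsingular} (Theorem~\ref{thm:nonsing} and its corollaries) every component of $\P_{p/q}$ is a non-singular branch of $\RR_{p/q}$ on which $\Tr V_{p/q}$ is a global real coordinate, and by Corollary~\ref{cor:unbounded} this coordinate is unbounded there; hence each component is an arc running from a cusp on $\partial\D$ (where $|\Tr V_{p/q}|=2$) out to infinity along one of the half-lines $\rho e^{ij\pi/q}$ with $j\equiv p-q\pmod 2$. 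The base cases $p/q=0/1,\,1/1$ are exactly Step~3 (Section~\ref{sec:fuchsian}): $\P_{0/1}=(-\infty,-2]$ and $\P_{1/1}=[3,\infty)$ are the half-lines $e^{\mp i\pi}=-1$ and $e^{0}=1$, matching $e^{\pm i\pi(p-q)/q}$ (the two ends coinciding because $q=1$).

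Next I would run an induction on Farey depth whose engine is the disjointness of rational pleating rays of inequivalent slopes: if $x\in\P_{p/q}\cap\P_{r/s}$ then $\gamma_{p/q}$ and $\gamma_{r/s}$ would both support the rational bending lamination $\beta(G_{\S}(x))$, which (as recalled before Lemma~\ref{lemma:realtrace}) is a single simple closed curve, forcing $\gamma_{p/q}$ and $\gamma_{r/s}$ to be homotopic in $\S$. Assume inductively that for every slope of smaller depth the ray is the conjugate pair of branches asymptotic to $e^{\pm i\pi(p'-q')/q'}$; these rays, together with $\P_{0/1},\P_{1/1}$, cut $\D$ into cells. Given $p/q\in(0,1)$ in lowest terms ($q\ge2$), write it as the Farey mediant $p/q=(p_1+p_2)/(q_1+q_2)$ of its two neighbours $p_1/q_1<p_2/q_2$, so $p_2q_1-p_1q_2=1$; these have smaller depth, and the cell $W$ of $\D$ lying between $\P_{p_1/q_1}$ and $\P_{p_2/q_2}$ meets the circle at infinity in the arc of directions strictly between $e^{i\pi(p_1-q_1)/q_1}$ and $e^{i\pi(p_2-q_2)/q_2}$. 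The key elementary point is that the only half-line $\rho e^{ij\pi/q}$ with $j\equiv p-q\pmod 2$ landing in that open arc is $j=p-q$: from $p_1/q_1<p/q<p_2/q_2$ one has $\pi(p_1/q_1-1)<\pi(p/q-1)<\pi(p_2/q_2-1)$, while the identities $p_2q-pq_2=pq_1-p_1q=1$ give $\pi(p_2/q_2-p/q)=\pi/(q_2q)$ and $\pi(p/q-p_1/q_1)=\pi/(q_1q)$, each strictly less than the spacing $2\pi/q$ between consecutive same-parity half-lines, so the neighbouring candidates $j=p-q\pm2$ already fall outside $W$'s arc at infinity. Hence any branch of $\RR_{p/q}$ contained in $W$ is asymptotic to $e^{i\pi(p-q)/q}$.

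It then remains to prove $\P_{p/q}\neq\emptyset$, equivalently that $\RR_{p/q}$ really does have a branch inside $W$. Here I would argue as in~\cite{ksriley}: exhibit a point of $\D\cap W$ on $\RR_{p/q}$ — for instance the cusp on $\partial\D$ where $\rho_x(\gamma_{p/q})$ becomes accidentally parabolic (or, alternatively, deform along a path in $\D$ joining $\P_{p_1/q_1}$ to $\P_{p_2/q_2}$ and use continuity of the bending lamination together with the fact that $\gamma_{p_1/q_1}$ and $\gamma_{p_2/q_2}$ are neighbours). Since $\P_{p/q}$ is open and closed in $\RR_{p/q}$ (Theorem~\ref{thm:nonsing}), the component of $\RR_{p/q}$ through that point lies entirely in $\P_{p/q}$, and by the previous paragraph it is asymptotic to $e^{i\pi(p-q)/q}$; its complex conjugate is a second component, asymptotic to $e^{-i\pi(p-q)/q}$. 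The remaining point — that $\P_{p/q}$ has no further components, which is the issue missed in~\cite{ksriley} and resolved in~\cite{koms} — I would handle exactly as in~\cite{koms}, using the disjointness of rational rays and a count of the cells into which the rays of smaller depth divide $\D$: any third component would again end at a cusp of $\partial\D$ and be confined to a single such cell, and there is no room for it. This closes the induction and proves the proposition.

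The main obstacle is the step pinning the asymptotic direction down to \emph{exactly} $e^{i\pi(p-q)/q}$ rather than one of the other same-parity half-lines: it is forced only by combining the non-singular-branch structure of Section~\ref{sec:nonsingular} and the maximum principle (Corollary~\ref{cor:unbounded}) with the disjointness and ordering of rational rays seeded by the Fuchsian base cases, together with the Farey arithmetic confining a unique same-parity half-line to the cell $W$. The other delicate points, exactly as in~\cite{ksriley,koms}, are the bare non-emptiness of $\P_{p/q}$ (producing a point of $\D$ on the predicted branch) and the absence of further components; all of the analytically deep input needed for these and for Section~\ref{sec:nonsingular} has been quoted from~\cite{ksqf,ksriley,chois}.
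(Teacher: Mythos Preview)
Your inductive scheme is essentially the paper's: asymptotics of $\Tr V_{p/q}$ from Proposition~\ref{prop:tracepoly}, the Fuchsian base cases from Section~\ref{sec:fuchsian}, the non-singular/unbounded branch results of Section~\ref{sec:nonsingular}, and then induction on Farey depth using that between two neighbouring rays there is exactly one admissible asymptotic direction. Two points deserve tightening. First, in your non-emptiness step you write ``equivalently that $\RR_{p/q}$ really does have a branch inside $W$'' and offer option~(a), exhibiting the accidental-parabolic cusp: neither of these is legitimate, since a branch of $\RR_{p/q}$ in $\D$ need not lie in $\P_{p/q}$, and the existence of the cusp is essentially what you are trying to prove. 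Your option~(b) is the right one and is precisely what the paper does, invoking the continuity theorem of~\cite{kscont} to find, along a path in $\D$ from $\P_{p_1/q_1}$ to $\P_{p_2/q_2}$, a point whose bending lamination is $\gamma_{p/q}$; only then does ``open and closed in $\RR_{p/q}$'' let you pass to the whole branch. Second, for the bound of at most two components you invoke the cell-counting argument of~\cite{koms}; the paper instead separates this off as Proposition~\ref{prop:connectivity}, doubling the convex core and using the Hodgson--Kerckhoff/Bonahon--Otal rigidity of cone-manifolds together with the observation that $\D\setminus\RR$ carries two opposite orientations. The paper explicitly remarks that the more \emph{ad hoc} route you sketch also works, so this is a genuine but acknowledged alternative rather than a gap.
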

\begin{proof} 
Denote by $R(\theta)$ the ray  $te^{i \theta}, t >0$, in the $x$-plane. By Proposition~\ref{prop:tracepoly}, $\Tr V_{p/q}$ is a polynomial in $x$ whose top term is $(-1)^{p-q-1} x^q$. Now $\Tr V_{p/q}$ takes real values on $\P_{p/q}$,  moreover by Corollary~\ref{cor:unbounded} it is unbounded on $\P_{p/q}$. It follows that 
as $|x| \to \infty$, $\P_{p/q}$ must be asymptotic to one of the rays $R(k\pi/q), k \in \ZZ$.

We have already identified $\P_{0/1}$ and $\P_{1/1}$ as the real intervals $(-\infty, -3]$ and $[2,\infty)$   respectively. It follows from Section~\ref{sec:dirichlet} that the  semicircular arc from $-4$ to $4$ (say) in $\HH$ is a continuous path in $\D$ from 
$\P_{0/1}$ to $\P_{1/1}$. Hence by the continuity theorem of~\cite{kscont},  if  $0 < p/q <1$ there is a point on  $\P_{p/q}$ in the upper half plane $\HH$. Likewise there is a point 
on  $\P_{p/q}$ in the lower half plane. (This was missed in~\cite{ksriley}.) Since $\P_{0/1} \cup \P_{1/1}$ separates $\D$ into two connected components, this shows in particular that  $\P_{p/q}$ must have at least two connected components. 

 Now we proceed by induction on the Farey tree. Suppose we have shown the result for two Farey neighbours $p/q, r/s$. Consider the locus $\P_{p+r/q+s}$. By the inductive hypothesis,  $\HH$ contains exactly one component  of  each of  $\P_{p/q}, \P_{r/s}$, asymptotic to the rays  $R(\pi (p-q)/q), R( \pi (r-s)/s)$ respectively. Exactly as in~\cite{ksriley}  it is easy to check that there is exactly one integer $k \in \{0, 1, \ldots, 2(q+s)-1 \}$ for which $R(k\pi/(q+s))$ lies between $R(\pi (p-q)/q)$ and $ R( \pi (r-s)/s)$, namely $k= (p+r)/(q+s)$. By the same continuity theorem as before, a path in this sector joining suitable points on $\P_{p/q}, \P_{r/s}$ must meet $\P_{p+r/q+s}$. Thus $\P_{p+r/q+s}$ has at least one connected component asymptotic to $ R( \pi (p + r -q-s)/(q+s))$.
 A similar argument in the lower half plane gives another connected component asymptotic to $ R( \pi (p + r +q+s)/(q+s))$. Since $\P_{p+r/q+s}$ has exactly two components by Proposition~\ref{prop:connectivity} below, the result follows.
\end{proof}

The issue of connectivity of $\P_{\g}$ is a bit subtle. In the general theory, see~\cite{BonO, chois}, one shows that $\P_{\g}$ has one connected component.   However this result holds in a space of manifolds which are consistently oriented throughout the space and
all of whose  convex cores have non-zero volume. In our case we have:

\begin{proposition} \label{prop:connectivity} If $\g  \neq 0/1, 1/1$ and $\P_{\g} \neq \emptyset$,  then  $\P_{\g}$ has exactly two connected components in $\D$.
\end{proposition}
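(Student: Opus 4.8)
The plan is to combine the general connectedness theorem for rational pleating varieties of~\cite{BonO, chois} with the complex conjugation symmetry of $\D$, exploiting the fact that this symmetry is \emph{orientation-reversing}: $\D$ splits into two consistently oriented pieces, to each of which the general theory applies separately, and this is what turns the "one component" of the general theory into "two components" here.

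First I would record the relevant symmetry. By Proposition~\ref{prop:tracepoly}(1) the trace $\Tr V_{p/q}$ is a polynomial in $x$ with real coefficients, so the real trace locus $\RR_{p/q}$ is invariant under $x\mapsto\bar x$; and the complex conjugate of a discrete faithful representation is again discrete and faithful (Section~\ref{sec:symmetries}), so $\D$ is invariant under $x\mapsto\bar x$. The bending lamination of the conjugate group $G_{\S}(\bar x)$ is the mirror image of that of $G_{\S}(x)$, hence is $\gamma_{-p/q}$, which is homotopic in $\S$ to $\gamma_{p/q}$ by Proposition~\ref{curveequality}; therefore $\P_{p/q}$ is itself invariant under $x\mapsto\bar x$.

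Next I would show $\P_{p/q}$ is disjoint from the real axis when $p/q$ is not an integer. The locus $\D\cap\RR$ is exactly the Fuchsian locus $\P_{0/1}\cup\P_{1/1}$ (Section~\ref{sec:fuchsian}), and at such a point the bending lamination is the boundary of the Nielsen region, namely $\gamma_{0/1}$ or $\gamma_{1/1}$; so a point of $\P_{p/q}\cap\RR$ would force $\gamma_{p/q}\sim\gamma_{0/1}$ or $\gamma_{1/1}$, which Proposition~\ref{curveequality} forbids for non-integral $p/q$. I would also rule out accumulation on $\RR$: if $x_n\in\P_{p/q}$ with $x_n\to x_\infty\in\RR$, then by continuity $\Tr V_{p/q}(x_\infty)\in(-\infty,-2]\cup[2,\infty)$, so $x_\infty\in\RR_{p/q}$, and since $\P_{p/q}$ is closed in $\RR_{p/q}$ (Theorem~\ref{thm:nonsing}) we would get $x_\infty\in\P_{p/q}\cap\RR$, a contradiction. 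Hence $\P_{p/q}\subset\D\setminus\RR$. Now write $\D^+=\D\cap\HH$ and let $\D^-$ be its image under complex conjugation. The separation statement already used in the proof of Proposition~\ref{prop:asympdirn} (that $\P_{0/1}\cup\P_{1/1}$ splits $\D$ into exactly two components) shows $\D\setminus\RR=\D^+\sqcup\D^-$, the two pieces being interchanged by $x\mapsto\bar x$. Restricted to $\D^+$ the groups $G_{\S}(x)$ form a consistently oriented family of geometrically finite Kleinian groups whose convex cores all have positive volume, so the theory of~\cite{BonO, chois} applies inside $\D^+$: the pleating variety of the projective class of $\gamma_{p/q}$ in $\D^+$, which is precisely $\P_{p/q}\cap\D^+$, is connected; likewise in $\D^-$. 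Since $\P_{p/q}\neq\emptyset$ and is invariant under the conjugation swapping $\D^+$ and $\D^-$, both $\P_{p/q}\cap\D^+$ and $\P_{p/q}\cap\D^-$ are non-empty; being open and closed in $\P_{p/q}$ (because $\D^+,\D^-$ are open and closed in $\D\setminus\RR$) and partitioning it, they are its two connected components.

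The step I expect to be the main obstacle is verifying that $\D^+$ — more precisely its geometrically finite interior — genuinely meets the hypotheses under which the connectedness results of~\cite{BonO, chois} are proved, namely that it is a well-behaved, consistently oriented deformation space with no collapsed convex cores, and that $\P_{p/q}\cap\D^+$ really is the pleating variety studied there. This is exactly the subtlety flagged in Remark~\ref{Komori issue}, and is where the apparent discrepancy with~\cite{ksriley} is resolved: the "one component" of the general theory becomes "two components" in $\D$ precisely because, unlike the spaces of~\cite{BonO, chois}, $\D$ carries an orientation-reversing symmetry and so breaks into two such pieces, the exceptional classes $0/1$ and $1/1$ being exactly those whose pleating rays lie along the fixed locus $\RR$ of that symmetry.
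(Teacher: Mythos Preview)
Your argument is correct and follows essentially the same route as the paper: split $\D\setminus\RR$ into the upper and lower half-planes, observe that each carries a consistent orientation so that the connectedness theorem of~\cite{BonO, chois} gives at most one component of $\P_{p/q}$ in each, and then show there is at least one in each. The only noteworthy difference is in this last step: the paper appeals back to the continuity-theorem argument inside the proof of Proposition~\ref{prop:asympdirn} to exhibit a point of $\P_{p/q}$ in each half-plane, whereas you deduce this directly from the hypothesis $\P_{p/q}\neq\emptyset$ together with invariance under $x\mapsto\bar x$ and disjointness from $\RR$. Your route has the minor advantage of sidestepping the apparent circular reference between Propositions~\ref{prop:asympdirn} and~\ref{prop:connectivity}.
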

\begin{proof}
The usual argument that the pleating ray of a rational lamination  has one connected component goes as follows.  Given a point on $\P_{\g}$, double the convex core along its boundary to obtain a cone manifold with a singular axis of angle $2(\pi - \theta)$ along $\g$, where $\theta$ is the bending angle along $\gamma$. (Notice that the convention on defining bending angles differs between papers by the first author and~\cite{BonO}. In our convention, a bending line contained in  flat subsurface has bending angle $0$ but cone angle $2 \pi$, whereas in~\cite{BonO},  the bending angle along a line in a flat  surface is defined to be $\pi$.) By \cite{HK},  such a  hyperbolic cone manifold is  parametrized by its cone angle. One shows that one can continuously deform the cone angle to $0$, at which point the curve whose axis is the bending line has to become parabolic.  The doubled manifold is an oriented hyperbolic manifold with a rank two cusp and finite volume. As long as we are working in a space in which all manifolds have consistent orientation, such a manifold  is  unique up to orientation preserving isometry, from which one deduces that $\P_{\gamma}$ is connected.

In our case, the parameter space $\D$ is separated by two lines along which $G$ is Fuchsian so that $\C(G)/G$ has zero volume and  the above argument fails.
Note however that, provided that $G$ is not Fuchsian,   $\S$ can be oriented by the triple consisting of the \emph{oriented} axes of $P,Q$ and the oriented line $C$ from $ \Ax  K_1 $ to $ \Ax  K_1 $. 
The map $ \z \to \bar \z$ reverses the relative orientations of $ \Ax P,  \Ax Q$ while fixing that of $C$.   Thus $\D \setminus \RR$ has two connected components in which $\S$ has naturally opposite orientations. The above argument shows that  $\P_{\gamma} $ has at 
most  one component  in each component of $\D$.  
Since we have already shown in Proposition~\ref{prop:asympdirn}
 that $\P_{\gamma} $ has at least one component in each of the upper and lower half planes, this completes the proof. 

This proposition can alternatively be proved by the more \emph{ad hoc} methods used in~\cite{ksriley}.
\end{proof}

\begin{remark}
\rm {Proposition~\ref{prop:asympdirn} shows that $\P_{p/q} \neq \emptyset$ for all $p/q \in \QQ$. This can be viewed as a special case of the general result of~\cite{BonO} Theorem 1,
see also~\cite{chois} Theorem 2.4.  We have to be careful to include the case, excluded in \cite{BonO}, that the group $G_{\S}(x)$ is Fuchsian so that $\C/G$ has zero volume.
The conclusion is the following:
\begin{proposition} Let $\gamma$  be an essential simple non-peripheral closed curve on $\dd \S$. Then 
$\P_{\gamma} \neq \emptyset$ if and only if $\gamma$ is non-trivial in $\pi_1(\S)$ and intersects the meridian disk $\gamma_{1/0}$ at least twice.
If $\gamma$ meets  $\gamma_{1/0}$ exactly twice then the bending angle is identically $\pi$ and $G_{\S}(x)$ is Fuchsian.
\end{proposition}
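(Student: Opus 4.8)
The plan is to derive the statement as a bookkeeping corollary of Steps~1, 3 and~5, with no further hyperbolic input. The first step is the purely topological observation that an essential, non-peripheral, simple closed curve $\gamma$ on $\dd\S$ must separate the four cone points of $\dd\S$ into two pairs: a curve realising a $1$--$3$ separation encircles a single cone point, hence is peripheral and excluded. So $\gamma$ carries a well-defined slope in $\QQ\cup\{\infty\}$ and, by the construction of Section~\ref{sec:enumeration}, is freely homotopic on $\dd\S$ to some $\gamma_{p/q}$, meeting the meridian $\gamma_{1/0}=\dd m$ minimally in exactly $2q$ points. For $q=0$ we have $\gamma=\gamma_{1/0}$, which bounds the disk $m$ and so is trivial in $\pi_1(\S)$; for $q\ge 1$, Proposition~\ref{prop:tracepoly}(1) gives $\Tr\rho_x(\gamma_{p/q})$ a polynomial of degree exactly $q$, hence non-constant, so $\gamma_{p/q}$ is non-trivial in $\pi_1(\S)$. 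Invoking Proposition~\ref{curveequality} to pass between homotopy on $\dd\S$ and in $\S$, this shows that among the curves under consideration the two hypotheses ``$\gamma$ non-trivial in $\pi_1(\S)$'' and ``$i(\gamma,\gamma_{1/0})\ge 2$'' are each equivalent to ``$\gamma$ is homotopic in $\S$ to $\gamma_{p/q}$ for some $q\ge 1$.''

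With this reduction the ``if'' direction is immediate: for every $q\ge 1$, $\P_{\gamma_{p/q}}$ is non-empty, being one of the Fuchsian rays of Section~\ref{sec:fuchsian} when $q=1$ and, for $q\ge 2$, the non-empty union of branches of $\RR_{p/q}$ produced by Proposition~\ref{prop:asympdirn}. For the ``only if'' direction, fix $x\in\P_\gamma$, so that $\gamma$ supports the bending lamination of $\dd\C/G_\S(x)$. As recalled in Section~\ref{sec:pleating}, a bending line is the projection of a geodesic axis and carries no peripheral curve in its support; hence the class $\gamma$ is non-peripheral and non-trivial in $\pi_1(\S)$, in particular distinct from $\gamma_{1/0}$. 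By the reduction this forces $\gamma\simeq\gamma_{p/q}$ with $q\ge 1$, i.e. $i(\gamma,\gamma_{1/0})=2q\ge 2$.

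For the last sentence, $i(\gamma,\gamma_{1/0})=2$ means $q=1$, hence by Proposition~\ref{curveequality} that $\gamma$ is homotopic in $\S$ to $\gamma_{0/1}$ or $\gamma_{1/1}$. By Section~\ref{sec:fuchsian}, $\P_{0/1}$ and $\P_{1/1}$ are precisely the real half-lines along which $G_\S(x)$ is Fuchsian, its limit set lying in $\hat\RR$; there the convex core is squashed into the equatorial plane, so $\dd\C/G_\S(x)$ is the double of the Nielsen region $\HH^2/G_\S(x)$ folded flat along its boundary and the bending angle along $\gamma$ equals $\pi$ everywhere, in the normalisation fixed in the discussion following Lemma~\ref{lemma:realtrace}. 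I would write out this last identification in a line or two so the ``angle $\pi$'' convention is unambiguous.

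All the substantive ingredients are already in place, so the only real care needed is in the first paragraph: keeping the three notions straight (slope, geometric intersection number with $\gamma_{1/0}$, and (non-)triviality in $\pi_1(\S)$), distinguishing homotopy on $\dd\S$ from homotopy in $\S$, and dealing explicitly with the Fuchsian exception --- which is exactly the degenerate case (zero-volume convex core) excluded from the general theorems of~\cite{BonO} and~\cite{chois} that this proposition specialises.
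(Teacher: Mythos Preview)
Your argument is correct and is essentially what the paper intends: the proposition is stated inside a remark without a written-out proof, the paper simply observing that Proposition~\ref{prop:asympdirn} already gives $\P_{p/q}\neq\emptyset$ for all $p/q\in\QQ$ (with~\cite{BonO} cited for context) and that the Fuchsian zero-volume case must be handled separately. Your write-up makes this explicit by combining the enumeration of Step~1, the Fuchsian analysis of Step~3, and the non-emptiness from Step~5, with the correct bending-angle convention taken from the discussion after Lemma~\ref{lemma:realtrace}.
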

}
\end{remark}

\subsection{ Step 6. Density of rational pleating rays}

Finally, we justify the claim that  the rational pleating rays are dense in $\D$:
\begin{theorem}[\cite{kstop} Corollary 6.2, \cite{ksriley} Theorem 5.2] \label{thm:density} Rational pleating rays  are dense in $\D_{\S}$.
\end{theorem}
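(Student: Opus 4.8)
The plan is to realise $\bigcup_{p/q}\P_{p/q}$ as the preimage, under the bending map, of a dense subset of the space of bending laminations, and then to extract a contradiction from the fact that the irrational classes in that space are totally disconnected. It is classical that the projective measured laminations on $\dd\S=\Sigma_{0;3,3,3,3}$ with no peripheral leaf form a circle in which the classes of the simple closed curves $\g_{p/q}$ are dense; after the identifications of Proposition~\ref{curveequality} (the relation $p/q\sim -p/q\sim p/q+2$, which is precisely the ambiguity in marking $\dd\S$ inside $\S$) one obtains a space $\mathcal{L}$ in which the rational classes are still dense and the complementary set $\mathcal{L}^{\mathrm{irr}}$ of irrational classes is still totally disconnected. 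For $x\in\D$ let $\mathrm{bl}(x)\in\mathcal{L}$ be the class of the bending lamination $\b(G_{\S}(x))$ of $\dd\C/G_{\S}(x)$; this is a nonzero lamination for every $x\in\D$ (on the Fuchsian locus it is supported on $\g_{0/1}$ or $\g_{1/1}$ with bending angle $\pi$). By the definition of the pleating ray, $x\in\P_{p/q}\iff\mathrm{bl}(x)=[\g_{p/q}]$, so $\bigcup_{p/q}\P_{p/q}=\mathrm{bl}^{-1}(\mathcal{L}\setminus\mathcal{L}^{\mathrm{irr}})$, and by Proposition~\ref{prop:asympdirn} none of these rays is empty.

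First I would dispose of the real points: by Sections~\ref{sec:dirichlet} and~\ref{sec:fuchsian}, $\D\cap\RR=\P_{0/1}\cup\P_{1/1}$, so every real $x_0\in\D$ already lies on a rational pleating ray. Next I would fix $x_0\in\D\setminus\RR$ and a small connected open neighbourhood $U\subset\D\setminus\RR$ of $x_0$, and argue by contradiction: if $U$ met no rational pleating ray, then $\mathrm{bl}(U)\subset\mathcal{L}^{\mathrm{irr}}$. The essential input is the continuity of $\mathrm{bl}$ on $\D\setminus\RR$ --- the continuity of the bending lamination under algebraic convergence inside the geometrically finite locus, the same principle used to prove closedness of $\P_\g$ in Theorem~\ref{thm:nonsing}. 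Since $U$ is connected and $\mathcal{L}^{\mathrm{irr}}$ is totally disconnected, $\mathrm{bl}|_U$ would be constant, equal to some irrational class $[\mu]$, so $U$ would be contained in the pleating variety $\P_{[\mu]}=\{x\in\D:\mathrm{bl}(x)=[\mu]\}$. I would then invoke the irrational analogue of Theorem~\ref{thm:nonsing}: by the orientation discussion in the proof of Proposition~\ref{prop:connectivity}, $U$ lies in one of the two components of $\D\setminus\RR$ on which $\S$ carries a consistent orientation and the convex cores have nonzero volume, and there the Bonahon--Otal results of~\cite{BonO, chois} show that $\P_{[\mu]}$ is a nonsingular $1$-submanifold of $\D$ (with a suitable length function of $\mu$ as a local parameter). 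A $1$-manifold has empty interior in $\CC$, contradicting the openness of $U$. Hence every point of $\D$ has points of $\bigcup_{p/q}\P_{p/q}$ arbitrarily near, which is the assertion.

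I expect the only real difficulty to lie in the two imported facts, not in the topological skeleton: (i) the continuity of $x\mapsto\mathrm{bl}(x)$ on $\D\setminus\RR$, and (ii) that the pleating variety of an irrational lamination is a $1$-manifold rather than something of larger dimension. Both belong to the ``deepest mathematics'' of Section~\ref{sec:nonsingular}; I would not reprove them but import them from~\cite{kstop, ksriley} (which treat the rational case in detail, already giving continuity near the rational rays) together with~\cite{BonO, chois} for the general statement. Granting these, the density of rational pleating rays in $\D_{\S}$ follows as above, recovering~\cite{kstop} Corollary~6.2 and~\cite{ksriley} Theorem~5.2.
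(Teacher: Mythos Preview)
Your argument is correct but takes a genuinely different route from the paper's. The paper proceeds by complex analysis: given $x$ on an irrational pleating variety $\P_\nu$, it approximates $\nu$ by rationals $\nu_n=c_n\delta_{\gamma_n}$, shows that the scaled complex length functions $c_n\lambda_{\gamma_n}$ form a normal family near $x$ with a non-constant holomorphic limit whose real locus contains $\P_\nu$, and then applies Hurwitz's theorem to produce nearby points with $c_n\lambda_{\gamma_n}\in\RR$, which for large $n$ are forced to lie on $\P_{\gamma_n}$. Your argument is instead purely topological: continuity of $\mathrm{bl}$ together with the total disconnectedness of the irrational classes in $\mathrm{PML}(\dd\S)\cong S^1$ pins the bending map to a constant irrational class on any open $U$ missing all rational rays, and then the one-dimensionality of the corresponding pleating variety (from \cite{BonO,chois}) gives the contradiction. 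Both routes rest on the same imported facts (continuity from \cite{kscont}, structure of pleating varieties from \cite{kstop,chois,BonO}); yours is more conceptual, while the paper's Hurwitz argument is more self-contained in that it only needs the rational pleating-ray machinery of \cite{kstop} plus normal-family convergence of length functions, not the full irrational case of \cite{BonO}.

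One small point: your neighbourhood $U\subset\D\setminus\RR$ only exists when $x_0\in\Int\D$, so as written you prove density in $\Int\D$. The extension to all of $\D$ needs the extra sentence $\D=\overline{\Int\D}$ (Bers density), which the paper states explicitly at the end of its proof.
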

\begin{proof} The proof of this result in any one complex dimensional parameter space   is the same. Here is a quick sketch.
Suppose that $\nu$ is an irrational lamination  with corresponding pleating variety $\P_{\nu}$, and that $x \in \P_{\nu} \cap \D$. Pick a sequence of rational measured laminations $\nu_n = c_n \delta_{\gamma_n}$ where $c_n \in \RR^+$ so that $\nu_n  \to \nu $ in the space of projective measured laminations on $\dd \S$, where $\delta_{\gamma_n}$ is the unit point mass on $\gamma_n$. Replace the traces of $\gamma_n$ by complex length functions $\lambda_n$ and  scale to get complex analytic functions $c_n \lambda_n$. One shows that
in a neighbourhood of $x \in \P_{\nu}$ these functions form a normal family  which converges to a non-constant analytic function (\cite {kstop} Theorem 20),   whose real locus 
contains the pleating ray $\P_{\nu}$ (\cite {kstop} Theorem 23). 
By   Hurwitz' theorem,  there are nearby points at which the approximating functions $c_n \lambda_n$ must take on real values. In a small enough neighbourhood of $x$, this is enough to force $ y \in \P_{\gamma_n}$ (\cite {kstop} Theorem 31). This gives density in $\Int \D$. By the result quoted in the introduction that $\D = \overline{\Int \D}$ we are done. \end{proof}

  \subsection{The pleating rays for $\H$} 
\label{sec:rays}

By Corollary~\ref{discretetogether}, $\D_{\H}= \D_{\S}$. Thus  the rational rays for $\D_{\S}$ are also dense in $\D_{\H}$. However it is easy to see that 
 a rational pleating laminations on  $\dd \H(x)$ correspond exactly to those on $\S(x)$,  and that although the actual bending curves differ, their traces are related by a simple formula.

\begin{lemma} \label{bendlinesagree} Suppose that the bending lamination  $\beta_{\H}(x)$ of $\H(x)$ is rational so that its support $\lambda$ is a union of disjoint simple closed curves on $\dd \H$.  
Let $\gamma$ be a connected component of 
$\lambda$. Then either $\O(\gamma) = \gamma$ or the three curves $\gamma, \O(\gamma) , \O^2(\gamma)$
are disjoint. The support of the bending lamination   $\beta_{\S}(x)$ is exactly the projection of $\gamma$ to $\S$ and all rational bending laminations of $\S $ arise in this way.
\end{lemma}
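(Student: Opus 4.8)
The plan is to reduce everything to the fact that the bending lamination is an intrinsic invariant of the hyperbolic structure, combined with the compatibility of convex cores under the finite cover $\H(x)\to\S(x)$, after which the dichotomy is an elementary orbit count for the $\ZZ/3\ZZ$-action. First I would recall that $\O$ is realised by conjugation by $K$, which normalises $G_\H(x)$ (with $K^{-1}XK=Y$, etc.), so that $G_\H(x)$ is normal of index $3$ in $G_\S(x)=\langle G_\H(x),K\rangle$, with quotient $G_\S(x)/G_\H(x)=\langle\O\rangle\cong\ZZ/3\ZZ$, and $\O$ is the corresponding isometric deck action on $\H(x)=\HHH^3/G_\H(x)$ with $\H(x)/\O=\S(x)$. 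Since the index is finite, $G_\H(x)$ and $G_\S(x)$ have the same limit set and hence the same convex hull $\C\subset\HHH^3$, so $\dd\C/G_\H(x)\to\dd\C/G_\S(x)$ is a regular covering of pleated surfaces with deck group $\langle\O\rangle$. Because the bending locus and its transverse measure are intrinsic to a pleated surface, $\beta_\H(x)$ is the full preimage of $\beta_\S(x)$ under this covering, and $\beta_\S(x)$ is its image; in particular its support $\lambda$ is $\O$-invariant and $\O$ permutes the components of $\lambda$.

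Next, the dichotomy. Since $\O$ has order $3$ and $\ZZ/3\ZZ$ has no proper nontrivial subgroup, the orbit $\{\gamma,\O(\gamma),\O^2(\gamma)\}$ of a component $\gamma$ of $\lambda$ has size $1$ or $3$. If the size is $3$, the three curves are distinct components of the lamination $\lambda$, hence pairwise disjoint; if the size is $1$, then $\O(\gamma)=\gamma$. In the latter case $\O|_\gamma$ is a nontrivial homeomorphism of $\gamma\cong S^1$ of order dividing $3$ ($\O$ is a nontrivial conformal automorphism of $\dd\H$, so fixes no simple closed curve pointwise); being of odd order it preserves the orientation of the two-sided curve $\gamma$, hence is conjugate to a rotation of order $3$, so $\gamma\to\bar\gamma:=\gamma/\langle\O\rangle$ is a connected cyclic triple cover. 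In the former case $\gamma,\O(\gamma),\O^2(\gamma)$ map homeomorphically onto their common image $\bar\gamma$. Either way $\bar\gamma$ is a simple closed curve on $\dd\S$ and the full preimage of $\bar\gamma$ in $\dd\H$ is exactly $\gamma\cup\O(\gamma)\cup\O^2(\gamma)$.

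It remains to identify the support of $\beta_\S(x)$ with $\bar\gamma$ and to prove the converse assertion. By the first paragraph, the support of $\beta_\S(x)$ is the image of $\lambda$, a union of disjoint simple closed curves on $\dd\S$; but, as recalled in Section~\ref{sec:pleating}, any two homotopically distinct non-peripheral simple closed curves on $\dd\S$ intersect, and bending lines are never peripheral, so this support is a \emph{single} simple closed curve, necessarily $\bar\gamma$. Consequently $\lambda$ is contained in the preimage of $\bar\gamma$, so $\lambda=\gamma\cup\O(\gamma)\cup\O^2(\gamma)$ (when $\O(\gamma)\neq\gamma$) or $\lambda=\gamma$ (when $\O(\gamma)=\gamma$), and the support of $\beta_\S(x)$ is the projection of the single component $\gamma$, as claimed. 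For the converse, whenever $\beta_\S(x)$ is rational with support a simple closed curve $\delta$ on $\dd\S$, its preimage in $\dd\H$ is unbranched (a bending line avoids the cone points of $\dd\S$) and $\O$-invariant, so by the first paragraph it is the support of $\beta_\H(x)$; hence $\beta_\H(x)$ is rational and of the stated form, i.e. every rational bending lamination of $\S(x)$ arises this way.

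The essential point — and the only step that is not a formality — is the first paragraph: making precise that equality of the limit sets of the commensurable groups $G_\H(x)\subset G_\S(x)$ really does upgrade to an honest regular covering of the \emph{pleated} boundary surfaces which carries the bending data (lamination and transverse measure) of one to that of the other. Once this is in hand, the orbit count and the curve-intersection argument of the remaining steps are elementary.
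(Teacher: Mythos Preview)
Your proof is correct and follows essentially the same approach as the paper: both use $\O$-invariance of the convex core (equivalently, equality of limit sets for the finite-index pair $G_\H(x)\subset G_\S(x)$) to make $\lambda$ invariant, derive the dichotomy from the $\ZZ/3\ZZ$-orbit structure, and transfer bending data via the local/covering characterisation of bending lines. The only noteworthy differences are cosmetic: the paper shows that $\gamma$ avoids the fixed points of $\O$ by observing that at such a point the three images would cross at angle $2\pi/3$, whereas you extract this implicitly (from disjointness in the size-$3$ case and from the free rotation in the size-$1$ case); and you add the useful observation---only implicit in the paper---that the intersection property of curves on $\dd\S$ forces the image lamination to be a single curve, hence $\lambda$ is a single $\O$-orbit.
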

\begin{proof} The limit set of $G_{\H}(x)$ and hence its convex core are invariant under the symmetry  $\O$.
Hence  the support $\lambda$ of $\beta_{\H}(x)$   is also $\O$-invariant.
Let $\gamma$ be a connected component of 
$\lambda$. Since connected components of $\lambda$ are pairwise disjoint, either $\O(\gamma) = \gamma$ or the three curves $\gamma, \O(\gamma), \O^2(\gamma)$
are disjoint.
In either case,  $\g$ cannot pass through a fixed point of $\O$: at the fixed point $P$ the images of $\g$ would meet at angles $2 \pi/3$ so that $\gamma, \O(\gamma) , \O^2(\gamma)$ would intersect at $P$, which is impossible.

Let $\pi_{\O}$ be the projection $\H \to \S$. In a neighbourhood of a bending line $\pi_{\O}$ is a covering map hence a local isometry. Since being a bending line can be characterised locally, $\b_{\S}(x)$ is the projection of $\g$ to $\S$.

Let $\gamma$ be a simple closed curve on $\dd \S$.
Clearly, by the same observation about local characterisation of bending lines,  if $\gamma$ is a bending line then so is any connected component of its lift  to $\dd \H$. This proves the converse.  
 \end{proof}
 
 We remark that if $p/q$ is congruent to $1/0$ or $0/1$ mod $\ZZ_2$ then the lift of $\gamma_{p/q}$ has three connected components which are permuted among themselves by $\O$, while if $p/q$ is congruent to $1/1$ then its lift  has one $\O$-invariant connected component. 
 To see this, check by hand for the curves $\gamma_{1/0}, \gamma_{0/1}, \gamma_{1/1}$ and then note that the lifting property is invariant under the mapping class group of $\dd \S$ which at the same time acts transitively on $p/q$   congruence classes mod $\ZZ_2$.

To actually compute the pleating rays for $\D_{\S}$, we  computed the traces $\Tr V_{p/q}(x)$ corresponding to the  curves $\g_{p/q} \in \pi_1(\S)$. The above discussion shows that it is unnecessary to actually compute traces of lifted curves   in $\pi_1(\H)$. If for some reason one wanted to do this, either one could start again enumerating the curves on $\H$, or one could note that the complex length of a lift of $\g_{p/q}$ in $\H$ would be either the same as or three times that of the curve $\g_{p/q}$ in $  \S$, depending on the $\ZZ_2$-parity of $p/q$.

 \section{Computing traces} \label{sec:torustree}

  To compute traces of the elements $V_{p/q}$, rather than use the $\S$-tree as in Section~\ref{sec:traces},  we actually  performed computations on the  associated Markoff tree corresponding to the associated torus $\T$ of Section~\ref{sec:solidtorus}, referred to in this section as the $\T$-tree. To justify this, we need to compare the curves in Farey position $p/q$ on the two trees to ensure that they do indeed correspond geometrically as expected. We also need to address the issue about lifting representations  to $\SL$ raised in Remark~\ref{signchoices1}.
 
  \subsubsection{Correspondence of curves}\label{sec:correspondence}

Homotopy classes of essential simple non-peripheral  loops on $\dd \T$ are  well known to be in bijective correspondence to unoriented lines of rational slope  in the plane, see for example~\cite{serint,kstop}.
 In fact the word $W_{p/q}$ generated by the concatenation process following the $\T$-tree described in Section~\ref{sec:markoff} is the cutting sequence of a line of slope $p/q \in \hat \QQ$ across the lattice, see~\cite{serint}.  
 \begin{figure}[hbt] \begin{center}
 \includegraphics[height=6cm]{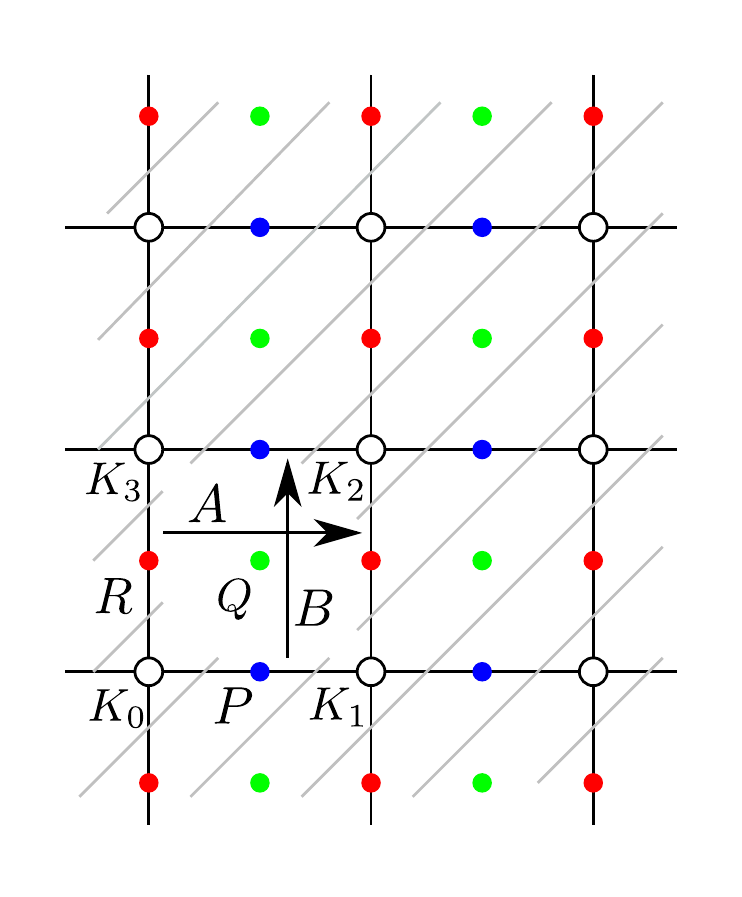}  
 \caption{Lattice representation of a cover of $\dd \S$. The integer  vertices (white circles) correspond to the end points of the order $3$ axes on $\dd \S$; the endpoints of the order $2$ elliptics $P,Q, R$ are coloured blue, green and red respectively.} \label{fig:lattice}
\end{center}\end{figure}

The key point here is that the plane with a cone singularity of angle $2\pi/3$ at integer lattice points, see Figure~\ref{fig:lattice}, is an intermediate covering between the universal cover $\HH$ of $\dd \T$ and $\dd \T$ itself.   As described in for example~\cite{ksriley}, the same lattice can also be viewed as an intermediate covering between 
$\HH$ and $\dd \S$: the rectangle with vertices at $0, 1, 2i, 2i+1$  projects bijectively to $\dd \S$ while the rectangle with vertices   $0, 1,   i/2, 1+  i/2$ projects bijectively to $\dd \U$ and the unit square projects  bijectively to   the torus $\dd \T$. The lattice points correspond to the cone points belonging to $K_i, i=1, \ldots, 4$ arranged as shown. Thus there  is also a bijective correspondence between lines of rational slope in the punctured plane and simple essential non-peripheral curves on  $\dd \S$. In this way, one can easily relate the words $W_{p/q}$ (on $\dd \T$) and $ V_{p/q}$ (on $\dd \S$); this is explained in detail in~\cite{ksriley}. 

In this picture,  the meridian loop $\dd m$ of Section~\ref{sec:enumeration} is identified as the `vertical' line of slope $1/0$.  
One sees easily that the line of slope $p/q$ in the plane projects to a curve on $\dd \S$ which has exactly $2q$ intersections with $\dd m$ and a twist of $p$ as described in Section~\ref{sec:enumeration}.   It follows from Lemma~\ref{lem:treeisok}  that  the labelling of curves by lines of rational slope $p/q$  exactly corresponds  to  the Farey labelling of curves  by their position on the $\S$-tree.

As above, the curve in Farey position $p/q$ on the $\S$-tree is denoted $\g_{p/q}$, corresponding to a word $V_{p/q}$; while the curve in Farey position $p/q$ on the $\T$-tree is denoted $\w_{p/q}$, corresponding to a word $W_{p/q}$. Now $  \S$ projects to $\U$ by a four-fold cover and  $\T$ projects to $\U$ by a two-fold cover. Hence we have:

\begin{proposition}\label{coverings}
The complex length of $\g_{p/q} $ is twice that of  $\w_{p/q}$, hence $   \Tr V_{p/q}(\z) = \pm (\Tr W_{p/q} (\z))^2 -2 )$. \end{proposition}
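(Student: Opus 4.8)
The plan is to deduce the trace identity from the geometric statement that the complex translation length of $\g_{p/q}$ in $G_{\S}(\z)$ is twice that of $\w_{p/q}$ in $G_{\T}(\z)$, and to establish the latter by a covering-degree count. Write $\ell(g)$ for the complex translation length of a loxodromic $g\in\PSL$, normalised so that $\Tr\tilde g=\pm 2\cosh(\ell(g)/2)$ for an $\SL$-lift $\tilde g$. Assuming $\ell(\g_{p/q})=2\,\ell(\w_{p/q})$, then since $2\cosh(2t)=\bigl(2\cosh t\bigr)^2-2$ and $\Tr W_{p/q}=\pm 2\cosh(\ell(\w_{p/q})/2)$,
\[
\Tr V_{p/q}=\pm\,2\cosh\bigl(\ell(\g_{p/q})/2\bigr)=\pm\,2\cosh\bigl(\ell(\w_{p/q})\bigr)=\pm\bigl((\Tr W_{p/q})^2-2\bigr),
\]
which is the assertion. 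The sign cannot be fixed canonically: by Remark~\ref{signchoices1} the $\PSL$ representation of $\pi_1(\T)$ has no $\SL$-lift, so $W_{p/q}$ is only defined up to sign; one checks on examples (see below) that in fact the minus sign always occurs.

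For the length statement I would argue via the common covering of Section~\ref{sec:correspondence}. There the plane with a cone point at each integer lattice point is realised simultaneously as a branched cover of $\dd\S$, of $\dd\T$ and of $\dd\U$, so that the line of slope $p/q$ projects to $\g_{p/q}$ on $\dd\S$ (this is Lemma~\ref{lem:treeisok}), to $\w_{p/q}$ on $\dd\T$, and to a closed curve $\bar\g_{p/q}$ on $\dd\U$, while the covers $\dd\S\to\dd\U$ and $\dd\T\to\dd\U$ have degrees $4$ and $2$ respectively. Since $\gcd(p,q)=1$ the slope-$p/q$ line is primitive, and the explicit picture of Section~\ref{sec:correspondence} shows that its preimage in $\dd\S$ is the single curve $\g_{p/q}$ and its preimage in $\dd\T$ the single curve $\w_{p/q}$; hence $\g_{p/q}$ covers $\bar\g_{p/q}$ with degree $4$ and $\w_{p/q}$ covers it with degree $2$. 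Equivalently, for a loxodromic $\bar g\in G_{\U}(\z)$ representing $\bar\g_{p/q}$, the element $V_{p/q}$ is conjugate in $\PSL$ to $\bar g^{\,4}$ and $W_{p/q}$ to $\bar g^{\,2}$; therefore $V_{p/q}$ is conjugate in $\PSL$ to $W_{p/q}^{\,2}$, so $\ell(\g_{p/q})=2\,\ell(\w_{p/q})$ and $\Tr V_{p/q}=\pm\Tr(W_{p/q}^{\,2})=\pm\bigl((\Tr W_{p/q})^2-2\bigr)$. As a sanity check, and to settle the degenerate cases $\g_{1/0},\g_{0/1},\g_{1/1}$ (for which some of the curves are elliptic or trivial and the abstract $\cosh$ identity must be replaced by a direct computation), one reads off from Section~\ref{sec:solidtorus} that $B^2=-\mathrm{id}$, $A^2=-K_0K_1$ and $(AB)^2=(RP)^2$; combined with \eqref{eqn:torustraces} and \eqref{eqn:paramreln} this gives $\Tr V_{1/0}=2=-\bigl((\Tr B)^2-2\bigr)$, $\Tr V_{0/1}=x=-\bigl((\Tr A)^2-2\bigr)$ and $\Tr V_{1/1}=1-x=-\bigl((\Tr AB)^2-2\bigr)$.

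The step I expect to be the main obstacle is precisely the claim that the preimage of $\bar\g_{p/q}$ in $\dd\S$ (respectively in $\dd\T$) is \emph{connected}, so that the covering degrees are genuinely $4$ and $2$ and not proper divisors of them; without this the ratio $\ell(\g_{p/q})/\ell(\w_{p/q})$ need not equal $2$. Establishing it rests on the coprimality of $(p,q)$ together with the explicit identifications of Section~\ref{sec:correspondence}, Lemma~\ref{lem:treeisok} and Proposition~\ref{curveequality}, which pin down exactly which curve $\g_{p/q}$ (respectively $\w_{p/q}$) is. A more self-contained route, avoiding this global statement, is to prove $V_{p/q}\sim W_{p/q}^{\,2}$ in $\PSL$ by induction along the Farey tree: the base case is the computation just given, and for the inductive step one notes that the surgery producing $\g_{(p+r)/(q+s)}$ from the Farey neighbours $\g_{p/q},\g_{r/s}$ on $\dd\S$ is the image, under the covering of Section~\ref{sec:correspondence}, of the surgery producing $\w_{(p+r)/(q+s)}$ from $\w_{p/q},\w_{r/s}$ on $\dd\T$, and that this surgery is compatible with passing to squares. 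Either way one must keep the orientations of the two covers aligned, so that the sign indeterminacy noted above remains the only ambiguity.
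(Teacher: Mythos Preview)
Your approach is precisely the paper's: the single sentence preceding the proposition (``$\S$ projects to $\U$ by a four-fold cover and $\T$ projects to $\U$ by a two-fold cover'') is the whole argument the paper offers at this point, and you have fleshed it out carefully, including the base-case checks and the $\cosh$ identity. You are also right that the connectedness of the preimages is where the argument is incomplete --- the paper simply does not address it here.

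One caution about your formulation: writing $V_{p/q}\sim\bar g^{\,4}$ and $W_{p/q}\sim\bar g^{\,2}$ for a loxodromic $\bar g\in G_{\U}$ presupposes that the image $\bar\gamma_{p/q}$ on $\partial\U$ is a genuine closed curve represented by a primitive element of $G_{\U}$. In fact the $(\ZZ_2\times\ZZ_2)$-action on $\gamma_{p/q}$ and the hyperelliptic involution on $\omega_{p/q}$ typically have fixed points (the curves pass through the order-$2$ points of $\partial\U$), so the image is an arc between cone points rather than a closed loop, and ``$\bar g$'' is not available as a group element. What survives is exactly the conclusion you want, $V_{p/q}\sim W_{p/q}^{\,2}$ in $\PSL$, but the route through $\bar g^{\,4}$ and $\bar g^{\,2}$ needs reinterpretation.

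The paper resolves all of this in the very next statement, Proposition~\ref{relatingtraces}, by a purely algebraic induction on the Farey tree: one checks the identity $-f_{p/q}=(g_{p/q})^2-2$ on the initial triple, and then verifies directly that the $\S$-tree rule $u\mapsto 2-vw-u$ is compatible with the $\T$-tree rule $u'\mapsto v'w'-u'$ under the substitution $u=2-u'^2$ (using the vertex relation $u'^2+v'^2+w'^2=u'v'w'+3$). This is considerably cleaner than arguing via surgery compatibility, and it fixes the sign to be minus throughout, confirming your guess from the base cases.
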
 
Note that this allows for an ambiguity in the signs of the traces since the two lifts of $\pi_1(\T)$ and $\pi_1(\S)$ to $\SL$ are not (indeed cannot be) chosen consistently. 
\begin{corollary} Up to sign, the trace of  $\g_{p/q} \in \pi_1(\U)$   may be computed using the formula of Proposition~\ref{coverings} and  the $\T$-tree.  \end{corollary}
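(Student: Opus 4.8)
The plan is to read $\Tr V_{p/q}(\z)$ directly off the two things already in place: the complex‑length identity of Proposition~\ref{coverings}, and the combinatorial recipe for $\Tr W_{p/q}(\z)$ furnished by the $\T$-tree. First I would recall, from Section~\ref{sec:correspondence}, that the word $W_{p/q}$ occupying Farey position $p/q$ on the $\T$-tree is exactly the cutting sequence of the line of slope $p/q$ in the lattice, so it represents the curve $\w_{p/q}$ on $\dd\T$; and that the $\T$-tree is precisely the Markoff tree of Section~\ref{sec:markoff} attached to the pair $A,B$ of Section~\ref{sec:solidtorus}, that is, the tree $\mathbb T_{(\Tr A,\,\Tr B,\,\Tr AB)}$ with initial triple read off from \eqref{eqn:torustraces} and invariant $\mu=\Tr[A,B]+2=3$. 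The three propagation moves $(u,v,w)\mapsto(u,v,uv-w)$, $(u,v,w)\mapsto(u,uw-v,w)$, $(u,v,w)\mapsto(vw-u,v,w)$ are instances of the universal $\SL$ trace identities \eqref{eqn:inverse}--\eqref{eqn:commreln}, hence valid for the matrices $A,B$ regardless of the fact that $\langle A,B\rangle$ is not a faithful lift of $\pi_1(\T)$ (indeed $B^2=-\mathrm{id}$, see Remark~\ref{signchoices1}); so the labels on this tree really are the numbers $\Tr W_{p/q}(\z)$, computed without reference to $\S$ or $\U$.

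Next I would invoke Proposition~\ref{coverings}: since the complex length of $\g_{p/q}$ is twice that of $\w_{p/q}$, the trace‑squaring formula gives
\begin{equation*}
\Tr V_{p/q}(\z)\;=\;\pm\bigl((\Tr W_{p/q}(\z))^{2}-2\bigr),
\end{equation*}
where $V_{p/q}=\rho_{\S}(x)(\g_{p/q})$ is the trace of $\g_{p/q}$ viewed in $\pi_1(\U)$ through the covering $\S\to\U$. Substituting the $\T$-tree output into the right‑hand side determines $\Tr V_{p/q}(\z)$ up to sign, which is the assertion. The sign genuinely cannot be fixed by this route: as in Remark~\ref{signchoices1}, $\pi_1(\T)$ has torsion (the order‑two element $B$), so it has no $\SL$-lift, whereas $\pi_1(\S)$ does lift to $\SL$; the two matrix data therefore cannot be chosen compatibly along the covers, and one sign is lost. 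This is harmless for the application, since the pleating ray $\P_{p/q}$ depends only on the real trace locus $\RR_{p/q}$, which is unaffected by negating $\Tr V_{p/q}$.

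I do not expect a real obstacle here: the geometric content is carried by Proposition~\ref{coverings} (the factor‑two relation of complex lengths coming from the tower $\S\to\U\leftarrow\T$) and by the lattice picture of Section~\ref{sec:correspondence} (which identifies the Farey labellings of the two trees). The only points that need care are bookkeeping ones — checking that the Markoff moves stay legitimate for the non‑faithful matrix pair $A,B$, which they do because they are universal $\SL$ trace identities, and tracking the sign just far enough to see that it does not matter.
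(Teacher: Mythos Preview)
Your argument is correct and matches the paper's reasoning; in fact the paper gives no separate proof of this corollary, treating it as immediate from Proposition~\ref{coverings} together with the remark about sign ambiguity, which is precisely what you have spelled out. Your additional observation that the Markoff moves are universal $\SL$ trace identities (hence apply to the matrices $A,B$ despite the torsion obstruction to lifting $\pi_1(\T)$) is a useful point the paper leaves implicit.
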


Since we are aiming to compute pleating rays which are a geometrical construct and hence only depend on a $\PSL$ representation, this would be sufficient for our purposes. However it is more  satisfying    to prove the following more precise result which  shows that working with the $\SL$ lift of the representation of $\pi_1(\T)$ described in Section~\ref{sec:solidtorus}, we can fix the choice of sign.

 \begin{prop} \label{relatingtraces} With $W_{p/q}, V_{p/q}$  as above, let $f_{p/q} (z) = \Tr V_{p/q}(\z)$  and  $g_{p/q}  (\z)  = \Tr W_{p/q} (\z) $.  Then $-f_{p/q} (\z) =(g_{p/q}  (\z))^2 -2$ for all $p/q \in \hat \QQ$.   \end{prop}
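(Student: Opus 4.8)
The plan is to prove the slightly stronger, sign-tracked identity $f_{p/q}(\z) = 2 - \bigl(g_{p/q}(\z)\bigr)^2$ for every $p/q \in \hat\QQ$ by induction over the Farey tree, playing off the two different recursion rules carried by the $\S$-tree and the $\T$-tree. By Section~\ref{sec:correspondence} (together with Lemma~\ref{lem:treeisok}) the curves $\g_{p/q}$ and $\w_{p/q}$ occupy the same Farey position on their respective trees, so it is enough to check that the proposed relation is preserved under moving across an edge. This is just a precise, sign-determined version of the identity in Proposition~\ref{coverings}.

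First I would verify the base case at the three central regions $0/1, 1/0, 1/1$. There $\w_{0/1}, \w_{1/0}, \w_{1/1}$ are $A, B, AB$ and $\g_{0/1}, \g_{1/0}, \g_{1/1}$ are $K_0K_1, \rm{id}, K_0K_1^{-1}$. From Section~\ref{sec:solidtorus} we already have $A^2 = -K_0K_1$ and $B^2 = -\rm{id}$, and a one-line matrix computation (using $AB = RP$, $PQ = -QP$, $K_1 = PK_0P^{-1}$) gives $(AB)^2 = -K_0K_1^{-1}$; equivalently this last identity can be read off by comparing~\eqref{eqn:torustraces} with~\eqref{eqn:paramreln}. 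Since $\Tr M^2 = (\Tr M)^2 - 2$ in $\SL$, each relation of the form $V_{p/q} = -W_{p/q}^2$ immediately yields $f_{p/q} = 2 - g_{p/q}^2$.

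For the inductive step, fix an edge $e$ of the tree with adjacent regions at Farey positions $p/q$ and $r/s$, and assume the identity at $p/q$, $r/s$, and at the third region $u/v$ meeting them at one end of $e$. Write $g_0, g_1, g_2$ for the $\T$-labels of $u/v, p/q, r/s$ and $g_3$ for the $\T$-label of the region $(p+r)/(q+s)$ at the other end of $e$, and similarly $f_0, \ldots, f_3$. The $\T$-tree rule of Section~\ref{sec:markoff} gives $g_3 = g_1 g_2 - g_0$, the $\S$-tree rule of Section~\ref{sec:traces} gives $f_3 = 2 - f_1 f_2 - f_0$, and the triple $(g_0, g_1, g_2)$ round a vertex of the $\T$-tree satisfies the Markoff relation $g_0^2 + g_1^2 + g_2^2 - g_0 g_1 g_2 = \mu$ from~\eqref{eqn:commreln}, where here $\mu = \Tr[A,B] + 2 = 3$ (Section~\ref{sec:solidtorus}). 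Substituting $f_i = 2 - g_i^2$ for $i = 0,1,2$ and expanding, the quantity $f_3 - (2 - g_3^2)$ reduces to $2\bigl(g_0^2 + g_1^2 + g_2^2 - g_0 g_1 g_2 - 3\bigr)$, which vanishes precisely because $\mu = 3$. Hence $f_3 = 2 - g_3^2$, and since every region of the tree is joined to one of the three central regions by finitely many such moves, the identity propagates to all $p/q \in \hat\QQ$.

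The genuinely load-bearing point — and the only place where something could go wrong — is pinning down the sign in the base case: the overall minus sign in the statement is exactly the assertion $V_{p/q} = -W_{p/q}^2$ (rather than $+W_{p/q}^2$) for the three initial curves, and this rests on the specific $\SL$ lifts fixed in Section~\ref{sec:solidtorus} (recall from Remark~\ref{signchoices1} that $\pi_1(\T)$ has no lift to $\SL$ at all, so these signs are not canonical and must be checked by hand). Everything after that is forced: once the base case is established, the induction closes automatically because the $\T$-tree invariant is exactly $\mu = \Tr[A,B] + 2 = 3$, which is precisely the value that annihilates the error term in the computation above.
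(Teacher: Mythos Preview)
Your proof is correct and follows essentially the same route as the paper's: verify the relation at the three central regions using $A^2 = -K_0K_1$, $B^2 = -\rm{id}$, $(AB)^2 = -K_0K_1^{-1}$, then propagate along the Farey tree by checking that the $\S$-recursion $f_3 = 2 - f_1 f_2 - f_0$ and the $\T$-recursion $g_3 = g_1 g_2 - g_0$ are compatible under $f_i = 2 - g_i^2$ precisely because the Markoff invariant on the $\T$-tree is $\mu = 3$. Your explicit reduction of the error term to $2(g_0^2 + g_1^2 + g_2^2 - g_0 g_1 g_2 - 3)$ is exactly the computation the paper does (and in fact is cleaner: the paper's displayed identity has a stray minus sign on the right-hand side which your version avoids).
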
 
 \begin{proof} It is easy to check that this is correct for $p/q = 0/1,1/0, 1/1$.
 In detail: 
 
 $\w_{0/1} = A, \gamma_{0/1}  = K_0K_1$ and we have shown that $  A^2 = -K_0K_1$.
Thus $ f_{0/1} (\z) =x$, $(g_{p/q}  (\z))^2 -2 = (-x+2)-2 = -x$.
 
 $\w_{1/0} =B, \gamma_{1/0}  = \rm{id}$ and $B^2 = - \rm{id}$.
So $ f_{1/0} (\z) =2$, $(g_{1/0}  (\z))^2 -2 = -2$.
 
  $\w_{1/1} =AB, \gamma_{1/1}  = \rm{K_0K_1^{-1}}$.
So $ f_{1/1} (\z) =1-x$, $(g_{1/1}  (\z))^2 -2 = x-1$.

Now we do an inductive proof. 
Suppose that in the $\S$-tree  labels $u,v$ are adjacent along an edge $e$ with $w$ the remaining label at one of the two vertices at the ends of $e$. By the formula in Section~\ref{sec:traces}
 the  label at the other vertex is $2 - uv -w$.

 Suppose that the corresponding labels on the $\T$-tree  are $u',v',w'$. Then the remaining label at the vertex at the other end of $e$ is  $ u'v'-w'$. Replace these labels by the negatives of the traces of the doubled curves to get labels
 $2-u'^2, 2-v'^2, 2-w'^2 ,2- (u'v'-w')^2 $ around the same $4$ vertices. If we can show that 
 $$2 -(2-u'^2)(2-v'^2) - (2-w'^2 )  = -   (2- (u'v'-w')^2 )    $$ we will be done.
This is easily checked by multiplying out, noting that the trace identity~\eqref{conedspheretraces} round a vertex of the $\T$-tree gives
 $$ u'^2 +   v'^2   + w'^2 = u'v'w' + \Tr [A,B] +2 =  u'v'w' + 3.$$  \end{proof}

  \subsubsection{The actual computations}\label{sec:torustree1}

The above discussion justifies the method we actually used to perform  computations involving traces on $\S$. 
Instead of computing on the $\S$-tree with initial traces   $\Tr \g_{0/1} =  \Tr  K_0K_1 = x, \Tr \g_{1/0}= \Tr   \mbox{\rm{id}}= 2, \Tr \g_{1/1} = \Tr K_0K_1^{-1}  = 1-x$, we used the
 $\T$-tree  with initial triple
$\Tr A = \pm i(3/\z - \z),  \Tr B =0$ and $\Tr AB = \pm i(3/\z + \z)$ corresponding to the generators $A,B$ of $G_{\T}$.  As in Section~\ref{sec:solidtorus}
 Observe that $A^2 = -K_0K_1$, so that $\Tr A^2 = -x$. Since $\Tr B = 0$, we can find $\Tr AB$ from the  identity 
$(\Tr A)^2 +  (\Tr AB)^2 = \Tr {[A,B]} +2 = 3$. 
Thus setting $(a,b,c) = (\Tr A, \Tr B, \Tr AB)$ we have
$$
a^2 - 2 = -x, \quad c^2  =   1 + x.
$$
It is easily checked that this is in accord with~\eqref{eqn:torustraces}.
Thus associated to $G_{\T}(x)$  we have the torus tree
 $(a,0,c) = (\sqrt{-x+2}, 0, \sqrt{x+1})$. This is the method we actually used to compute the 
 pleating rays shown in Figure~\ref{Diagonal-and-Riley-mu-3-Ray-BQ}.

\begin{remark} \label{rmk:signs} \rm{The sign of the square roots in the above can be  uniquely determined by the formulae for traces in terms of $\xi$. What we actually did was to make an arbitrary choice and plot rays corresponding to curves in the range $0 \leq p/q \leq 1$, thus making a picture in the upper half plane which we could then reflect. As can be seen from Figure~\ref{fig:trace2}, the  signs of the square roots in fact  alternate periodically with period $4$ rather than period $2$, so that, for example, $\Tr \g_{3} = -\Tr \g_{1}$.}
\end{remark}

\subsubsection{Computations for the Riley slice} \label{sec:discussion}  The traces needed to find the pleating  rays for the Riley slice on the right  in Figure~\ref{Figs/Riley-Ray-BQ} were computed by a method similar to that described above.   Our parameter $x$ can be related to the parameter $\rho$ of~\cite{ksriley} 
 by comparing the traces of the word in Farey position $0/1$: these are $K_0K_1$ in our case and $XY$ in the notation of~\cite{ksriley}.   Thus we find   the correct correspondence is $ x \leftrightarrow \rho + 2 $. 
For the Riley group  a similar computation to the one above with $\Tr [A,B] = -2$ gives immediately  
 $(\Tr A)^2 = -(\rho + 2)$ and  $(\Tr AB)^2 =  \rho + 2 $. Thus writing in terms of the $x$-coordinate we find the initial triple $(\sqrt {-x}, 0, \sqrt {x})$.

\subsubsection{Comparison of Bowditch sets}  It is interesting to compare the Bowditch sets associated to  the two  initial triples $(x,x,x)$ and $(\sqrt{-x+2}, 0, \sqrt{x+1})$.  In the latter case, one needs to modify the definition of the Bowditch set: since $\phi(U)=0$ for some $U \in \Omega$, there is a trace preserving $\ZZ$-action on the associated tree $\TT_{(\sqrt{-x+2}, 0, \sqrt{x+1})}$ corresponding to the action of a subgroup of ${\rm Aut}(F_2)$ generated by a parabolic, see  for example \cite{tan_gd} Theorem 1.9.   The Bowditch condition should actually be specified  on $\Omega \setminus \{U\} / \sim$, where $\sim$ is   the equivalence coming from this symmetry.

The results, plotted in the $\z$-plane, are shown in Figure~\ref{fig:BQ-sets-comparison}. On the right  the initial triple is  $(x,x,x)$  (with $x$ related to $\xi$ as in \eqref{eqn:paramreln}) corresponding to the handlebody group $G_{\H}(x)$.
On the left,  the initial triple is $(\sqrt{-x+2}, 0, \sqrt{x+1})$  corresponding to the torus group $G_{\T}(x)$. 
The two regions are clearly distinct: the grey region on the right contains that on the left.
Conjecturally, the left hand grey region is also the discreteness locus for the groups $G_{\T}(x)$, see Figure \ref{Figs/Riley-Ray-BQ} for the parametrization in terms of $x$.

Note the various symmetries as discussed in Section~\ref{sec:symmetries}, in particular note how  Figure~\ref{Diagonal-BQ} loses the left-right reflectional symmetry seen in Figure~\ref{fig:BQ-sets-comparison}.
The coloured region in Figure~\ref{Diagonal-BQ} is the same region as the right frame of Figure~\ref{fig:BQ-sets-comparison},  drawn in the $x$-plane.

\begin{figure}[ht]\label{xxx}
\includegraphics[width=4cm]{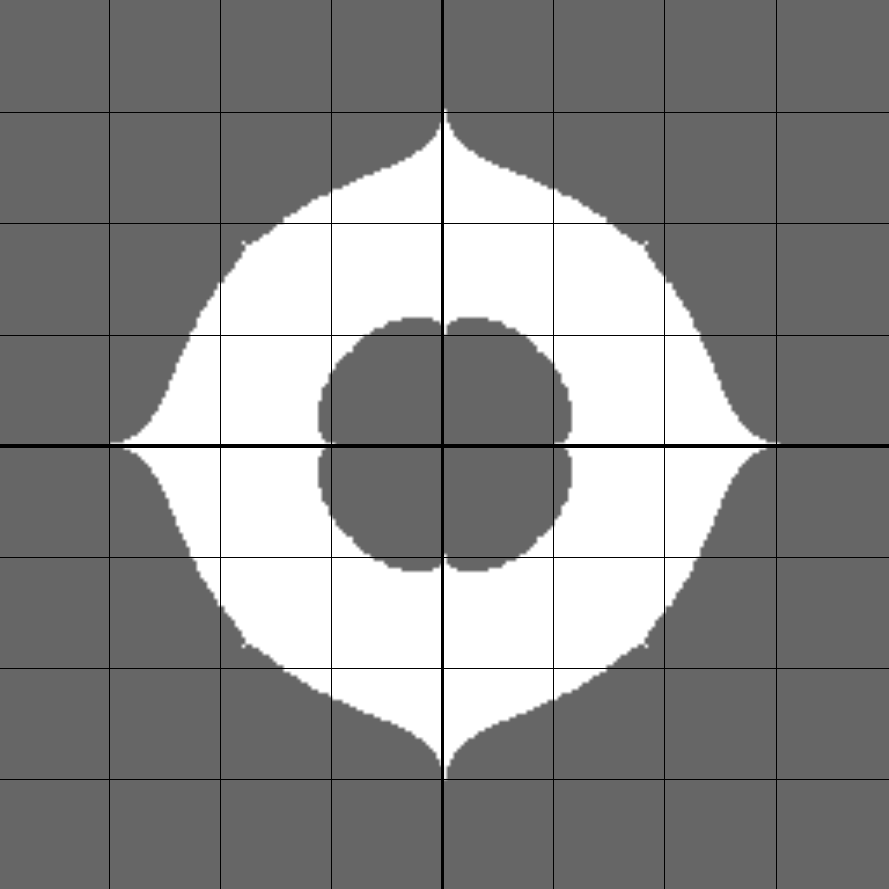}
\hspace{1cm}
\includegraphics[width=4cm]{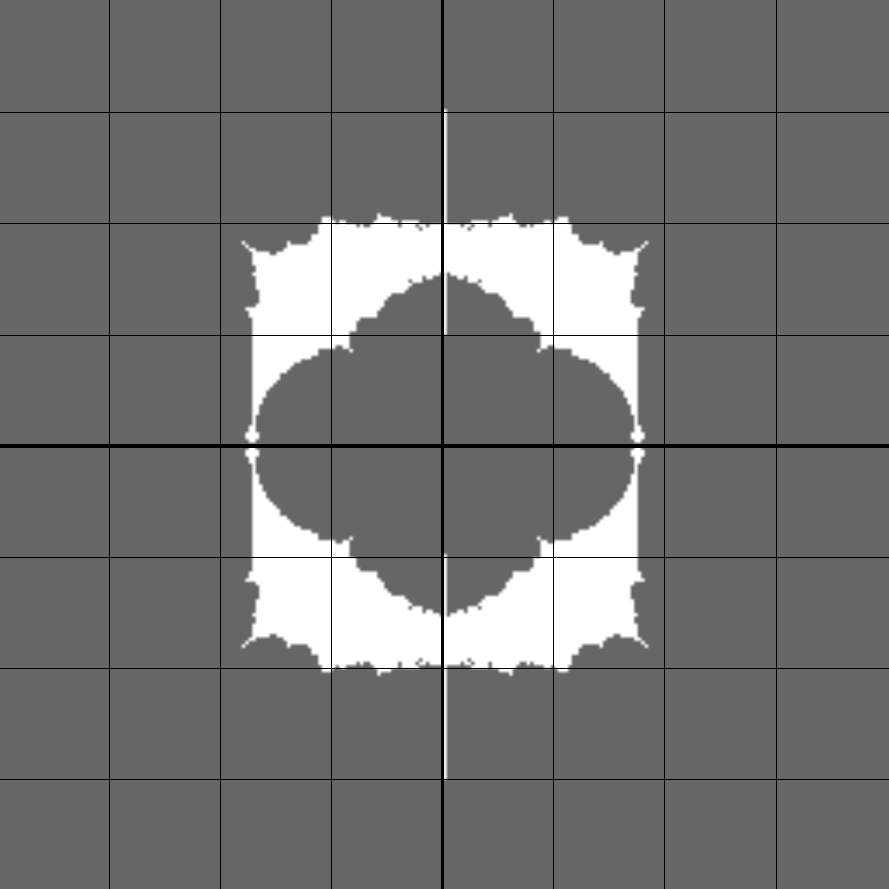}
\caption{Bowditch sets (grey) plotted in the $\z$-plane with range $[-4,4]\times[-4i,4i]$.  Left:  Initial triple  $(\sqrt{-x+2}, 0, \sqrt{x+1})$  
corresponding to the torus group $G_{\T}(x)$. Right: Initial triple  $(x,x,x)$ corresponding to the handlebody group $G_{\H}(x)$.
The two regions are clearly distinct: the grey region on the right contains that on the left. }  
\label{fig:BQ-sets-comparison}
\end{figure}

\bibliographystyle{alpha}


\end{document}